\newtheorem{Lemma}{Lemma}[section]
\newtheorem{Theorem}[Lemma]{Theorem}
\newtheorem{Proposition}[Lemma]{Proposition}
\newtheorem{Corollary}[Lemma]{Corollary}
\newtheorem{Remark}[Lemma]{Remark}
\newtheorem{Definition}[Lemma]{Definition}
\newtheorem{Notation}[Lemma]{Notation}
\newtheorem{Problem}[Lemma]{Inverse problem}
\newcommand{\removedEinstein}[1]{}
\newcommand{\extension}[1]{} 
\newcommand{\generalizations}[1]{}
\newcommand{\hiddenfootnote}[1]{}
\DeclareMathOperator{\Span}{span}
\DeclareMathOperator{\dist}{dist}
\renewcommand{\L}{{\mathcal L}} 
\newcommand{\p}{\partial}
\newcommand{\N}{{\mathbb N}}
\newcommand{\R}{{\mathbb R}}
\title[Boundary distance map on Finsler manifolds]{Determination of a compact Finsler manifold from its boundary distance map and an inverse problem in elasticity}
\date{\today}
\subjclass[2010]{86A22, 53Z05, 53C60}
\keywords{inverse problems, Finsler geometry, distance function, elastic waves}
\begin{document}
\author[M. V. de Hoop]{Maarten V.  de Hoop $^{\diamond}$}

\author[J. Ilmavirta]{Joonas Ilmavirta $^\dagger$}

\author[M. Lassas]{Matti Lassas $^\square$}

\author[T. Saksala]{Teemu Saksala $^{\diamond, \ddagger}$}



{
\let\thefootnote\relax\footnote{$^\diamond$ Department of Computational and Applied Mathematics, Rice University, USA}
\let\thefootnote\relax\footnote{$^\dagger$ Unit of Computing Sciences, Tampere University,  Finland}
\let\thefootnote\relax\footnote{$^\square$ Department of Mathematics and Statistics, University of Helsinki, Finland}
\let\thefootnote\relax\footnote{$^\ddagger$ Department of Mathematics, North Carolina State University, USA}
}


\email[de Hoop]{mdehoop@rice.edu}
\email[Ilmavirta]{joonas.ilmavirta@tuni.fi}
\email[Lassas]{matti.lassas@helsinki.fi}
\email[Saksala]{tssaksal@ncsu.edu}


\bigskip

\begin{abstract}
We prove that the boundary distance map of a smooth compact Finsler manifold with smooth boundary determines its topological and differentiable structures. We construct the optimal fiberwise open subset of its tangent bundle and show that the boundary distance map determines the Finsler function in this set but not in its exterior.  If the Finsler function is fiberwise real analytic, it is determined uniquely. We also discuss the smoothness of the distance function between interior and boundary points.

We recall how the fastest $qP$-polarized waves in anisotropic elastic medium are a given as solutions of the second order hyperbolic pseudodifferential equation $(\frac{\p^2}{\p t^2}-\lambda^1(x,D))u(t,x)=h(t,x)$ on $\R^{1+3}$, where $\sqrt{\lambda^1}$ is the Legendre transform of a fiberwise real analytic Finsler function $F$ on $\R^3$. If $M \subset \R^3$ is a $F$-convex smooth bounded domain we say that a travel time of $u$ to $z \in \p M$ is the first time $t>0$ when the wavefront set of $u$ arrives in $(t,z)$. The aforementioned geometric result can then be utilized to determine the isometry class of $(\overline M,F)$ if we have measured a large amount of travel times of $qP$-polarized waves, issued from a dense set of unknown interior point sources on $M$. 
\end{abstract}

\maketitle


\section{Introduction}
\label{Se:inverse_problem}

This paper is devoted to an inverse problem for smooth compact Finsler
manifolds with smooth boundaries. We prove that the boundary distance
map of such a manifold determines its topological and differentiable
structures. In general, the boundary distance map is not sufficient to
determine the Finsler function in those directions which correspond to
geodesics that are either trapped or are not distance minimizers to
terminal boundary points. To prove our result, we embed a Finsler
manifold with boundary into a function space and use smooth boundary
distance functions to give a coordinate structure and the Finsler
function where possible.

This geometric problem arises from the propagation of singularities
from a point source for the elastic wave equation. The point source can be natural (e.g. an earthquake as a source of seismic waves) or
artificial (e.g. produced by focusing of waves or by a wave
sent in scattering from a point scatterer). Due to polarization effects, there are
singularities propagating at various speeds. We study the first
arrivals and thus restrict our attention to the fastest singularities
(corresponding to so-called \textit{qP}-polarization, informally
``primary waves''). They follow the geodesic flow of a Finsler
manifold, as we shall explain in more detail in
section~\ref{Se:elastic}. 
The goal of this paper is to study a geophysical inverse problem in the framework of Finsler geometry and then bring the conclusion back to the elastic model. 

An elastic body --- e.g. a planet --- can be modeled as a manifold,
where distance is measured in travel time: The distance between two
points is the shortest time it takes for a wave to go from one point
to the other. If the material is elliptically anisotropic, then this
elastic geometry is Riemannian. However, this sets a very stringent
assumption on the stiffness tensor describing the elastic system, and
Riemannian geometry is therefore insufficient to describe the
propagation of seismic waves in the Earth. We make no structural
assumptions on the stiffness tensor apart from the physically
necessary symmetry and positivity properties, and this leads
necessarily to Finsler geometry.

The inverse problem introduced above can be rephrased as the following
problem in geophysics.
Imagine that earthquakes occur at known times
but unknown locations within Earth's interior and arrival times are
measured everywhere on the surface.
Are such travel time measurements
sufficient to determine the possibly anisotropic elastic wave speed
everywhere in the interior and pinpoint the locations of the
earthquakes?
While earthquake times are not  known in practice, this
is a fundamental mathematical problem that underlies more elaborate
geophysical scenarios.
In the Riemannian realm the corresponding
result \cite{Katchalov2001, kurylev1997multidimensional} is a crucial
stepping stone towards the results of
\cite{belishev1992reconstruction, deHoop1, de2018inverse,
ivanov2018distance, kurylev2010rigidity, LaSa}. We expect that
solutions to inverse problems for the fully anisotropic elastic wave
equation rely on geometrical results similar to the ones presented in
this paper.
However, as Finsler geometry is substantially more complicated than Riemannian --- especially concerning the structure of unit spheres --- the present result is not a mere straightfoward generalization of the corresponding Riemannian result nor do we expect follow-up results to be so.

\subsection{Main results}

We let $(M,F)$ be a smooth compact, connected Finsler manifold with
smooth boundary $\p M$ (For the basic of theory of Finsler manifolds see the appendix (Section~\ref{Se:Appendix1}) at the end of this paper.). We denote the tangent bundle of $M$ by $TM$ and use the notation $(x,y)$ for points in $TM$, where $x$ is a base point and $y$ is a vector in the fiber $T_xM$. The notations $T^\ast M$ and $(x,p)$ are reserved for the cotangent bundle and its points respectively. 

\medskip

We write $d_F:M\times M \to \R$ for the non-symmetric distance function given by a Finsler function $F$.
For a given $x\in M$ the boundary distance function related to $x$ is
\begin{equation}
r_x\colon\p M \to [0,\infty), \quad r_{x}(z)=d_F(x,z).
\end{equation}
To give the mapping $x\mapsto r_x$ a name, we denote $r_x=\mathcal R(x)$.
We denote by $\mathcal{R}(M^{int})=\{r_x:x\in M^{int}\}$ the collection
of all boundary distance functions with interior source points.
inverse problem with \textit{boundary distance data}
\begin{equation}
\label{eq:data}
(\mathcal{R}(M^{int}), \p M).
\end{equation}
\begin{Problem}
Do the boundary distance data~\eqref{eq:data} determine $(M,F)$ up to isometry?
\end{Problem}

We emphasize that we do not assume $d_F$ to be
symmetric and therefore data~\eqref{eq:data} contain only
information where the distance is measured from the points of
$M^{int}$ to points in $\p M$. We note that for any $x \in M^{int}$,
\begin{equation}
r_x(z)=d_{\stackrel{\leftarrow}{F}}(z,x), \quad z \in \p M,
\end{equation}
where $\stackrel{\leftarrow}{F}$ is the Finsler function 
\begin{equation}
\label{eq:reversed_Fins}
\stackrel{\leftarrow}{F}(x,y):=F(x,-y). 
\end{equation}
Therefore, data~\eqref{eq:data} are equivalent to the data
\begin{equation}
(\{d_{\stackrel{\leftarrow}{F}}(\cdot,x)\colon \p M \to \R \: |\: x \in M^{int}\},\p M),
\end{equation}
where the distance is measured from the boundary to the interior. In \cite{Katchalov2001, kurylev1997multidimensional} it is shown that the data~\eqref{eq:data} determine a Riemannian manifold $(M,g)$ up to isometry. In the Finsler case this is not generally true. Next we explain what can be obtained from the Finslerian boundary distance data~\eqref{eq:data}.

\color{black}
\begin{Notation}
\label{De:good_set}
Let $(M,F)$ be a Finsler manifold with boundary.
For any $(x,y)\in TM\setminus 0$, let $\gamma_{x,y}$ denote the geodesic starting at $x$ in the direction of $y$.
We denote by $t(x,y)\in[0,\infty]$ be the first time the geodesic $\gamma_{x,y}$ meets the boundary, and we denote this boundary point by $z(x,y)=\gamma_{x,y}(t(x,y))$.
We denote by $G(M,F)$ the set of points $(x,y)\in TM\setminus0, \: x \in M^{int}$ for which $t(x,y)<\infty$ and the geodesic $\gamma_{x,y}$ is minimizing between the initial point $x$ and the final point $z(x,y)$ on the boundary.
\end{Notation}

Under this convention we have that $t(x,y)=0$ when $x\in\partial M$ and $t(x,y)>0$ whenever $x\in M^{int}$.
Our definition of the exit time $t(x,y)$ guarantees that $\gamma_{x,y}(0,t(x,y))\subset M^{int}$.

Since for any $(x,y) \in TM\setminus \{0\}$ and $a >0$ it holds that $\gamma_{x,ay}(t)=\gamma_{x,y}(at)$, we notice that $G(M,F)$ is a conic set. Let $(x,y) \in G(M,F)$, then $t(x,ay)=a^{-1}t(x,y)$ and $z(x,y)=z(x,ay)$ for any $a>0$. Moreover if $F(y)=1$, then $t(x,y)=d_F(x,z(x,y))$. 

We show that the data~\eqref{eq:data} determine the Finsler function in the closure of the set $G(M,F)$ and that the data~\eqref{eq:data} are not sufficient to recover the Finsler function $F$ on $TM^{int} \setminus \overline{G(M,F)}$. The reason is that  the data~\eqref{eq:data} do not provide any information about the geodesics that are trapped in $M^{int}$ or do not minimize the distance between the point of origin and the terminal boundary point. Therefore, to recover the Finsler function $F$ globally we assume that for every $x \in M$ the function $F(x, \cdot)\colon T_x M \setminus\{0\} \to \R$ is real analytic. We call such a Finsler function \textit{fiberwise real analytic}. For instance Finsler functions $F(x,y)=\sqrt{g_x(y,y)}$, where $g$ is a Riemannian metric, and Randers metrics are fiberwise real analytic. In Section~\ref{Se:elastic} we show that also the Finsler metric related to the fastest polarization of elastic waves is fiberwise real analytic.

\medskip
Now we formulate our main theorems. If $(M_i,F_i), \: i \in \{1,2\}$ are smooth, connected, compact Finsler manifolds with smooth boundaries, we call a smooth map 
\\
$\Phi\colon(M_1,F_1)\to(M_2,F_2)$ a \textit{Finslerian isomorphism} if it is a diffeomorphism which satisfies
\begin{equation}
F_1(x,y)=F_2(\Phi(x),\Phi_\ast y), \quad (x,y) \in TM_1.
\end{equation}
Here $\Phi_\ast$ is the pushforward by $\Phi$. We say that the boundary distance data of manifolds $(M_i,F_i), \: i=1,2$ agree, if there exists a diffeomorphism $\phi\colon \p M_1\to \p M_2$ such that 
\begin{equation}
\label{eq:BDD_agree_start}
\{r_{x_1}:x_1\in M_1^{int}\}=\{r_{x_2}\circ \phi:x_2\in M_2^{int}\}\subset C(\p M_1).
\end{equation}
We emphasize that this is an equality of
non-indexed sets and we do not know the point $x_1$ corresponding to
the function $r_{x_1}$. 

Our first main result shows that the boundary distance data~\eqref{eq:data} determine a manifold upto  a diffeomorphism and a Finsler function in an optimal set.


\begin{Theorem}
\label{Th:smooth}
Let $(M_i,F_i), \: i=1,2$ be smooth, connected, compact Finsler
manifolds with smooth boundaries.
Suppose that there is a
diffeomorphism 
$\phi\colon\p M_1\to \p M_2$ so that
\eqref{eq:BDD_agree_start} holds. Then there is a diffeomorphism
$\Psi\colon M_1\to M_2$ so that $\Psi|_{\p M_1}=\phi$.
The sets $\overline{ G(M_1,F_1})$ and $\overline{ G(M_1,\Psi^*F_2)}$ coincide
and in this set $F_1=\Psi^*F_2$, where the pullback $\Psi^*F_2:TM_1\to \R$ is
defined by
\begin{equation}
\Psi^*F_2(x,y)=F_2(\Psi(x),\Psi_\ast y).
\end{equation}
Moreover, for any $(x_0,y_0)\in TM_1^{int}\setminus \overline{G(M_1,F_1)}$ there
exists a smooth Finsler function $F_3\colon TM_1\to[0,\infty)$ so that
$d_{F_1}(x,z)=d_{F_3}(x,z)$ for all $x\in M_1$ and $z\in\partial M_1$ but
$F_1(x_0,y_0)\neq F_3(x_0,y_0)$.
\end{Theorem}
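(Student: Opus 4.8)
The plan is to prove the three assertions in turn: the existence of the diffeomorphism $\Psi$, the identification of $F_1$ with $\Psi^*F_2$ on the closure of the good set, and finally the non-uniqueness outside it. The backbone of the argument is the classical idea of realizing the manifold inside a function space via boundary distance functions, but now carried out Finsler-theoretically and keeping careful track of the asymmetry of $d_F$.

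\emph{Step 1: the embedding and the diffeomorphism $\Psi$.} I would consider the map $\mathcal R\colon M_1\to C(\partial M_1)$, $x\mapsto r_x$, and its counterpart $\mathcal R'\colon M_2\to C(\partial M_2)$ followed by the pullback by $\phi$. First one checks $\mathcal R$ is continuous and injective: injectivity on $M_1^{int}$ follows because for $x\neq x'$ there is a (reversed-Finsler) minimizing geodesic from $\partial M_1$ through one point but not with the same length to the other, so $r_x\neq r_{x'}$; for boundary points one uses that $r_x(z)=0$ iff $z=x$ when $x\in\partial M_1$, and a short argument for mixed cases. Then $\mathcal R$ is a topological embedding by compactness. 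The hypothesis \eqref{eq:BDD_agree_start} says precisely that $\mathcal R(M_1^{int})$ and $(\phi^*\mathcal R')(M_2^{int})$ are the same subset of $C(\partial M_1)$; taking closures and using continuity of $\phi^*$ gives $\mathcal R(M_1)=(\phi^*\mathcal R')(M_2)$, so $\Psi:=(\phi^*\mathcal R')^{-1}\circ\mathcal R\colon M_1\to M_2$ is a homeomorphism with $\Psi|_{\partial M_1}=\phi$ (the latter because for $z\in\partial M_1$, $r_z$ is the distance-to-$z$ function, whose zero locus pins down $z$, and $\phi$ respects this). To upgrade $\Psi$ to a diffeomorphism one must produce smooth coordinates near each interior point out of finitely many boundary distance functions $r_x=r_{x,\cdot}$; here I would invoke the smoothness-of-distance discussion promised in the abstract — namely that for a fixed boundary point $z$ in a suitable ``good'' direction the function $x\mapsto d_F(x,z)$ is smooth near $x$ with nonvanishing gradient, and that one can choose $n=\dim M$ such points whose gradients are linearly independent, yielding a smooth chart determined intrinsically by the data. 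The differentiable structure on $M_2$ transported through $\phi^*\mathcal R'$ matches, so $\Psi$ is a diffeomorphism.

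\emph{Step 2: recovering $F_1$ on $\overline{G(M_1,F_1)}$.} Replacing $(M_2,F_2)$ by $(M_1,\Psi^*F_2)$ we may assume $M_1=M_2=:M$, $\phi=\mathrm{id}$, and the two manifolds share the underlying smooth manifold with $\mathcal R(M^{int})=\mathcal R'(M^{int})$; we must show $F_1=F_2$ on $\overline{G(M,F_1)}$ and that this set equals $\overline{G(M,F_2)}$. The core point: for $(x,y)\in G(M,F_1)$ with $F_1(x,y)=1$, the reversed geodesic $\gamma_{x,y}$ run backward hits $\partial M$ at $z=z(x,y)$ and is $\overleftarrow{F_1}$-minimizing, so the function $r_x$ attains, at $z$, the value $t(x,y)=d_{F_1}(x,z)$, and moving $x$ slightly along $\gamma_{x,y}$ changes $r_x$ linearly with unit rate; this lets one reconstruct, purely from the family $\{r_x\}$ near $x$, both the direction $y/F_1(y)$ (as the distinguished direction in which nearby distance functions to $z$ decrease maximally) and the value $F_1(x,y)$ by homogeneity. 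Since $\{r_x\}=\{r'_x\}$ as sets and $\Psi=\mathrm{id}$, the same computation performed with $F_2$ gives the identical direction and value, whence $F_1(x,y)=F_2(x,y)$ and the reversed-geodesic / minimization conditions defining $G$ coincide for the two Finsler functions on this set. Thus $G(M,F_1)=G(M,F_2)$ on the open conic set, and $F_1=F_2$ there; by continuity of Finsler functions the equality and the equality of closures persist on $\overline{G(M,F_1)}=\overline{G(M,F_2)}$.

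\emph{Step 3: non-uniqueness outside $\overline{G(M_1,F_1)}$.} Fix $(x_0,y_0)\in TM_1^{int}\setminus\overline{G(M_1,F_1)}$. By definition of $G$ and since its complement in $TM^{int}\setminus 0$ is open, there is a conic neighborhood $\mathcal U$ of $(x_0,y_0)$, with $\overline{\mathcal U}\subset TM^{int}\setminus\overline{G(M_1,F_1)}$, every direction of which is either trapped or non-minimizing to its terminal boundary point. I would construct $F_3$ by perturbing $F_1$ inside $\mathcal U$: choose a smooth cutoff supported in $\mathcal U$ and set $F_3=F_1+\varepsilon\,\chi\,\psi$ where $\psi$ is a fiberwise $1$-homogeneous function, taking $\varepsilon$ small enough that $F_3$ remains a genuine Finsler function (positivity and strong convexity of the fiberwise unit balls are open conditions). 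It remains to check $d_{F_3}(x,z)=d_{F_1}(x,z)$ for all $x\in M$, $z\in\partial M$. One inequality is easy if $F_3\ge F_1$ (choose $\psi\ge 0$ or control the sign of the perturbation so that $F_3\ge F_1$ pointwise), giving $d_{F_3}\ge d_{F_1}$. For the reverse, any $F_1$-minimizing geodesic from $x$ to $z\in\partial M$ lies, by definition, in $G(M,F_1)$ once it enters $M^{int}$, hence stays outside $\mathcal U$ where $F_3=F_1$, so its $F_3$-length equals its $F_1$-length $=d_{F_1}(x,z)\ge d_{F_3}(x,z)$. Hence equality. Finally $F_1\neq F_3$ because the perturbation is nonzero near $(x_0,y_0)$.

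\emph{Main obstacle.} The genuinely hard part is Step 2 together with the smoothness input needed in Step 1: establishing that from the \emph{unindexed} set of boundary distance functions one can intrinsically identify which functions are ``$r_x$ for $x$ near a given interior point,'' extract smooth local coordinates, and read off the Finsler norm and the defining conditions of $G$ in each good direction. This requires the delicate analysis of where $x\mapsto d_F(x,z)$ is smooth and has a well-defined gradient of unit $F^*$-length — exactly the ``smoothness of the distance function between interior and boundary points'' flagged in the abstract — and care with the non-symmetry of $d_F$, since all minimization happens for $\overleftarrow{F}$ rather than $F$. The non-uniqueness in Step 3 is comparatively routine once the structure of $G$ is in hand.
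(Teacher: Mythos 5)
Your overall architecture coincides with the paper's: embed via $\mathcal R\colon x\mapsto r_x$ into $C(\partial M)$, build $\Psi$ as a homeomorphism, upgrade it to a diffeomorphism using distance functions as coordinates, recover $F$ on the good set, and kill uniqueness outside it by a perturbation supported in a conic set disjoint from $\overline{G(M_1,F_1)}$ (your additive $F_1+\varepsilon\chi\psi$ is equivalent to the paper's multiplicative $(1+s\alpha(y/F(y)))F(y)$, and your argument that minimizers to the boundary avoid the support is exactly the paper's). However, Step 2 has two genuine gaps. First, your mechanism for reading off the direction $y/F_1(y)$ --- ``the distinguished direction in which nearby distance functions to $z$ decrease maximally'' --- is circular: ``maximal decrease'' presupposes a norm on $T_xM$, which is what you are trying to determine. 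What the data actually give you are the \emph{differentials} $d(d_{\overleftarrow{F}}(z,\cdot))|_x\in T_x^*M$, i.e.\ covectors, and in the Finsler setting the passage from these to velocity vectors goes through the nonlinear Legendre transform $\ell_x$, which itself depends on $F$. The paper resolves this on the cotangent side: the observed differentials all have $F^*$-norm one and fill an open subset of the unit co-sphere, so they determine $F^*$ on an open cone; then $\ell_x^{-1}$ is recovered by differentiating $(F^*)^2$ (equation \eqref{eq:recover_of_Legendre}), and only then does one obtain $T_xM\cap(-\widehat G)$ and $F$ there. Without this (or an equivalent) step your recovery of both the direction and the value of $F_1$ does not go through.

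Second, you implicitly treat every $(x,y)\in G(M,F_1)$ as a point where $d_{F_1}(\cdot,z(x,y))$ is differentiable at $x$ with the expected differential. That is false in general: membership in $G$ only says the geodesic is minimizing to the boundary; the distance function can fail to be smooth at $x$ (multiple minimizers, focal points of $\partial M$, tangential exit). The paper therefore introduces the subset $\widehat G(M,F)$ of directions where the distance function is genuinely $C^\infty$, recovers $F$ there, and then proves the nontrivial density statement $\overline{\widehat G(M,F)}=\overline{G(M,F)}$ (Proposition~\ref{Le:closure_of_Gs}), whose proof occupies Lemmas~\ref{Le:corner_in_the_curve_2}--\ref{Le:final_condition} and all of Section~\ref{Se:Appendix2} on transverse Jacobi fields and focal points. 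This is the main technical content of the theorem and is absent from your proposal; relatedly, your Step 1 also needs a separate argument (boundary normal coordinates via $\exp^\perp$) to show $\Psi$ is a diffeomorphism near $\partial M_1$, since the interior coordinate construction does not apply there.
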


\begin{Remark} The set $G(M,F)$ can be large or small as the following examples illustrate. If every geodesic of $(M,F)$ is minimizing, then it holds that $\overline{
G(M,F)}=TM$. This holds for instance on simple Finsler manifolds, as by the definition for any pair of points there exists a unique distance minimizing geodesic that depends smoothly on these points. If~$M$ is any subset of $S^2$ larger than the hemisphere and if~$F$ is given by the round metric, then $TM^{int} \setminus \overline{G(M,F)}$ contains an open non-empty set~$U$ whose canonical projection to $M$ is an open neighborhood of the equator.
\end{Remark}



Our second main result shows that the boundary distance data~\eqref{eq:data} determine a fiberwise real analytic Finsler manifold up to an isometry. 

\begin{Theorem}
\label{Th:analytic}
Let $(M_i,F_i), \: i=1,2$ be smooth, connected, compact Finsler
manifolds with smooth boundary.
Suppose that there is a
diffeomorphism 
$\phi\colon\p M_1\to \p M_2$ such that
\eqref{eq:BDD_agree_start} holds.
If Finsler functions $F_i$ are fiberwise
real analytic, then there exists a Finslerian isometry $\Psi \colon
(M_1,F_1)\to (M_2,F_2)$ so that $\Psi|_{\p M_1}=\phi$.
\end{Theorem}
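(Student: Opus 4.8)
The plan is to combine Theorem~\ref{Th:smooth} with a fiberwise real-analytic continuation argument, the new geometric ingredient being that the Finsler function has already been recovered, at each interior point, in an \emph{open} cone of directions.

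First apply Theorem~\ref{Th:smooth}: there is a diffeomorphism $\Psi\colon M_1\to M_2$ with $\Psi|_{\p M_1}=\phi$ such that $\overline{G(M_1,F_1)}=\overline{G(M_1,\Psi^*F_2)}$ and $F_1=\Psi^*F_2$ on this common closed set. On each fiber the pullback $\Psi^*F_2(x,y)=F_2(\Psi(x),\Psi_\ast y)$ is the composition of the linear map $\Psi_\ast\colon T_xM_1\to T_{\Psi(x)}M_2$ with the real-analytic function $F_2(\Psi(x),\cdot)$, hence is again fiberwise real analytic; write $\tilde F:=\Psi^*F_2$. It therefore suffices to show that two fiberwise real-analytic Finsler functions $F_1,\tilde F$ on $TM_1$ that agree on $\overline{G(M_1,F_1)}$ must agree on all of $TM_1$: granting this, $\Psi$ satisfies $F_1(x,y)=F_2(\Psi(x),\Psi_\ast y)$ on $TM_1$, so it is a Finslerian isomorphism; being norm-preserving on fibers it preserves lengths of curves, hence $d_{F_1}$, and so is the required isometry with $\Psi|_{\p M_1}=\phi$.

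The key geometric point is that for each $x$ in a dense set $D\subset M_1^{int}$ the slice $G_x:=\{y\in T_xM_1\setminus\{0\}:(x,y)\in G(M_1,F_1)\}$ has non-empty interior in $T_xM_1\setminus\{0\}$. To see it, fix $x\in M_1^{int}$, pick $z_0\in\p M_1$ realizing $\rho:=d_{F_1}(x,\p M_1)>0$, and let $\gamma$ be a unit-speed $F_1$-geodesic from $x$ to $z_0$ of length $\rho$. If $\gamma(t_0)\in\p M_1$ for some $0<t_0<\rho$ then $d_{F_1}(x,\p M_1)\le t_0<\rho$, a contradiction, so $\gamma((0,\rho))\subset M_1^{int}$ and $(x,\dot\gamma(0))\in G(M_1,F_1)$. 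Since $z_0$ minimizes $d_{F_1}(x,\cdot)$ over $\p M_1$, the first variation of arc length forces the Legendre dual $p$ of $\dot\gamma(\rho)$ to annihilate $T_{z_0}\p M_1$; as $p(\dot\gamma(\rho))=F_1(x,\dot\gamma(\rho))=1\ne0$ by homogeneity, $\gamma$ meets $\p M_1$ transversally. When, in addition, $z_0$ is joined to $x$ by a unique minimizing geodesic and is non-conjugate to $x$ along it, a perturbation argument applies: the first-hitting time of $\p M_1$ is a smooth function of the direction near $\dot\gamma(0)$ (implicit function theorem at a transversal, non-grazing intersection), $\exp_x$ is a local diffeomorphism near $\rho\,\dot\gamma(0)$, and $d_{F_1}(x,z)\ge\rho$ for all $z\in\p M_1$; together these force every direction sufficiently close to $\dot\gamma(0)$ to reach $\p M_1$ transversally and minimally, i.e.\ to lie in $G_x$. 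The set $D$ of points whose nearest boundary point is of this good type is open, and it is dense since from an arbitrary $x_0$ one can slide along the nearest-point geodesic toward $\p M_1$ (and perturb slightly) to reach such a point, thereby avoiding the degenerate configurations where the nearest boundary point is conjugate to $x$ or is reached by several minimizers. Proving this lemma rigorously is the main obstacle; it rests on the structural analysis of $G(M,F)$ --- in particular the transversality of minimizing exits and the continuity of exit and cut times --- already carried out for Theorem~\ref{Th:smooth}.

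Finally fix $x\in D$. If $\dim M_1\ge2$ then $T_xM_1\setminus\{0\}$ is connected; $F_1(x,\cdot)$ and $\tilde F(x,\cdot)$ are real analytic there and coincide on $G_x$ (as $G(M_1,F_1)\subset\overline{G(M_1,F_1)}$, on which $F_1=\tilde F$), hence on a non-empty open set, so by the identity theorem they coincide on the whole punctured fiber, and trivially at the origin. For $\dim M_1=1$ the same holds on each of the two open rays, both of which meet $G_x$. Thus $F_1(x,\cdot)=\tilde F(x,\cdot)$ on $T_xM_1$ for every $x\in D$; since $D$ is dense in $M_1^{int}$ and $F_1,\tilde F$ are continuous on $TM_1$ (smooth off the zero section, vanishing on it), a limiting argument gives $F_1=\tilde F$ on $TM_1$. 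By the first paragraph, $\Psi$ is a Finslerian isometry with $\Psi|_{\p M_1}=\phi$.
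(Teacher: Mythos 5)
Your proposal is correct and takes essentially the same route as the paper, whose proof of Theorem~\ref{Th:analytic} is exactly Theorem~\ref{Th:smooth} followed by fiberwise real-analytic continuation from an open cone of recovered directions in each fiber. The step you flag as the main obstacle --- that $T_xM\cap G(M,F)$ contains a non-empty open cone --- is already established in the paper for \emph{every} interior point $x$ (the first lemma of the Finsler-structure subsection, resting on Lemma~\ref{Le:boundary_vs_normal_cut_dist}), so your restriction to a dense set $D$ and the concluding limiting argument are not needed.
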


In Section~\ref{Se:elastic} we consider anisotropic elastic wave equation on $\R^{1+3}$. We show that under physically necessary symmetry and positivity properties on the stiffness tensor the travel times of the fastest polarized $qP$-waves are given by a distance function of a fiberwise real analytic Finsler function $F$. Therefore Theorem~\ref{Th:analytic} can be used to recover the isometry class $(\overline M,F)$ from the travel times of these waves. Here $M \subset \R^3$ is an open bounded subset of $\R^3$ that has a smooth boundary. See Theorem~\ref{Th:elastic} and the discussion preceding it for the rigorous claims.   

\begin{Remark}
In Theorems~\ref{Th:smooth} and~\ref{Th:analytic} we measure distances
from the interior to the boundary. If we measure in the opposite
direction, from boundary to the interior, this corresponds to the data
\eqref{eq:data} given with respect to the reversed Finsler function
$\stackrel{\leftarrow}{F}(x,y)$. Our results give uniqueness for
$\stackrel{\leftarrow}{F}$ and therefore $F$. That is, our main
results hold no matter which way distances are measured.
\end{Remark}


\subsubsection{Outline of the proofs of the main results}

Theorem~\ref{Th:analytic} essentially follows from Theorem
\ref{Th:smooth}. We split the proof of Theorem~\ref{Th:smooth} into
four parts (subsections 3.1--3.4). In the first part, we show that the
data~\eqref{eq:data} determine $r_x$ for any $x \in M$. Then we study
the properties of the map $ \mathcal{R}:M \ni x \mapsto r_x \in
L^\infty(\p M)$ and show that this map is a topological embedding. We
use the map $\mathcal{R}$ to construct a map $\Psi: (M_1,F_1) \to
(M_2,F_2)$ that will be shown to be a homeomorphism. In the second
part, we show that the map $\Psi$ is a diffeomorphism.  In the third part we connect the set $G(M,F)$ to smoothness of the distance functions of the form $d_F(\cdot,z), \: z \in \p M$. In the final section we use this to prove that the map $\Psi$ is a Finslerian
isometry in the optimal set $G(M,F)$. 

We have included in this paper a supplemental Section~\ref{Se:Appendix2} and the appendix  (Section~\ref{Se:Appendix1}), which contain necessary material for the
proof of Theorem~\ref{Th:smooth}. We have also included some well-known results and
properties in the Riemannian case while providing a detailed
background of compact Finsler manifolds with and without boundary for
the proof given in Section~\ref{Se:proof}. To the best of our
knowledge, most of this material cannot be found in the literature.

\subsection{Background and related work}

\subsubsection{Geometric inverse problems}

The problem of determination of the isometry type of a Riemannian manifold from its boundary distance map has been studied \cite{Katchalov2001,  kurylev1997multidimensional}. The construction of the topology of the manifold was introduced in~\cite{kurylev1997multidimensional} and the reconstruction of the smooth atlas on the manifold and the metric tensor in these coordinates was considered in~\cite{Katchalov2001}. We emphasize that there results were heavily based on Riemannian geometry.
The problem of determining a  Riemannian manifold  from its boundary distance map
is related to many other
geometric inverse problems. For instance, it is a crucial step in
proving uniqueness for Gel'fand's inverse boundary spectral problem
\cite{Katchalov2001}. Gel'fand's problem concerns the question whether
the data
\begin{equation}
\{\p M, (\lambda_j, \p_\nu \phi_j|_{\p M})_{j=1}^\infty\}
\end{equation}
determine $(M,g)$ up to isometry. Above $(\lambda_j, \phi_j)$ are the Dirichlet eigenvalues and the corresponding $L^2$-orthonormal eigenfunctions of the Laplace-Beltrami operator.  Belishev and Kurylev provide an
affirmative answer to this problem in
\cite{belishev1992reconstruction}.

We recall that the Riemannian wave operator is a globally hyperbolic
linear partial differential operator of real principal
type. Therefore, the Riemannian distance function and the propagation
of a singularity initiated by a point source in space time are related
to one another. In other words, $r_x(z)=t(z)-s$, where $t(z)$ is the
time when the singularity initiated by the point source $(s,x) \in
(0,\infty )\times M$ hits $z\in \p M$. If the initial time $s$ is
unknown, but the arrival times $t(z), z\in \p M$ are known, then one
obtains a boundary distance difference function
$D_x(z_1,z_2):=r_x(z_1)-r_x(z_2), \: z_1,z_2 \in \p M$. In~\cite{LaSa}
it is shown that if $U\subset N$ is a compact subset of a closed
Riemannian manifold $(N,g)$ and $U^{int}\neq \emptyset$, then
\textit{distance difference data} $\{(U,g|_{U}), \{D_x\colon U\times U
\to \R \:| \:x \in N\}\}$ determine $(N,g)$ up to isometry. This
result was recently generalized to complete Riemannian manifolds
\cite{ivanov2018distance} and for compact Riemannian manifolds with
boundary~\cite{de2018inverse}.

If the sign in the definition of the distance difference functions is
changed, we arrive at the distance sum functions,
\begin{equation}
\label{eq:dist sum}
D^+_x(z_1,z_2)=d(z_1,x)+d(z_2,x),\quad x\in M,\ z_1,z_2\in \p M .
\end{equation}
These functions give the lengths of the broken geodesics, that is, the
union of the shortest geodesics connecting $z_1$ to $x$ and the
shortest geodesics connecting $x$ to $z_2$. Also, the gradients of
$D^+_x(z_1,z_2)$ with respect to $z_1$ and $z_2$ give the velocity
vectors of these geodesics. The inverse problem of determining the
manifold $(M,g)$ from the \textit{broken geodesic data}, consisting of
the initial and the final points and directions, and the total length,
of the broken geodesics, has been considered in
\cite{kurylev2010rigidity}. In~\cite{kurylev2010rigidity} the authors
show that broken geodesic data determine the boundary distance data
and use then the results of \cite{Katchalov2001, kurylev1997multidimensional} to prove that the broken geodesic data determine the Riemannian manifold up to an isometry. In~\cite{de2020foliated} we utilized Theorem~\ref{Th:smooth} of the current paper and generalized the result of~\cite{kurylev2010rigidity} on reversible Finsler manifolds $F(x,y)=F(x,-y)$, satisfying a convex foliation condition. 

We let $u$ be the solution of the Riemannian wave equation with a point
source at $(s,x) \in (0,\infty )\times M$. In
\cite{duistermaat1996fourier, greenleaf1993recovering} it is shown
that the image, $\Lambda$, of the wavefront set of $u$, under the
canonical isomorphism $T^\ast M \ni (x,p) \mapsto g^{ij}(x)p_i \in TM$, coincides
with the image of the unit sphere $S_xM$ at $x$ under the geodesic
flow of $g$. Thus $\Lambda \cap \p(S M)$, where $SM$ is the unit sphere bundle of $(M,g)$, coincides with the exit
directions of geodesics emitted from $p$. In
\cite{lassas2018reconstruction} the authors show that if $(M,g)$ is a
compact smooth non-trapping Riemannian manifold with smooth strictly
convex boundary, then generically the \textit{scattering data of point
  sources} $\{\p M, R_{\p M}(M)\}$ determine $(M,g)$ up to
isometry. Here, $R_{\p M}(x) \in R_{\p M}(M), \: x \in M$ stands for
the collection of tangential components to boundary of exit directions
of geodesics from~$x$ to~$\p M$.

A classical geometric inverse problem, that is closely related to the distance functions, asks: Does the Dirichlet-to-Neumann mapping of a Riemannian wave operator determine a Riemannian manifold up to isometry? For the full boundary data this problem was solved originally in~\cite{belishev1992reconstruction} using the Boundary control method. Partial boundary data questions have been studied for instance in \cite{lassas2014inverse, milne2016codomain}. Recently~\cite{kurylev2018inverse} extended these results for connection Laplacians. Lately also inverse problems related to non-linear hyperbolic equations have been studied extensively  \cite{kurylev2014inverse, lassas2018inverse, wang2016inverse}. For a review of inverse boundary value problems for partial differential equations see \cite{LassasICM2018, uhlmann1998inverse}.

Another well studied geometric inverse problem formulated with the distance functions is the Boundary rigidity problem. This problem asks:  Does the \textit{boundary distance function} $d_F\colon\p M \times \p M \to \R$, that gives a distance between any two boundary points, determine $(M,F)$ up to isometry? In an affirmative case $(M,F)$ is said to be boundary rigid. For a general Riemannian manifold the problem is false: Suppose the manifold contains a domain with very slow wave speed, such that all the geodesics starting and ending at the boundary avoid this domain. Then in this domain one can perturb the metric in such a way that the boundary distance function does not change. It was conjectured in~\cite{michel1981rigidite} that for all compact simple Riemannian manifolds the answer is affirmative. In two dimensions it was solved in~\cite{pestov2005two}. For higher dimensional case the problem is still open, but different variations of it has been considered for instance in \cite{burago2010boundary, croke1991rigidity, stefanov2016boundary, stefanov2017local}. In contrast to a Riemannian case a simple Finsler manifold is never boundary rigid~\cite{ivanov2013local}. The Boundary distance data~\eqref{eq:data}, studied in this paper, are much more data than the knowledge of the boundary distance function. Therefore we can obtain the optimal determination of $(M,F)$, as explained in theorems~\ref{Th:smooth} and~\ref{Th:analytic}, even though we pose no geometric conditions on $(M,F)$.



\subsubsection{Finsler and Riemannian geometry}
   
We refer to the monographs \cite{ bao2012introduction, shen2001lectures} for the development of Finsler manifolds without
boundaries. We point out that two major differences occur between the
Riemannian and Finslerian realms that are related to the proof of
Theorem~\ref{Th:smooth}. In Riemannian geometry the relation between
$TM$ and $T^\ast M$ is simple; raising and lowering indices provides a
fiberwise linear isomorphism. In Finsler geometry this is not
possible, since the Legendre transform (see~\eqref{eq:Legendre}) is
not linear in the fibers. For this reason, we have to be more careful
in the analysis of distance functions and the connection of their
differential to the velocity fields of geodesics. Moreover the
Finslerian gradient is not a linear operator.

The second issue arises from the lack of a natural linear connection
compatible with $F$ on vector bundle $\pi\colon TM \to M,$ where $\pi(x,y)=x$ is the canonical projection to the base point. In Section
\ref{Se:Appendix2} we consider properties of Chern connection
$\nabla$, which is a torsion free linear connection on the pullback
bundle $\pi'\colon \pi^\ast TM \to TM$ (see \cite{bao1996notable, chern1943euclidean, chern1996finsler}). We derive natural
compatibility relations for $\nabla$ and the fundamental tensor field
$g$ on $\pi^\ast TM$ (see \cite[Section 5.2]{shen2001lectures} and
Lemma~\ref{Le:Chern_prop_2}) in a special case. In
\cite{shen1994connection} Shen proved the general version of the
compatibility relations. We use Lemma~\ref{Le:Chern_prop_2} to
formulate the initial conditions for so-called transverse vector
fields (see \cite[Section III.6]{Chavel}) with respect to $\p M$ along
boundary normal geodesics. After this we give a definition of an index
form related to these vector fields and use it to prove results
similar to classical theorems, originally by Jacobi, related to the
minizing of geodesics after focal points (for the Riemannian case, see
for instance \cite[Section III.6]{Chavel}).

Inverse problems arising from elastic equations have been also extensively studied. See e.g.  \cite{BAO2018263, bal2015reconstruction, bao2018inverse, griesmaier2018uncertainty, hu2015nearly, nakamura1993identification, nakamura1994global}. 

\section{From Elasticity to Finsler geometry}
\label{Se:elastic}

The main physical motivation of this paper is to obtain a geometric and coordinate invariant point of view to the inverse problems related to the propagation of seismic waves. The seismic waves are modelled by the anisotropic elastic wave equation in $\R^{1+3}$. This elastic system can be microlocally decoupled to 3 different polarizations~\cite{stolk2002microlocal}. In this section, we introduce a connection between the fastest polarization (known as the primary polarization and denoted by qP) and the Finsler geometry. Moreover it turns out that the Finsler metric arising from elasticity is fiberwise real analytic. We use the typical notation and terminology of the seismological literature, see for instance
\cite{cerveny2005seismic}. We let $c_{ijk\ell}(x)$ be the smooth
stiffness tensor on $\R^3$ which satisfies the symmetry
\begin{equation}
\label{eq:symmetry_of_elastic_tensor}
c_{ijk\ell}(x)=c_{jik\ell}(x)=c_{k\ell ij}(x), \quad x\in \R^3.
\end{equation}
We also assume that the density $\rho(x)$ is a smooth function of $x$ and define density--normalized elastic moduli
\begin{equation}
a_{ijk\ell}(x)=\frac{c_{ijk\ell}(x)}{\rho(x)}.
\end{equation}
The elastic wave operator $P$, related to $a_{ijk\ell}$, is given by
\begin{equation}
P_{i\ell}=\delta_{i\ell}\frac{\p^2}{\p t^2}-a_{ijk\ell}(x)\frac{\p}{\p x^j}\frac{\p}{\p x^k}+\hbox{lower order terms.}
\end{equation}

For every $(x,p)\in \R^3\times \R^3$ we define a square matrix $\Gamma(x,p)$, by
\begin{equation}
\label{eq:Chirstoffel_matrix}
\Gamma_{i\ell}(x,p):=
\sum_{j,k}a_{ijk\ell}(x)p_jp_k
.
\end{equation}
The matrix $\Gamma(x,p)$ is called the \textit{Christoffel matrix}. Due to~\eqref{eq:symmetry_of_elastic_tensor} the matrix $\Gamma(x,p)$ is symmetric. One also assumes that $\Gamma(x,p)$ is positive definite for every $(x,p)\in \R^3\times (\R^3 \setminus \{0\})$. 

The principal symbol $\delta(t,x,\omega, p)$  of the operator $P$ is then given by
\begin{equation}
\delta(t, x,\omega, p)=\omega^2I-\Gamma(x,p), \quad (t, x,\omega, p)\in \R^{1+3}\times \R^{1+3}. 
\end{equation}
Since the matrix $\Gamma(x,p)$ is positive definite and symmetric, it has three positive eigenvalues $\lambda^m(x,p),\: m \in \{1,2,3\}$. 

For example in homogeneous medium, where $a_{ijkl}$ are constants, and there are no lower order terms on the operator $P$ and $\lambda^m(x,p)=\lambda^m(p),$ the equation
\begin{equation}
Pu(x,t)=0
\end{equation}
has plane wave solution $u(x,t)=v e^{i(p\cdotp x-(\lambda^m(p))^{1/2}t}$. These waves correspond to waves that propagate with speed $c^m(p)=(\lambda^m(p))^{1/2}$ to the direction $p$.

We assume that 
\begin{equation}
\label{eq:nonvanishing_lamda_der}
\lambda^1(x,p) > \lambda^{m}(x,p), \quad  m \in \{2,3\}\hbox{, $(x,p) \in \R^3\times (\R^3\setminus \{0\})$}. 
\end{equation}
Then it follows from the  Implicit Function Theorem that $\lambda^1(x,p)$ and a related unit eigenvector $q^1(x,p)$ are smooth with respect to $(x,p)$.  See for instance \cite[Chapter 11, Theorem 2]{Evans} for more details.  Moreover the function $\lambda^1(x,p)$ is homogeneous of degree $2$ with respect to $p$. Due to~\eqref{eq:Chirstoffel_matrix} the Christoffel matrix $\Gamma(x,p)$ is real analytic on each fiber $T^\ast_x \R^3$.
Since $\lambda^1(x,p)$ is defined by a polynomial equation $\det{(\Gamma(x,p)-\lambda(x,p) I)}=0$ in $p$, we may apply the real analytic implicit function theorem (see e.g.~\cite[Theorem 2.3.5]{SP:real-analytic}) to see that $\lambda^1$ is real analytic on the fibers of~$T^\ast \R^3$.

\medskip

To keep the notation simple, we write from now on $\lambda:=\lambda^1(x,p)$ and $q:=q^1(x,p)$. We use $\Gamma q = \lambda q$ and~\eqref{eq:Chirstoffel_matrix} to compute the Hessian of $\lambda(x,p)$ with respect to $p$ to obtain
\begin{equation}
\label{eq:Hessian_of_g^1_1}
\hbox{Hess}_p(\lambda(x,p))= 2\bigg(\Gamma(q(x,p))+(Dq)^T(\lambda(x,p)I-\Gamma(x,p))Dq\bigg),
\end{equation}
where $Dq$ is the Jacobian of $q(x,p)$ with respect to $p$ and the
superscript $T$ stands for transpose. Since $\Gamma(q(x,p))$ is
positive definite it follows from~\eqref{eq:nonvanishing_lamda_der}
and~\eqref{eq:Hessian_of_g^1_1} that the Hessian of $\lambda(x,p)$ is
also positive definite. We note that a similar result has been
presented in~\cite{antonelli2003geometrical} under the assumption the
stiffness tensor is homogeneous and transversely isotropic.

\medskip

We define a continuous function $f(x,p):=\sqrt{\lambda(x,p)}$, which
is smooth outside $\R^3 \times \{0\}$. We conclude with summarizing
the properties of the function $f$:


\begin{equation}
\label{eq:co_fins}
\begin{split}
\text{(i)}&\hbox{ The function $f \colon \R^3\times(\R^3\setminus \{0\})\to (0,\infty)$ is smooth,}
\\
&\hbox{ \: real analytic on the fibers}
\\
\text{(ii)}&\hbox{ For every $(x,p) \in \R^3\times \R^3$ and $s\in \R $ }
\\
&\hbox{ \: it holds that $f(x,sp)=|s|f(x,p)$}
\\
\text{(iii)}&\hbox{ For every $(x,p) \in \R^3\times(\R^3\setminus \{0\})$ the Hessian of $\frac{1}{2}f^2$ is}
\\
&\hbox{ \: symmetric and positive definite with respect to $p$.}
\end{split} 
\end{equation}
Therefore, $f$ is a convex (Minkowski) norm on the cotangent space. Finally, we define a Finsler function $F$ to
be the Legendre transform of $f$. Thus the bicharacteristic curves of
Hamiltonian $\frac{1}{2}\big(\lambda(x,p)-1\big)$ are given by the
co-geodesic flow of $F$. Moreover the $qP$ group velocities are given
by the Finsler structure.

\medskip

Another geometrical inverse problem on Finsler manifolds, using
exterior geodesic sphere data, is presented in~\cite{Finsler_Dix},
extending an earlier result on Riemannian manifolds~\cite{deHoop1}.

\medskip 

The connection between Finsler geometry elastic media has been also considered for instance in \cite{ clayton2015finsler, clayton2017finsler, yajima2009finsler}. For Finsler geometry and the deformations of oriented media we suggest the reader to see for instance~\cite{bejancu1990finsler}.

\subsection{Inverse problem for the travel times of qP-waves}

The travel time data that we consider next can be viewed as being obtained from propagation of singularities of \textit{qP} polarized waves generated at Dirac sources in the interior. This originates from the work of Dencker~\cite{dencker1982propagation}, and follows upon microlocally diagonalizing the elastic wave operator with a smooth stiffness tensor, even though it is not of principal type. Nonetheless, the diagonal component $\lambda$, which is second-order elliptic pseudodifferential in space, associated with \textit{qP} polarized waves can be smoothly extracted. (This is not the case for the other polarizations.) We note that the $\Psi$DO with principal symbol  $\omega^2-\lambda(x,p), \: \omega\neq 0$ is of real principal type on $\R^{1+3}$, and one can carry out a parametrix construction for this operator, which generates a classical Fourier integral operator. The principal symbol $\omega^2-\lambda(x,p)$ of the pseudodifferential operator defines a Hamiltonian and the associated flow determines a Lagrangian submanifold and its corresponding generating function. This function yields the phase function in the oscillatory integral representation of the parametrix. By H\"{o}rmander's theorem \cite[Theorem 2.5.15]{hormander1971fourier}, the propagation of singularities by the mentioned Fourier integral operator is determined by this phase function, and by the implication of \cite[Proposition 2.1]{greenleaf1993recovering} follows the Hamiltonian flow. Travel time, arrival of singularities, is measured along the Hamiltonian flow. The wavefront set detection, at the boundary, of the polarized wave solution, generated by each Dirac source in the interior thus provides the data in our inverse problem. Moreover, the projection of the Hamiltonian flows on the base manifold of the cotangent bundle coicide with the geodesics in the Finsler geometry, where the co-Finsler function is identified with the principal symbol of the above mentioned pseudodifferential operator $\lambda$.

Let $M \subset \R^3$ be a bounded domain with a smooth boundary~$\p M$. We assume that~$\p M$ is strictly convex with respect to~$F$, that is the second fundamental form of $\p M$ is positive definite. In this case the compact Finsler manifold $(\overline M,F)$ is geodesically complete~\cite{bartolo2011convex}, which means that any pair of points can be connected by a distance minimizing geodesic. We suppose that~$F$ is a complete Finsler metric on~$\R^3$. Moreover we assume that the shortest curves of $(\R^3,F)$ connecting points in~$\overline M$ are contained in~$\overline M$. We call these  the ``completeness" conditions for $(\overline M,F)$.

Let $x_0 \in M$, $t_0>0$ and let $h(t,x)$ be a Dirac delta function at $(t_0,x_0) \in \R\times M$. We also denote $\mathrm D:= \mathrm i(\p_{x_1}, \p_{x_2}, \p_{x_3})$. Let $u^{t_0,x_0}$ be the solution of $(\frac{\p^2}{\p t^2}-\lambda(x,\mathrm{D}))u(t,x)=h(t,x)$ on $(0,\infty) \times \R^3$, with vanishing initial conditions. We note that
$
\p (\R \times M)=\R\times \p M,
$
and therefore for any $(x,t,\omega,p) \in \p (T^\ast(\R \times M))$ it holds that $(x,p)\in \p (T^\ast M)$ and $x \in \p M$. Then we set
\begin{equation}
\Lambda_{x_0,t_0}:= \p (T^\ast(\R \times M)) \cap \hbox{wavefront set}(u^{t_0,x_0})
\end{equation}
and define the \textit{travel time} from $x_0$ to $z \in \p M$ to be
\begin{equation}
\mathcal T(t_0,x_0,z):=\inf \{t>0: \: T^\ast_{t,z} (\R \times M) \cap \Lambda_{x_0,t_0} \neq \emptyset \}-t_0.
\end{equation}

\begin{Theorem}
\label{Th:elastic}
Let $c_{ijk\ell}(x)$ be an ansitropic elastic tensor and~$\rho$ a smooth density of the mass on $\R^3$. Suppose that the eigenvalues of the corresponding Christoffel matrix $\Gamma$ satisfy the inequality~\eqref{eq:nonvanishing_lamda_der}. Then the square root of the largest eigenvalue of~$\Gamma$ satisfies (i)--(iii) of~\eqref{eq:co_fins}.

Let $M \subset \R^3$ be an open bounded set with smooth boundary. If $F$ is the Finsler metric on $M$ given by $\lambda^1$ that satisfies the completeness conditions, stated above, on $M$ then the travel time data
\begin{equation}
\{\{\mathcal T(t_0,x_0,z): \: z \in \p M\}, \: x_0 \in M, \: t_0>0\}
\end{equation}
determine the isometry class of $(\overline M,F)$.
\end{Theorem}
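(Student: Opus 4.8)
The plan is to deduce the statement from Theorem~\ref{Th:analytic} once the travel time data are reinterpreted as boundary distance data for the Finsler manifold $(\overline M,F)$. The first assertion---that $f=\sqrt{\lambda^1}$ satisfies (i)--(iii) of~\eqref{eq:co_fins}---has essentially been carried out in the discussion preceding the theorem, so the proof only needs to assemble it: homogeneity of degree one in $p$ (item (ii)) follows from $\lambda^1$ being homogeneous of degree two; smoothness off the zero section and fiberwise real analyticity (item (i)) follow by applying the smooth, respectively real analytic, implicit function theorem to the characteristic equation $\det(\Gamma(x,p)-\lambda I)=0$, using the spectral gap~\eqref{eq:nonvanishing_lamda_der}; and positive definiteness of $\mathrm{Hess}_p(\tfrac12 f^2)=\tfrac12\,\mathrm{Hess}_p\lambda^1$ (item (iii)) is read off from~\eqref{eq:Hessian_of_g^1_1}, since $\Gamma(q(x,p))$ is positive definite while $(Dq)^T(\lambda^1 I-\Gamma)Dq$ is positive semidefinite because $\lambda^1$ is the \emph{largest} eigenvalue of $\Gamma$. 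I would then record that the fiberwise Legendre transform $F$ of the fiberwise real analytic co-Finsler norm $f$ is itself a Finsler function and is again fiberwise real analytic: the Legendre map $(x,p)\mapsto\bigl(x,\p_p(\tfrac12 f^2)(x,p)\bigr)$ is a fiberwise diffeomorphism by (iii), and by the real analytic implicit function theorem its inverse is real analytic along fibers, so the same holds for $F$, which is obtained from $f$ by composition with this inverse.

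The main step is to show that $\mathcal T(t_0,x_0,z)=d_F(x_0,z)=r_{x_0}(z)$ for all $x_0\in M$ and $z\in\p M$. Here I would use the microlocal picture recalled in the text: the operator $\p_t^2-\lambda(x,\mathrm{D})$ has principal symbol $\omega^2-\lambda(x,p)$, which is of real principal type where $\omega\neq0$, and its forward null bicharacteristics issued over $(t_0,x_0)$ project in the $x$-variable to $F$-geodesics starting at $x_0$, traversed so that the elapsed $F$-length along such a geodesic equals $t-t_0$ (the Hamilton flow of $\omega^2-\lambda$ on the sheet $\omega=f(x,p)$ projects to the co-geodesic flow of $F$). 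Carrying out the standard causal parametrix construction for this operator---following Dencker~\cite{dencker1982propagation} and H\"ormander~\cite{hormander1971fourier}, together with~\cite{stolk2002microlocal,greenleaf1993recovering}, as outlined above---the wavefront set of the causal solution $u^{t_0,x_0}$ of $(\p_t^2-\lambda(x,\mathrm{D}))u=\delta_{(t_0,x_0)}$ (with vanishing initial data) is exactly this forward flowout, and the leading singularity does not cancel. Hence a covector over $(t,z)$ with $z\in\p M$ lies in $\WF(u^{t_0,x_0})$ if and only if there is an $F$-geodesic in $\overline M$ from $x_0$ to $z$ of $F$-length $t-t_0$; taking the infimum over admissible $t$ and invoking the completeness conditions---which give existence of $F$-minimizers in $\overline M$ by~\cite{bartolo2011convex} and force every $F$-minimizer between points of $\overline M$ to remain in $\overline M$, so that the ambient and intrinsic Finsler distances agree---yields the asserted identity. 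In particular the dependence on $t_0$ is vacuous.

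It then remains only to assemble the pieces: the non-indexed collection $\{\{\mathcal T(t_0,x_0,z):z\in\p M\}:x_0\in M,\ t_0>0\}$ is precisely the boundary distance data $(\mathcal R(M),\p M)$ of the smooth compact connected Finsler manifold $(\overline M,F)$ with smooth boundary, and $F$ is fiberwise real analytic by the first step, so Theorem~\ref{Th:analytic} applies and shows that these data determine $(\overline M,F)$ up to Finslerian isometry. I expect the microlocal step to be the only genuine obstacle: one must ensure that ``first arrival of the wavefront set at $\p M$'' is governed by the true Finsler distance, i.e. that no spurious cancellation empties the wavefront set along the leading front and that strict $F$-convexity of $\p M$ together with the completeness conditions excludes both $F$-geodesics that merely graze $\p M$ before reaching it and ``shortcuts'' through $\R^3\setminus\overline M$; granting this, the reduction to Theorem~\ref{Th:analytic} is immediate.
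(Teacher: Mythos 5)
Your proposal follows the paper's route exactly: the first part is assembled from the discussion preceding the theorem, the identity $\mathcal T(t_0,x_0,z)=d_F(x_0,z)$ is extracted from the completeness conditions, and the conclusion is reduced to Theorem~\ref{Th:analytic}. Your extra remark --- that the Legendre transform preserves fiberwise real analyticity, so that $F$ (not just the co-Finsler norm $f$) is fiberwise real analytic --- is a point the paper leaves implicit, and you resolve it correctly via the real analytic inverse function theorem.
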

\begin{proof}
By the completeness assumptions it holds that $\mathcal T(x_0,z)=d_F(x_0,z)$ for any $x_0 \in M$ and $z \in \p M$. Thus the  result follows from Theorem~\ref{Th:analytic}.
\end{proof}

\medskip


\section{Proof of theorem~\ref{Th:smooth}}
\label{Se:proof}

In this section we provide a proof of Theorem~\ref{Th:smooth}. The
proof is divided into four parts. In the first part, we consider the
topology and introduce a homeomorphism $\Psi$ from $(M_1,F_1)$ onto
$(M_2,F_2)$. The second part is devoted to proving that homeomorphism
$\Psi$ is smooth and has a smooth inverse. In the third part, we study
smoothness of a distance function $d_F(\cdot,z), \: z \in \p M$ in
those interior points $x$ where a distance minimizing curve from $x$
to $z$ is a geodesic contained in the interior. Then, in the final
part, we use the result obtained in the third part to prove that the
Finsler functions $F_1, \: \Psi^\ast F_2$ coincide in the set
$\overline{G(M_1, F_1)}$ (recall Notation~\ref{De:good_set}), but
not necessarily in its exterior.

\subsection{Topology}
Here, we define a map $\Psi\colon(M_1,F_1)\to (M_2,F_2)$ that will be
shown to satisfy the claim of Theorem~\ref{Th:smooth}. Whenever we do
not need to distinguish manifolds $M_1$ and $M_2$ we drop the
subindices.

We start with showing that data~\eqref{eq:data} determine the function $r_x\colon\p M \to \R$ for each $x \in \p M$. By the triangle inequality and the continuity of distance function $d_F(\cdot,z)$ on $M$ we have
\begin{equation}
\label{eq:boundary_point_dist}
r_x(z):=d_F(x,z)=\sup_{q \in M^{int}}(d_F(q,z)-d_F(q,x))=\sup_{q \in M^{int}}(r_q(z)-r_q(x))
\end{equation}
for all  $z \in \p M$. Thus data~\eqref{eq:data} determine $r_x$, moreover~\eqref{eq:BDD_agree_start} and~\eqref{eq:boundary_point_dist} imply
\begin{equation}
\label{eq:BDD_agree_1}
\{r_{x_1}:{x_1}\in M_1\}=\{r_{x_2}\circ \phi:{x_2}\in M_2\}\subset C(\p M_1).
\end{equation}


\color{black}
Since $\p M$ is compact it holds that for any ${x}\in M$ the corresponding boundary distance function $r_{x}$ belongs to $C(\p M) \subset L^\infty(\p M)$.  By~\eqref{eq:data} and~\eqref{eq:boundary_point_dist}  we have recovered the mapping
\begin{equation}
\label{eq:map_R}
\mathcal{R}\colon M \to C(\p M), \quad \mathcal{R}(x)=r_x.
\end{equation}
\color{black}
In the next proposition, we study the properties of this map.

\begin{Proposition}
\label{Pr:topology}
Let  $(M,F)$ be a smooth compact Finsler manifold with smooth boundary. The map $\mathcal{R}$ given by~\eqref{eq:map_R} is a topological embedding.
\end{Proposition}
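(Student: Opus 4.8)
The plan is to show the map $\mathcal{R}\colon M \to C(\p M)$ (with the sup norm, hence a subspace of $L^\infty(\p M)$) is a continuous injection and a homeomorphism onto its image. Since $M$ is compact and $C(\p M)$ is Hausdorff, it suffices to prove that $\mathcal{R}$ is continuous and injective; a continuous injection from a compact space to a Hausdorff space is automatically a topological embedding. So the two things to establish are: (1) $x \mapsto r_x$ is continuous from $M$ into $(C(\p M),\|\cdot\|_\infty)$, and (2) $x \mapsto r_x$ is injective.

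\textbf{Continuity.} I would use the (non-symmetric) triangle inequality for $d_F$. For $x, x' \in M$ and any $z \in \p M$,
\[
r_x(z) - r_{x'}(z) = d_F(x,z) - d_F(x',z) \le d_F(x,x'),
\]
and symmetrically $r_{x'}(z) - r_x(z) \le d_F(x',x)$. Hence
\[
\|r_x - r_{x'}\|_{L^\infty(\p M)} \le \max\{ d_F(x,x'),\, d_F(x',x) \}.
\]
Since $F$ is a (smooth, hence continuous) Finsler function on the compact manifold $M$, the symmetrized distance $\widehat d_F(x,x') := \max\{d_F(x,x'), d_F(x',x)\}$ induces the manifold topology (this is standard: $d_F$ is continuous, positive off the diagonal, and locally comparable to a Riemannian distance); therefore $\mathcal R$ is in fact Lipschitz with respect to $\widehat d_F$, and in particular continuous. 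One small point to record: $r_x$ is indeed continuous on $\p M$ for each fixed $x$ (again by the triangle inequality in the $z$ variable), so $r_x \in C(\p M)$ and the codomain is legitimate.

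\textbf{Injectivity.} Suppose $r_x = r_{x'}$, i.e. $d_F(x,z) = d_F(x',z)$ for all $z \in \p M$; I want $x = x'$. The natural argument: pick $z \in \p M$ realizing $d_F(x,z) = d_F(x',z) =: R$ and let $\gamma$ be a distance-minimizing unit-speed $F$-geodesic from $x$ to $z$ (existence of such a minimizer to the boundary follows since $M$ is compact — take a minimizing sequence of curves and apply Arzelà--Ascoli / lower semicontinuity of length, or invoke the geodesic completeness facts available in the background sections). Along $\gamma$, for $t \in [0,R]$ we have $d_F(\gamma(t), z) = R - t$, and by the triangle inequality $d_F(x', z) \le d_F(x',\gamma(t)) + d_F(\gamma(t),z)$, hence $d_F(x',\gamma(t)) \ge R - (R-t) = t = d_F(x,\gamma(t))$... this alone is not quite enough, so instead I would argue as follows. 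If $x \ne x'$, choose a boundary point $z_0$ such that a distance-minimizing geodesic from $x$ to $z_0$ does \emph{not} pass through $x'$ — more robustly, consider $z_0 \in \p M$ close to the exit point of a short geodesic leaving $x$ in a direction pointing ``away'' from $x'$, so that $d_F(x', z_0) > d_F(x, z_0)$ strictly. Concretely: fix any $z_1 \in \p M$ and the minimizing geodesic $\sigma$ from $x'$ to $z_1$; a neighbourhood of $x$ lies strictly inside the ball structure so that moving from $x'$ toward $x$ we can find an exit direction. The cleanest formulation: since $x \in M^{\rm int}$ in the interesting case (boundary points are handled because $\mathcal R$ restricted to $\p M$ is injective by strict triangle inequality as $z \to x$, giving $r_x(x) = 0$ but $r_{x'}(x) > 0$), take any unit vector $y \in T_x M$ and the geodesic $\gamma_{x,y}$; for small $t>0$, $d_F(\gamma_{x,y}(t), z) \le d_F(x,z) + t$ with equality for $z$ along the forward geodesic extension. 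Choosing $z$ to be the exit point of $\gamma_{x,y}$ and using $r_x(z)=r_{x'}(z)$ forces $x'$ onto this minimizing geodesic for \emph{every} exit direction $y$, which is impossible unless $x' = x$ (the intersection of all such minimizing geodesics from $x$ is $\{x\}$, since the minimizing geodesics from an interior point fill a neighbourhood of $x$). I will spell this out by using the exponential-type map: for $x \in M^{\rm int}$ there is $\varepsilon > 0$ so that $\exp_x$ is a diffeomorphism from a ball in $T_x M$ onto a neighbourhood $U$ of $x$ in which radial geodesics are minimizing; if $x' \in U \setminus \{x\}$, write $x' = \gamma_{x,y}(s)$ with $s>0$ and extend $\gamma_{x,y}$ to its exit point $z^\ast \in \p M$; then $r_x(z^\ast) = d_F(x,z^\ast)$ equals $s + d_F(x', z^\ast)$ only if $\gamma$ stays minimizing, but in general $r_x(z^\ast) - r_{x'}(z^\ast) = s > 0$ when $\gamma$ is minimizing all the way to $z^\ast$, contradiction; and one can always arrange $z^\ast$ so that the relevant segment is minimizing by shrinking, i.e. choosing $z^\ast$ close enough to $x'$ along the geodesic — made rigorous via the openness of $G(M,F)$ or simply by continuity of $d_F$ near the diagonal. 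If instead $x'$ is far from $x$, pick $z^\ast$ to be the exit point of \emph{any} short minimizing geodesic from $x$; then $r_x(z^\ast)$ is small while $r_{x'}(z^\ast) \ge d_F(x',x) - r_x(z^\ast)$ is bounded below, again a contradiction once $z^\ast$ is chosen close to $x$.

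\textbf{Main obstacle.} The only genuinely delicate point is injectivity, specifically arguing that for distinct interior points one can always select a boundary point $z$ at which the two boundary distance functions differ — the subtlety being that a geodesic from $x$ toward its exit point need not remain minimizing, so one cannot naively say $d_F(x,z^\ast) = s + d_F(x',z^\ast)$. The fix is to localize: work in a normal neighbourhood where radial geodesics minimize, pick $z^\ast$ on the short minimizing radial geodesic through $x'$ but still inside where minimality holds (or, if $x'$ is outside every normal neighbourhood of $x$, run the symmetric argument with roles localized near $x$), and use continuity of $d_F$ together with the triangle inequality to get a strict inequality $r_x(z^\ast) \ne r_{x'}(z^\ast)$. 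Non-symmetry of $d_F$ requires a little care in bookkeeping (which point is the source, which the target) but causes no real trouble since all estimates above are one-sided and we only ever take a max over the two orderings. Everything else — continuity, the compact-to-Hausdorff upgrade to ``embedding'' — is routine.
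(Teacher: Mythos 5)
Your continuity argument and the compact-to-Hausdorff upgrade match the paper's; the paper just phrases the estimate using a quasi-symmetry constant $L$ from compactness of $SM$ rather than $\max\{d_F(x,x'),d_F(x',x)\}$, which is a cosmetic difference. The real divergence, and the problem, is in injectivity.

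Your injectivity argument does not go through. You propose to take the geodesic $\gamma_{x,y}$ from $x$ through $x'$, extend it to its exit point $z^\ast\in\partial M$, and compare $r_x(z^\ast)$ with $r_{x'}(z^\ast)$. But there is no guarantee this geodesic ever reaches $\partial M$ (it may be trapped), and even when it does you cannot assume it stays minimizing all the way to $z^\ast$ — you identify this as ``the main obstacle'' and then claim to fix it by choosing $z^\ast$ ``close enough to $x'$ along the geodesic,'' which is incoherent because $z^\ast$ must lie on $\partial M$ and there need not be any boundary point near $x'$. Your fallback ``$x'$ far from $x$'' case is also broken: you want ``the exit point of any short minimizing geodesic from $x$'' to be a boundary point ``chosen close to $x$,'' but if $x$ is deep in the interior, \emph{every} boundary point is at distance at least $d_F(x,\partial M)>0$, so $r_x(z^\ast)$ is bounded away from zero and the intended contradiction never materialises.

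The paper avoids all of this with one observation you did not use: take $z$ to be a \emph{nearest} boundary point to $x$. Since $r_x=r_{x'}$, the point $z$ is also a nearest boundary point to $x'$, and $h:=r_x(z)=r_{x'}(z)$. If $h=0$ both points equal $z$. If $h>0$, the minimizing curve from $x$ to $z$ is a geodesic whose terminal velocity is the unique outward unit normal to $\partial M$ at $z$ (Lemma~\ref{Le:normal_geo_is_mini}); the same holds for the minimizer from $x'$. By uniqueness of geodesics with prescribed terminal position and velocity, the two geodesics coincide, and since both have length $h$ their initial points agree, i.e.\ $x=x'$. This handles trapping and loss of minimality automatically (the nearest-boundary-point geodesic is \emph{always} minimizing), and it does not require $x'$ to be near $x$. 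You should replace your injectivity argument with this normal-geodesic argument; as written, yours has a genuine gap that the hand-waving about ``openness of $G(M,F)$'' does not close.
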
 
\begin{proof}
Since $M$ is compact, $d_F$ is a complete non-symmetric (path) metric, and by \cite[Theorem 2.5.23]{burago2001course} for any $x_1,x_2 \in M$ there exists a distance minimizing curve $\gamma\colon [0,d_F(x_1,x_2)]\to M$ from $x_1$ to $x_2$. Moreover, whenever $a,b \in [0,d_F(x_1,x_2)]$ are such that $\gamma((a,b))\subset M^{int}$, then  $\gamma\colon [a,b]\to  M$ is a geodesic. 


Since the unit sphere bundle $SM:=F^{-1}\{1\}$ is compact there exists a universal constant $L>1$,  such that for all $ x_1,x_2 \in M$ we have
\begin{equation}
\label{eq:quasi}
\frac{1}{L} d_F(x_1,x_2)\leq d_F(x_2,x_1) \leq L d_F(x_1,x_2).
\end{equation}
We let $x_1,x_2 \in M$ and $z\in \p M$. By triangular inequality we have
\begin{equation}
|d_F(x_1,z)-d_F(x_2,z)| \leq L d_F(x_1,x_2).
\end{equation}
Thus $\|r_{x_1}-r_{x_2}\|_\infty\leq L d_F(x_1,x_1)$, which proves that the map $\mathcal{R}$ is continuous.

We suppose then that $r_{x_1}=r_{x_2}$ for some $x_1,x_2 \in M$.
We let $z$ be one of the closest boundary points to $x_1$.
Then $z$ is also a closest boundary point to $x_2$.
Denote $r_{x_1}(z)=h$.
If $h=0$ then $x_1=z$ and thus $x_1=x_2$.
We suppose then that $h>0$, which means that $x_1$ and $x_2$ are interior points of $M$.

It follows from Lemma~\ref{Le:normal_geo_is_mini} that for both $i=1,2$ the minimizing curve from $x_i$ to $z$ is a geodesic and meets the boundary orthogonally.
Let us denote this geodesic by $\gamma_i$.
We shift time parameter so that $z=\gamma_1(0)=\gamma_2(0)$, and thus $x_i=\gamma_i(-h)$ for both indices.
As the two geodesics share the same initial point and initial direction, we conclude that $\gamma_1=\gamma_2$ and consequently 
\begin{equation}
x_1=\gamma_1(-h)=\gamma_2(-h)=x_2.
\end{equation}

The injectivity of $\mathcal{R}$ implies that it is a topological embedding, as any continuous mapping from a compact space to a Hausdorff space is closed.
\end{proof} 

Next we define maps
\begin{equation}
\Phi\colon C(\p M_1) \to C(\p M_2), \quad \Phi(f)=f\circ \phi^{-1}
\end{equation}
and 
\begin{equation}
\label{eq:map_Psi}
\Psi\colon M_1\to M_2, \quad \Psi=\mathcal R^{-1}_2\circ \Phi \circ \mathcal R_1
\end{equation}
Here $\mathcal R_i$ is defined as $\mathcal{R}$ in~\eqref{eq:map_R}. The main theorem of the section is the following

\begin{Theorem}
\label{Th:topology}
Let $(M_i,F_i), \: i =1,2$ be as in Theorem~\ref{Th:smooth}. Then the map $\Psi\colon M_1\to M_2$ given by~\eqref{eq:map_Psi} is a homeomoprhism. Moreover $\Psi|_{\p M_1}=\phi$.
\end{Theorem}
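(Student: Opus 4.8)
The plan is to verify the two assertions of Theorem~\ref{Th:topology} in turn, the boundary identity first, then the homeomorphism property, leaning entirely on Proposition~\ref{Pr:topology} together with the extended equality~\eqref{eq:BDD_agree_1}.

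First I would establish $\Psi|_{\p M_1}=\phi$. For $z_1 \in \p M_1$ the boundary distance function $r_{z_1}$ vanishes exactly at $z_1$ (it is nonnegative and $r_{z_1}(z_1)=d_F(z_1,z_1)=0$, while $r_{z_1}(w)=d_F(z_1,w)>0$ for $w\neq z_1$ by the fact that $d_F$ is a genuine metric on the compact manifold $M$). Hence $\Phi(r_{z_1}) = r_{z_1}\circ\phi^{-1}$ vanishes exactly at $\phi(z_1)\in\p M_2$. By~\eqref{eq:BDD_agree_1} the function $\Phi(r_{z_1})$ lies in $\mathcal R_2(M_2)$, say $\Phi(r_{z_1}) = r_{x_2}$; since this function has a zero, $x_2$ must be that zero, i.e. $x_2 = \phi(z_1) \in \p M_2$. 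Therefore $\Psi(z_1) = \mathcal R_2^{-1}(r_{\phi(z_1)}) = \phi(z_1)$, using injectivity of $\mathcal R_2$ (Proposition~\ref{Pr:topology}). This gives $\Psi|_{\p M_1}=\phi$ and, as a byproduct, that $\Psi$ maps $\p M_1$ onto $\p M_2$ and $M_1^{int}$ onto $M_2^{int}$.

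Next I would check that $\Psi$ is a well-defined homeomorphism. Proposition~\ref{Pr:topology} says each $\mathcal R_i\colon M_i \to C(\p M_i)$ is a topological embedding, so $\mathcal R_i$ is a homeomorphism onto its image $\mathcal R_i(M_i)$, and $\mathcal R_i^{-1}$ is continuous on that image. The map $\Phi\colon C(\p M_1)\to C(\p M_2)$, $f\mapsto f\circ\phi^{-1}$, is a linear isometric bijection (with inverse $g\mapsto g\circ\phi$) since $\phi$ is a diffeomorphism, hence in particular a homeomorphism; and~\eqref{eq:BDD_agree_1} states precisely that $\Phi$ carries $\mathcal R_1(M_1)$ bijectively onto $\mathcal R_2(M_2)$. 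Consequently the composition $\Psi = \mathcal R_2^{-1}\circ\Phi\circ\mathcal R_1$ is well-defined on all of $M_1$, is a bijection $M_1\to M_2$, and is continuous with continuous inverse $\Psi^{-1} = \mathcal R_1^{-1}\circ\Phi^{-1}\circ\mathcal R_2$. This is exactly the claim.

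There is essentially no hard obstacle here; the proposition has already done the work. The one point requiring a little care is confirming that $\Phi$ maps the image set $\mathcal R_1(M_1)$ \emph{onto} $\mathcal R_2(M_2)$ rather than merely into it — but this is the content of the set equality~\eqref{eq:BDD_agree_1}, which in turn was derived from the hypothesis~\eqref{eq:BDD_agree_start} via the formula~\eqref{eq:boundary_point_dist} that extends the boundary distance data from $M^{int}$ to all of $M$. The only other subtlety is making sure $r_{z_1}$ has a \emph{unique} zero for boundary points $z_1$, which is immediate from $d_F$ being a true (non-symmetric) distance. I would also remark that the argument nowhere uses the Finsler structure beyond what is packaged in Proposition~\ref{Pr:topology}, so this part of the proof is identical in form to the Riemannian case.
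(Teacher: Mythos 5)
Your proof is correct and follows essentially the same route as the paper's: use Proposition~\ref{Pr:topology} (topological embedding) together with the extended set equality~\eqref{eq:BDD_agree_1} to get that $\Psi$ is a well-defined homeomorphism, and use the fact that $r_{z_1}$ vanishes (uniquely) at a boundary point $z_1$ to pin down $\Psi|_{\p M_1}=\phi$. The only difference is presentation: you spell out the unique-zero argument and the fact that $\Phi$ maps $\mathcal R_1(M_1)$ onto $\mathcal R_2(M_2)$, steps the paper leaves implicit.
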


\begin{proof}
By~\eqref{eq:BDD_agree_1} and Proposition~\ref{Pr:topology} it holds that $\Psi$ is well defined. Clearly the map $\Phi$ is a homeomorphism and therefore $\Psi$ is a homeomorphism. 

We let $x_1 \in \p M_1$. Then $(\Phi \circ \mathcal R_1)(x_1)$ is $r_{x_2}$ for some $x_2 \in M_2$. Since 
\begin{equation}
r_{x_2}(\phi(x_1))=[(\Phi \circ \mathcal R_1)(x_1)](\phi(x_1))=r_{x_1}(x_1)=0,
\end{equation}
we have $d_{F_2}(x_2,\phi(x_1))=0$ and so $x_2=\phi(x_1)$.
This proves $\Psi(x_1)=\phi(x_1)$.
\end{proof}

%
%

\subsection{Differentiable structure}

Here, we show that the map $\Psi\colon M_1\to M_2$ is a
diffeomorphism. We split the study in two cases, near the boundary and
far from the boundary. We begin with the former one.

We extend $(M,F)$ to a closed Finsler manifold $(N,H)$ to facilitate the study of boundary points.
(This can be done for instance by constructing the ``double" of $M$. See for instance \cite[Example 9.32]{lee2013smooth}.)

We let $\stackrel{\leftarrow}{\nu_{in}}$ be the inward pointing unit normal vector field to $\p M$ with respect to reversed Finsler function $\stackrel{\leftarrow}{F}$. We define the
normal exponential map $\exp^\perp\colon \partial M\times\R\to N$ so that
\begin{equation}
\exp^\perp(z,s)
:= \;
\stackrel{\leftarrow}{\exp_z}(s\stackrel{\leftarrow}{\nu_{in}}(z)),
\end{equation}
where $\stackrel{\leftarrow}{\exp_z}$ is the exponential map of the reversed Finsler function $\stackrel{\leftarrow}{H}$.

\begin{Lemma}
\label{Le:boundary_normal_coordinates}
There exists $h>0$ such that $M$ is contained in the image of the normal map.
Moreover there exists $r>0$ such that $\exp^\perp\colon \p M\times [0,r) \to M$ is a diffeomorphism onto its image. 
\end{Lemma}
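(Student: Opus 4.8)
The plan is to prove Lemma~\ref{Le:boundary_normal_coordinates} in two steps: first, that the normal exponential map covers all of $M$ for some fixed collar width $h>0$, and second, that it is a diffeomorphism onto its image for some possibly smaller $r>0$. Throughout I would work in the closed extension $(N,H)$ so that the reversed exponential map $\stackrel{\leftarrow}{\exp}$ and the geodesics $\stackrel{\leftarrow}{\gamma}$ issuing normally from $\p M$ are defined for all time, and I would use the fact (established in the appendix, cf.\ Lemma~\ref{Le:normal_geo_is_mini}) that a unit-speed $\stackrel{\leftarrow}{F}$-distance-minimizing curve from a boundary point $z$ to an interior point $x$ realizing $d_{\stackrel{\leftarrow}{F}}(z,x)=\dist_{\stackrel{\leftarrow}{F}}(z,M^{int})$-type minimality hits $\p M$ orthogonally; equivalently, for every $x\in M^{int}$ a closest boundary point $z$ to $x$ (in the $F$-sense) is joined to $x$ by a normal geodesic $\exp^\perp(z,\cdot)$ of length $r_x(z)=d_F(x,z)$.

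For the first statement, I would set $h:=\sup_{x\in M}\dist_F(x,\p M)=\sup_{x\in M}\min_{z\in\p M} d_F(x,z)$, which is finite and attained because $M$ is compact and $d_F$ is continuous. Given any $x\in M$, pick a closest boundary point $z$; by the orthogonality lemma the minimizing geodesic from $z$ to $x$ is exactly $s\mapsto\exp^\perp(z,s)$ for $s\in[0,d_F(x,z)]\subset[0,h]$, so $x=\exp^\perp(z,d_F(x,z))$ lies in the image of $\exp^\perp\colon\p M\times[0,h]\to N$. This is the easy half.

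For the second statement, the standard mechanism applies: $\exp^\perp$ is smooth (the normal field $\stackrel{\leftarrow}{\nu_{in}}$ is smooth since $\p M$ is smooth and the indicatrix is strongly convex, and the geodesic flow is smooth away from the zero section), and at $s=0$ its differential at $(z,0)$ is an isomorphism $T_z\p M\oplus\R\to T_zN$ — the $\R$-direction maps to the transversal vector $\stackrel{\leftarrow}{\nu_{in}}(z)\notin T_z\p M$, and the $T_z\p M$-direction maps identically. Hence by the inverse function theorem, together with the compactness of $\p M$, there is $r_0>0$ so that $\exp^\perp$ is a local diffeomorphism on $\p M\times[0,r_0)$ and, after a standard tubular-neighborhood argument, injective there: if it failed to be injective for every $r>0$ one extracts sequences $(z_k,s_k)\neq(z_k',s_k')$ with $s_k,s_k'\to 0$ and $\exp^\perp(z_k,s_k)=\exp^\perp(z_k',s_k')$, pass to convergent subsequences $z_k\to z$, $z_k'\to z'$; continuity forces $z=z'$, and then the local injectivity from the inverse function theorem near $(z,0)$ gives a contradiction. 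Taking $r:=\min(r_0,h)$ and noting $\exp^\perp(\p M\times[0,r))\subset M$ (the image of short inward normal geodesics stays in $M$ since the boundary is, say, a smooth hypersurface in $N$ and short enough normal geodesics do not leave $M$) finishes the proof.

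The main obstacle is the injectivity/embedding part of the second claim: local diffeomorphism is immediate from the inverse function theorem, but promoting it to a genuine diffeomorphism onto its image requires the compactness-of-$\p M$ argument above and a little care that the two near-boundary normal geodesics cannot cross — exactly the transverse-vector-field and index-form machinery that the paper develops in Section~\ref{Se:Appendix2} for the Finslerian setting, since unlike in Riemannian geometry one cannot simply invoke a symmetric metric to control the behavior of nearby geodesics. One should also make sure the collar genuinely lies inside $M$ rather than in the extension $N$, which is why the width is taken to be $\min(r_0,h)$ and why one uses that $\p M$ separates $M$ from $N\setminus M$ locally.
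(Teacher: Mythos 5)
Your proposal is correct and follows essentially the same route as the paper: compactness plus orthogonality of distance-minimizing normal geodesics for the covering claim, then the inverse function theorem at $s=0$ combined with a compactness/sequence argument on $\p M$ to promote the local diffeomorphism to a global one near the boundary. One small misattribution worth correcting: the injectivity step does \emph{not} require the transverse Jacobi field or index form machinery of Section~\ref{Se:Appendix2} (that is used for Lemma~\ref{Le:prop_boundary_cut} on boundary cut points), and the paper proves the present lemma with exactly the elementary sequence argument you give; if anything, your use of plain continuity of $\exp^\perp$ to force $z=z'$ is marginally cleaner than the paper's detour through the quasi-symmetry constant $L$ from~\eqref{eq:quasi}.
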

\begin{proof}
Define 
\begin{equation}
h=\max \{d_{F}(x,\p M): x\in M\}+c,
\end{equation}
for any $c>0$. Since $N$ is compact the map $\exp^\perp\colon \p M\times [0,h) \to N$ is well defined. Moreover it holds that any interior point can be connected to any of its closest boundary points via distance minimizing geodesic that is normal to the boundary. Therefore we conclude  that $M \subset \exp^\perp(\p M\times [0,h))$.

Notice that
\begin{equation}
\exp^\perp(z,t)=\pi(\stackrel{\leftarrow}{\phi}_t(z, \stackrel{\leftarrow}{\nu_{in}}(z))),
\end{equation}
where 
$ \stackrel{\leftarrow}{\phi_t}$ is the geodesic flow of $ \stackrel{\leftarrow}{H}$. Since $ \stackrel{\leftarrow}{\nu_{in}}$ is a smooth unit length vector field, this proves that $\exp^\perp$ is smooth. 

We let $z\in \p M$. Give any local coordinates $(z',f)$ near $z$ such that $f|_{\p M}=0$ is a boundary defining function. Then with respect to coordinates $(z',t)$ for $(\p M \times (-h,h))$ we have
\begin{equation}
D\exp^\perp (z,0)=
\left(\begin{array}{cc}
D_{z'}(z' \circ \exp^\perp) & \frac{\p}{\p t}(z'  \circ \exp^\perp)
\\
&
\\
D_{z'}(f  \circ \exp^\perp)& \frac{\p}{\p t}(f  \circ \exp^\perp)
\end{array} \right)
=
\left(\begin{array}{cc}
id_{n-1} & \overline{a}
\\
\\
\overline 0^T & df(\stackrel{\leftarrow}{\nu_{in}})
\end{array} \right),
\end{equation}
where $\overline{a}, \overline{0}\in \R^{n-1}$ and $df\big(\stackrel{\leftarrow}{\nu}_{in}\big)\neq 0$, since $\stackrel{\leftarrow}{\nu}_{in}$ is not tangential to $\p M$ and $f$ is a boundary defining function. Thus the Jacobian $\exp^\perp (z,0)$ is invertible and by the Inverse Function Theorem $\exp^\perp$ is a local diffeomorphism.  

Next we show that there exists $r\in (0,h)$ such that $\exp^\perp\colon \p M\times [0,r) \to M$ is a diffeomorphism onto its image.  If this does not hold, there exists a sequence $(x_j)_{j=1}^\infty \in M$ such that 
$$
\exp^\perp (z_j^1,s^1_j)=x_j=\exp^\perp (z_j^2,s^2_j)
$$
for some  $s^i_j \to 0$, $i\in \{1,2\}$ as $j\to \infty$ and for some boundary points $z^1_j$ and $z^2_j$ such that $(z_1,s_1)\neq (z_2,s_2).$
Then $d_F(x_j,\p M)\to 0$ as $j\to \infty$ and by the compactness of $N$ we may assume that $x_j\to x \in \p M$. Let $\epsilon >0$ and choose $j\in \N$ so that $d_F(x_j,x), s^i_j <\epsilon$. Then for $i\in \{1,2\}$ it holds that 
$$
d_F(x,z^i_j)\leq d_F(x,x_j)+d_F(x_j,z^i_j)<2L\epsilon
$$
where $L$ is the constant of~\eqref{eq:quasi}. Therefore, $z^i_j\to x$  as $j\to \infty$ for $i\in \{1,2\}$. This is a contradiction to the local diffeomorphism property of $\exp^\perp$. Thus there exists $r>0$ that satisfies the claim of this lemma.
\end{proof}

We immediately obtain the following.

\begin{Corollary}
\label{Le:boundary_coordinates}
Let $(M,F)$ be compact Finsler manifold with smooth boundary that is isometrically embedded into a closed Finsler manifold $(N,H)$.  Let us denote 
\begin{equation}
U(\p M, \epsilon):=\{x\in M: d_F(x,\p M)<\epsilon\}.
\end{equation}
There exists $\epsilon>0$ and a diffeomorphism $U(\p M, \epsilon) \ni x \mapsto (z(x),s(x)) \in (\p M \times [0,\epsilon))$, such that
\begin{equation*}
\label{eq:function_s}
d_F(x,z(x))=d_F(x,\p M)=s(x).
\end{equation*}
\end{Corollary}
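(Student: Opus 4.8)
The plan is to derive the Corollary directly from Lemma~\ref{Le:boundary_normal_coordinates} by unwinding the geometric meaning of the normal exponential map. First I would set $\epsilon := r$, where $r$ is the radius from Lemma~\ref{Le:boundary_normal_coordinates}, so that $\exp^\perp\colon \p M \times [0,\epsilon) \to M$ is a diffeomorphism onto its image, and define the candidate coordinate chart on that image by inverting $\exp^\perp$: for $x$ in the image set $(z(x),s(x)) := (\exp^\perp)^{-1}(x)$. The only real content is to verify that the image of $\exp^\perp|_{\p M\times[0,\epsilon)}$ contains, and in fact equals, a set of the form $U(\p M,\epsilon')$ for some $\epsilon'>0$, and that along these curves the parameter $s$ really is the distance to the boundary.

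The key step is the identity $d_F(\exp^\perp(z,s),\p M) = s$ for $s$ small. One inequality is immediate: since $t\mapsto \exp^\perp(z,t)$ is a unit-speed geodesic emanating normally from $z\in\p M$ (with respect to $\stackrel{\leftarrow}{F}$, i.e.\ the curve $t\mapsto\exp^\perp(z,s-t)$ runs from $\exp^\perp(z,s)$ to $z$ with $F$-length $s$), we get $d_F(\exp^\perp(z,s),\p M)\le d_F(\exp^\perp(z,s),z)\le s$. For the reverse inequality I would invoke Lemma~\ref{Le:normal_geo_is_mini} from the appendix (already used in the proof of Proposition~\ref{Pr:topology}): a normal geodesic is minimizing to the boundary for short time, so for $s<\epsilon$ (shrinking $\epsilon$ if necessary, using compactness of $\p M$ to get a uniform such time) one has $d_F(\exp^\perp(z,s),z)=s$ and moreover $z$ realizes the distance to $\p M$, giving $d_F(\exp^\perp(z,s),\p M)=s$. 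This simultaneously shows $z(x)$ is a closest boundary point to $x$, so $d_F(x,z(x))=d_F(x,\p M)=s(x)$ as claimed.

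Finally I would identify the domain: by the displayed identity, $\exp^\perp(\p M\times[0,\epsilon))\subset U(\p M,\epsilon)$, and conversely any $x$ with $d_F(x,\p M)<\epsilon$ can be joined to a closest boundary point $z$ by a distance-minimizing geodesic which (by Lemma~\ref{Le:normal_geo_is_mini}) meets $\p M$ normally, hence is of the form $t\mapsto\exp^\perp(z,t)$ and reaches $x$ at time $d_F(x,\p M)<\epsilon$; so $x\in\exp^\perp(\p M\times[0,\epsilon))$. Thus the image is exactly $U(\p M,\epsilon)$ and $x\mapsto(z(x),s(x))$ is the desired diffeomorphism, being the inverse of the diffeomorphism $\exp^\perp$ from Lemma~\ref{Le:boundary_normal_coordinates}. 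The main obstacle is the uniformity in $z$ of the time for which normal geodesics minimize to $\p M$ and the bookkeeping of the two reversed-Finsler conventions (inward normal of $\stackrel{\leftarrow}{F}$ versus distances measured by $F$); both are handled by compactness of $\p M$ together with the appendix lemma, so no genuinely new argument is needed beyond Lemma~\ref{Le:boundary_normal_coordinates}.
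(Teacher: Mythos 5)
Your proof is correct and takes exactly the route the paper's one-line justification leaves implicit: set $\epsilon$ equal to the $r$ of Lemma~\ref{Le:boundary_normal_coordinates}, invert $\exp^\perp$, and use Lemma~\ref{Le:normal_geo_is_mini} together with the reversal bookkeeping to show the image is $U(\p M,\epsilon)$ and to read off $s(x)=d_F(x,\p M)$. One place to tighten: Lemma~\ref{Le:normal_geo_is_mini} says that distance minimizers to $\p M$ are normal geodesics, not the converse that normal geodesics minimize distance to $\p M$ for short time, so the middle paragraph's citation is backwards and the ``shrink $\epsilon$ by compactness'' step is unnecessary. What you actually need is already contained in your final paragraph combined with the injectivity of $\exp^\perp$ on $\p M\times[0,\epsilon)$ from Lemma~\ref{Le:boundary_normal_coordinates}: if $x=\exp^\perp(z,s)$ had $d_F(x,\p M)=s'<s$, then by Lemma~\ref{Le:normal_geo_is_mini} the minimizer from $x$ to a closest boundary point $z'$ is a normal geodesic, whose reversal gives $x=\exp^\perp(z',s')$ with $(z',s')\neq(z,s)$ in $\p M\times[0,\epsilon)$, contradicting injectivity.
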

\begin{proof}
The claim follows from Lemma~\ref{Le:boundary_normal_coordinates} by denoting $(z(x),s(x)):= (\exp^\perp)^{-1}(x)$.
%
%
\end{proof} 

\medskip
We then consider points far from the boundary. Our goal is to show that for every $x_0 \in M^{int}$ there exists points $(z_i)_{i=1}^n \subset \p M$ and a neighborhood $U$ of $x_0$ such that the map
\begin{equation}
U \ni x \mapsto (d_F(x,z_i))_{i=1}^n
\end{equation}
is a coordinate map. To do this we need to set up some notation. 

\begin{Definition}
\label{De:boundary_cut_points}
Let $z \in \p M$. We say that 
\begin{equation}
\tau_{\p M}(z):=\sup \{t>0: d_F(\exp^\perp(z,t),z)=d_F(\exp^\perp(z,t),\p M)=t\},
\end{equation}
is the \textit{boundary cut distance} to $z$. Then we define the collection of \textit{boundary cut points} $\sigma(\p M)$ as
\begin{equation}
\sigma(\p M)=\{\exp^\perp(z,\tau_{\p M}(z)): z \in \p M\}.
\end{equation}
\end{Definition}

The set $\sigma(\p M)$ is not empty and the next lemma explains why we cannot use the coordinate structure given by Lemma~\ref{Le:boundary_coordinates} far from $\p M$. 

%

\begin{Lemma}
\label{Le:prop_boundary_cut}
Let $z \in \p M$ and $t_0 =\tau_{\p M}(z)$. Then at least one of the following holds:
\begin{enumerate}
\item The map $\exp^\perp$ is singular at $(z,t_0)$.
\item There exists $q \in \p M, \: q \neq z$ such that $ \exp^\perp(z,t_0)= \exp^\perp(q,t_0)$.
\end{enumerate}
Moreover for any $t \in [0,t_0)$ the map $\exp^\perp$ is non-singular at $(z,t)$.
\end{Lemma}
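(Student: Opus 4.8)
The plan is to argue by contradiction against the contrapositive: suppose that at $(z,t_0)$, with $t_0=\tau_{\p M}(z)$, the map $\exp^\perp$ is \emph{non}-singular \emph{and} there is no second boundary point $q\neq z$ with $\exp^\perp(z,t_0)=\exp^\perp(q,t_0)$. The goal is to show this forces the defining supremum in $\tau_{\p M}(z)$ to extend past $t_0$, contradicting its maximality. Write $\gamma(t)=\exp^\perp(z,t)$, the unit-speed boundary-normal geodesic, and recall from the definition of $\tau_{\p M}$ that for all $t<t_0$ we have $d_F(\gamma(t),z)=d_F(\gamma(t),\p M)=t$; by continuity of $d_F$ this persists at $t=t_0$, so $\gamma|_{[0,t_0]}$ is a distance minimizer both to $z$ and to $\p M$. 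The last sentence of the lemma (non-singularity of $\exp^\perp$ at $(z,t)$ for $t<t_0$) should be extracted first and separately: if $\exp^\perp$ were singular at some $(z,t_1)$ with $t_1<t_0$, then a transverse Jacobi / index-form argument (the Finsler analogue of the Jacobi theorem the authors advertise in the discussion of Section~\ref{Se:Appendix2}) shows the normal geodesic $\gamma$ cannot be a distance minimizer to $\p M$ beyond $t_1$, hence cannot be minimizing up to any $t$ with $t_1<t\le t_0$, contradicting $t_1<t_0=\tau_{\p M}(z)$.

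For the main dichotomy, under the contrary hypothesis the Inverse Function Theorem gives a neighborhood of $(z,t_0)$ in $\p M\times\R$ on which $\exp^\perp$ is a diffeomorphism onto an open set containing $p_0:=\gamma(t_0)$; and the no-second-point hypothesis, together with a compactness argument on $\p M$ exactly like the one in the proof of Lemma~\ref{Le:boundary_normal_coordinates}, upgrades this to: there is $\e>0$ and a neighborhood $V\ni p_0$ such that every $x\in V$ has a \emph{unique} representation $x=\exp^\perp(z(x),s(x))$ with $s(x)=d_F(x,\p M)$, the foot point $z(x)$ depending smoothly on $x$, and $s,z$ extending the boundary-normal coordinate chart across $p_0$. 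In particular for points $\gamma(t)$ with $t$ slightly larger than $t_0$ we still get $d_F(\gamma(t),\p M)=t$ realized by the normal geodesic through $z$, which would say $\tau_{\p M}(z)\ge t>t_0$ — contradiction.

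The care needed — and the step I expect to be the main obstacle — is making the compactness/uniqueness upgrade rigorous: knowing that $p_0$ has no \emph{second boundary foot point} at the single distance $t_0$ does not a priori prevent nearby points $x$ from acquiring competing near-foot points with nearly equal distance, unless one invokes that $\gamma|_{[0,t_0]}$ is still a \emph{strict} minimizer to $\p M$ in the relevant sense. This is where the analogue of the standard Riemannian fact "before the cut point there is a unique minimizing normal geodesic and no focal point" must be assembled from the Finslerian index-form machinery of Section~\ref{Se:Appendix2} and the quasi-reversibility estimate~\eqref{eq:quasi}; one runs the same contradiction-by-sequences argument as in Lemma~\ref{Le:boundary_normal_coordinates}, with sequences $x_j\to p_0$ carrying two distinct normal representations $\exp^\perp(z_j^1,s_j^1)=x_j=\exp^\perp(z_j^2,s_j^2)$, $s_j^i\to t_0$, and derives either a second foot point of $p_0$ at distance $t_0$ (excluded by hypothesis) or a failure of the local diffeomorphism at $(z,t_0)$ (also excluded). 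Once that local normal-coordinate chart is established across $p_0$, the contradiction with the maximality of $\tau_{\p M}(z)$ is immediate, and the "moreover" clause about $t<t_0$ is exactly the separate argument indicated above.
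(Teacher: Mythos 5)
Your plan is correct and follows essentially the same route as the paper. The dichotomy is established by a sequence–compactness argument on $\p M$ combined with the Inverse Function Theorem (your contrapositive framing is logically equivalent to the direct version in do Carmo, Chapter~13, Proposition~2.2, which the paper cites and modifies to the boundary-normal setting), and the \emph{moreover} clause is handled, exactly as you indicate, by the transverse-Jacobi-field and index-form machinery that the paper develops in Section~\ref{Se:Appendix2}, culminating in the inequality $\tau_{\p M}(z)\le\tau_f(z)$ between the boundary-cut distance and the focal distance.
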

\begin{proof}
The proof of the first claim is a modification of the proof of \cite[Chapter 13, Propostion 2.2]{do1992riemannian}.  
%
%
The proof of the last claim is  long. It is given in detail in Section~\ref{Se:Appendix2}.
\end{proof}

\begin{Lemma}
\label{Le:bcutlocus_func_cont}
The function $\tau_{\p M}\colon \p M \to \R$ is continuous. 
\end{Lemma}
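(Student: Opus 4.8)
The plan is to show continuity of $\tau_{\p M}$ by establishing lower and upper semicontinuity separately, both arguments resting on compactness of the unit sphere bundle, continuity of the geodesic flow, and the characterization of boundary cut points from Lemma~\ref{Le:prop_boundary_cut}. Throughout I will use the diffeomorphism property of $\exp^\perp$ on $\p M \times [0,r)$ from Lemma~\ref{Le:boundary_normal_coordinates}, which already gives $\tau_{\p M}(z) \geq r > 0$ uniformly, so the function is bounded below away from zero; it is bounded above by $h = \max\{d_F(x,\p M):x\in M\}+c$ since $\exp^\perp(z,t)\notin M$ for $t$ large. The key auxiliary fact is that $t_0 = \tau_{\p M}(z)$ is exactly the supremum of times for which $s \mapsto \exp^\perp(z,s)$ remains a distance minimizer to $\p M$, and that on $[0,t_0)$ the normal geodesic is both non-singular and strictly minimizing.

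First I would prove lower semicontinuity. Suppose $z_j \to z$ in $\p M$ with $\tau_{\p M}(z_j) \to a$; I want $a \geq \tau_{\p M}(z)$. Fix any $t < \tau_{\p M}(z)$. By Lemma~\ref{Le:prop_boundary_cut} the normal geodesic $\gamma_{z}$ restricted to $[0,t]$ is a strict minimizer: $d_F(\exp^\perp(z,t),\p M) = t$ and this minimum is attained only at $z$, and moreover $\exp^\perp$ is non-singular at $(z,s)$ for all $s \le t$. Both the minimizing property and the non-singularity are open conditions in the base point: the distance function $d_F(\exp^\perp(\cdot,t),\p M)$ is continuous, the foot point depends continuously on the base point away from singularities of $\exp^\perp$ (here one uses the first-variation/Gauss-lemma argument from Lemma~\ref{Le:normal_geo_is_mini}), and the Jacobian $D\exp^\perp$ is continuous. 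Hence for $j$ large $\gamma_{z_j}|_{[0,t]}$ is still minimizing to $\p M$ with $z_j$ the unique foot point and $\exp^\perp$ non-singular along it, which forces $\tau_{\p M}(z_j) \geq t$. Letting $j \to \infty$ gives $a \geq t$, and since $t < \tau_{\p M}(z)$ was arbitrary, $a \geq \tau_{\p M}(z)$.

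Next I would prove upper semicontinuity, which is where the subtlety lies. Suppose $z_j \to z$ and $\tau_{\p M}(z_j) \to a$; I want $a \leq \tau_{\p M}(z)$. At each $z_j$, Lemma~\ref{Le:prop_boundary_cut} gives that $(z_j, t_j)$ with $t_j = \tau_{\p M}(z_j)$ is either a singular point of $\exp^\perp$ or there is a second boundary point $q_j \neq z_j$ with $\exp^\perp(z_j,t_j) = \exp^\perp(q_j, t_j)$, i.e. $\exp^\perp(z_j,t_j)$ has (at least) two distinct feet on $\p M$ both at distance $t_j$. Pass to a subsequence so that $t_j \to a \in [r,h]$, $\exp^\perp(z_j,t_j) \to x_* := \exp^\perp(z,a)$ by continuity, and so that the first or second alternative holds along the whole subsequence. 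In the first case, $D\exp^\perp$ being continuous and $\det D\exp^\perp(z_j,t_j) = 0$ forces $\det D\exp^\perp(z,a)=0$, so $\exp^\perp$ is singular at $(z,a)$; by the last clause of Lemma~\ref{Le:prop_boundary_cut} (non-singularity strictly before the cut distance) this is impossible if $a < \tau_{\p M}(z)$, hence $a \leq \tau_{\p M}(z)$. In the second case, pass to a further subsequence so $q_j \to q \in \p M$; then $\exp^\perp(q,a) = x_*$ as well, and $d_F(x_*,\p M) = a$ is realized along both $\gamma_z$ and $\gamma_q$. If $q \neq z$, then $x_*$ has two distinct minimizing normal geodesics to $\p M$, which by the characterization of $\tau_{\p M}$ (a point strictly before the cut distance has a unique foot, again via Lemma~\ref{Le:normal_geo_is_mini}) forces $a \leq \tau_{\p M}(z)$. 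The remaining possibility is $q = z$: then the two distinct unit normal initial vectors at $z_j$ and $q_j$ — which are the initial velocities $\stackrel{\leftarrow}{\nu_{in}}(z_j)$ and $\stackrel{\leftarrow}{\nu_{in}}(q_j)$ — converge to the same vector $\stackrel{\leftarrow}{\nu_{in}}(z)$, yet $\exp^\perp(z_j, t_j) = \exp^\perp(q_j, t_j)$ with $z_j \neq q_j$; this is precisely the situation ruled out in the proof of Lemma~\ref{Le:boundary_normal_coordinates} near a point where $\exp^\perp$ is a local diffeomorphism, unless $(z,a)$ is itself a singular point of $\exp^\perp$ — which again by Lemma~\ref{Le:prop_boundary_cut} gives $a \le \tau_{\p M}(z)$. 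Combining, $a \leq \tau_{\p M}(z)$ in all cases.

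Taken together, lower and upper semicontinuity give continuity of $\tau_{\p M}$. The main obstacle is the upper semicontinuity argument, specifically the degenerate case $q = z$ in the second alternative: one must rule out that a sequence of "cut-by-overlap" points collapses onto a base point where $\exp^\perp$ is still injective and non-singular. I expect this to require extracting, via compactness of $SM$ and continuity of the geodesic flow $\stackrel{\leftarrow}{\phi}_t$, the precise limiting configuration and then invoking the local-diffeomorphism/injectivity argument already carried out in the proof of Lemma~\ref{Le:boundary_normal_coordinates}, together with the sharp statement in Lemma~\ref{Le:prop_boundary_cut} that $\exp^\perp$ is non-singular on $\p M \times [0,\tau_{\p M}(z))$. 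Once that case is handled, the rest is routine semicontinuity bookkeeping.
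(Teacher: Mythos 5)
Your overall plan — split into upper and lower semicontinuity and lean on the cut dichotomy of Lemma~\ref{Le:prop_boundary_cut}, as in the do Carmo / Klingenberg arguments that the paper itself cites — is the right idea, but the two directions have been switched, and one of them is actually never proved.

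The passage you label ``upper semicontinuity'' in fact establishes \emph{lower} semicontinuity. You fix a sequence $z_j\to z$ with $t_j=\tau_{\p M}(z_j)\to a$ and, in the singular alternative, conclude ``this is impossible if $a<\tau_{\p M}(z)$, hence $a\leq\tau_{\p M}(z)$.'' The impossibility of $a<\tau_{\p M}(z)$ gives $a\geq\tau_{\p M}(z)$, not $a\leq\tau_{\p M}(z)$. The same sign issue runs through the double-foot and degenerate $q=z$ cases: singularity of $\exp^\perp$ at $(z,a)$, or a second foot point for $\exp^\perp(z,a)$, forces $a\geq\tau_{\p M}(z)$ by the ``Moreover'' clause of Lemma~\ref{Le:prop_boundary_cut} (respectively by uniqueness of the foot point before the cut time), which is exactly $\liminf_j\tau_{\p M}(z_j)\geq\tau_{\p M}(z)$. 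So your dichotomy argument is essentially sound, it just lands on the other semicontinuity.

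Meanwhile the passage you label ``lower semicontinuity'' leans on the claim that ``the minimizing property and the non-singularity are open conditions in the base point.'' As stated this is not a proof: being a distance minimizer is a \emph{closed} condition under $C^0$-limits, not an open one, and the assertion that it persists under perturbation of $z$ is precisely the content of lower semicontinuity — so taken literally this reasoning is circular. One can rescue it (assume $\gamma_{z_j}|_{[0,t]}$ fails to minimize, extract a converging sequence of foot points $q_j\to q$, use uniqueness of the foot of $\exp^\perp(z,t)$ to force $q=z$, and then contradict local injectivity of $\exp^\perp$ at the non-singular point $(z,t)$), but this salvaged version is again the dichotomy/limit argument for \emph{lower} semicontinuity, now done a second time.

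The net effect is that \emph{upper} semicontinuity — $\limsup_j\tau_{\p M}(z_j)\leq\tau_{\p M}(z)$ — is never established anywhere in the proposal. The missing ingredient is elementary but distinct from the dichotomy: if $a>\tau_{\p M}(z)$ along a subsequence, then for large $j$ one has $\tau_{\p M}(z_j)>\tau_{\p M}(z)+\varepsilon$, so each $\gamma_{z_j}|_{[0,\tau_{\p M}(z)+\varepsilon]}$ is a minimizing curve from $\p M$; by continuity of $\exp^\perp$ and of the length functional (or directly by continuity of $d_F$ and of $(z,t)\mapsto\exp^\perp(z,t)$), the limit $\gamma_{z}|_{[0,\tau_{\p M}(z)+\varepsilon]}$ is also minimizing from $\p M$, contradicting the definition of $\tau_{\p M}(z)$ as a supremum. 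This is the half of the Klingenberg-type proof your proposal omits. Once you add it and relabel the dichotomy argument as lower semicontinuity, you recover exactly the proof the paper points to.
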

The proof of Lemma~\ref{Le:bcutlocus_func_cont} is completely analogous to Klingenberg's lemma presented in \cite[Lemma 2.1.15]{klingenberg} or  \cite[Chapter 13, Proposition 2.9]{do1992riemannian}. 

The \textit{cut distance function} of the extended manifold $(N,H)$ is defined as 
\begin{equation}
\label{eq:cut_dist_func}
\tau(x,v)=\sup \{t>0:d_H(x,{\gamma}_{x,v}(t))=t\}, \quad (x,v)\in TN, \quad  F(x,v)=1.
\end{equation}
We call a point $\gamma_{x,v}(\tau(x,v))$ \textit{an ordinary cut point} to $x$.
In the next Lemma we show that a boundary cut point always occurs before an ordinary cut point.

\begin{Lemma}
\label{Le:boundary_vs_normal_cut_dist}
For any $z \in \p M$ it holds that
\begin{equation}
\stackrel{\leftarrow}{\tau}(z,\stackrel{\leftarrow}{\nu}_{in}(z))>\tau_{\p M}(z),
\end{equation}
where $\stackrel{\leftarrow}{\tau}$ is the cut distance function of the reversed Finsler metric $\stackrel{\leftarrow}{H}$.
\end{Lemma}

\begin{proof}
The claim follows by applying Klingenberg's lemma to the geodesic $\gamma_{p,\xi}$, where $p=\gamma_{p,\xi}(\tau_{\p M}(z))$ and $\xi=-\dot\gamma_{p,\xi}(\tau_{\p M}(z))$ that hits normally to the boundary at the point $z$, see \cite[Lemma 2.13]{Katchalov2001} for details. 
\end{proof}

\begin{Corollary}
\label{cor:39}
Let $x_1 \in M^{int}$ and $z_{x_1} \in \p M$ be a closest boundary point to $x_1$.
There exist neighborhoods $U \subset M$ of $x_1$ and $V\subset\partial M$ of $z_{x_1}$ such that for every $(x,z)\in U\times V$ there exists the unique distance minimizing unit speed geodesic $\gamma_{x,z}$ from $x$ to $z$ and moreover $\gamma_{x,z}([0,d_F(x,z)) \subset M^{int}$.
Both the distance $d_F(x,z)$ and the geodesic $\gamma_{x,z}$ depend smoothly on $(x,z)\in U\times V$.
\end{Corollary}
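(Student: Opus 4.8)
The plan is to reduce the statement, via Lemma~\ref{Le:boundary_vs_normal_cut_dist}, to the standard fact that a distance minimizer realized strictly before the cut point is unique, free of conjugate points, and survives small perturbations of its endpoints. I give the argument for $x_1\in M^{int}$, which is the case used in the sequel; a boundary point $x_1\in\partial M$ is treated with the boundary normal coordinates of Corollary~\ref{Le:boundary_coordinates}. Throughout we work inside the closed extension $(N,H)$ of $(M,F)$.

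First I would locate $x_1$ before the cut locus. By Lemma~\ref{Le:normal_geo_is_mini} a unit speed $F$-minimizing geodesic $\gamma_1$ from $x_1$ to $z_{x_1}$ meets $\partial M$ normally, so its reversal satisfies $\stackrel{\leftarrow}{\gamma}_1(t)=\exp^\perp(z_{x_1},t)$ for $t\in[0,s]$, where $s:=d_F(x_1,\partial M)=d_F(x_1,z_{x_1})$. A short triangle inequality, as in the proof of Proposition~\ref{Pr:topology}, shows $d_F(\exp^\perp(z_{x_1},t),\p M)=d_F(\exp^\perp(z_{x_1},t),z_{x_1})=t$ for $0\le t\le s$, hence $\gamma_1\big([0,s)\big)\subset M^{int}$ and $s\le\tau_{\p M}(z_{x_1})$; Lemma~\ref{Le:boundary_vs_normal_cut_dist} then gives
\[
s \;<\; \stackrel{\leftarrow}{\tau}\big(z_{x_1},\stackrel{\leftarrow}{\nu}_{in}(z_{x_1})\big).
\]
Thus $x_1$ lies strictly before the first $\stackrel{\leftarrow}{H}$-cut point along $\stackrel{\leftarrow}{\gamma}_1$, and by the Finslerian versions of the classical cut locus theorems developed in Section~\ref{Se:Appendix2}, $\gamma_1$ is the unique $F$-minimizing geodesic from $x_1$ to $z_{x_1}$ and $x_1$ is not conjugate to $z_{x_1}$ along it.

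Next I would propagate this to a neighborhood. Choosing $t_1\in\big(s,\stackrel{\leftarrow}{\tau}(z_{x_1},\stackrel{\leftarrow}{\nu}_{in}(z_{x_1}))\big)$ and extending $\stackrel{\leftarrow}{\gamma}_1$ to $[0,t_1]$ --- still minimizing and conjugate point free --- a tube argument in $N$, i.e. the inverse function theorem applied to the endpoint map of short geodesics near $\stackrel{\leftarrow}{\gamma}_1$, exactly as in the Riemannian case (cf. \cite[Lemma~2.14]{Katchalov2001} and \cite[Ch.~13]{do1992riemannian}), yields neighborhoods $U_0$ of $x_1$ in $M^{int}$ and $V_0$ of $z_{x_1}$ in $N$ together with a smooth family of $H$-geodesics $\gamma_{x,z}$, $(x,z)\in U_0\times V_0$, each $C^0$-close to $\gamma_1$ and joining $x$ to $z$. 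Because $\gamma_1\big([0,s)\big)\subset M^{int}$ and $\gamma_1$ is transversal to $\partial M$ at $z_{x_1}$, after shrinking $U_0$ and $V_0$ the curve $\gamma_{x,z}$ meets $\partial M$ only at its endpoint for each $z\in\partial M\cap V_0$; hence $\gamma_{x,z}$ lies in $M$ with $\gamma_{x,z}\big([0,d_F(x,z))\big)\subset M^{int}$, and it is an $F$-minimizer in $M$ as soon as it is an $H$-minimizer in $N$, since then $d_{F}(x,z)=d_H(x,z)$ for these points.

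The remaining, and principal, difficulty is to show that $\gamma_{x,z}$ is indeed distance minimizing --- equivalently, the unique $H$-minimizer from $x$ to $z$ --- for $(x,z)$ near $(x_1,z_{x_1})$, that is, that minimality is not lost under the perturbation. I would argue by contradiction along a sequence $(x_j,z_j)\to(x_1,z_{x_1})$: competing curves of length $\le d_F(x_j,z_j)$ possess, by~\eqref{eq:quasi} and compactness of $SM$, a subsequence converging uniformly to an $F$-minimizer from $x_1$ to $z_{x_1}$, which by the uniqueness above is $\gamma_1$; continuity of the cut time function --- the Klingenberg argument behind Lemma~\ref{Le:bcutlocus_func_cont}, see also \cite[Ch.~13, Prop.~2.9]{do1992riemannian} --- then forces such competitors, and any second connecting geodesic, to coincide with $\gamma_{x_j,z_j}$ for $j$ large, a contradiction. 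Setting $U:=U_0$ and $V:=V_0$ then gives the asserted neighborhoods, with $\gamma_{x,z}$, reparametrized to unit speed, the unique distance minimizing geodesic from $x$ to $z$, lying in $M^{int}$ except for its terminal point $z$.
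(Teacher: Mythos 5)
Your argument is correct and follows essentially the same route as the paper's one-line proof, which cites exactly the ingredients you unfold: the Implicit Function Theorem, Lemma~\ref{Le:boundary_vs_normal_cut_dist} to place $x_1$ strictly before the ordinary cut point of $z_{x_1}$, Lemma~\ref{Le:cut_dist_func}, and transversality of $\stackrel{\leftarrow}{\nu}_{in}$. The only cosmetic difference is that what you present as the ``principal difficulty'' --- persistence of minimality under perturbation of the endpoints, which you settle by a sequential Klingenberg-style compactness argument --- is precisely the content of the continuity of the cut distance function $\tau$ stated in Lemma~\ref{Le:cut_dist_func}, which the paper invokes directly.
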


\begin{proof}
Recall that we have assumed the manifold $(M,F)$ being isometrically embedded in a closed Finsler manifold $(N,H)$.
By Lemmas ~\ref{Le:boundary_vs_normal_cut_dist} and \ref{Le:smoothness_of_the_distance_function} there are neighbourhoods $\tilde U \subset M^{int}$ of $x_1$ and $\tilde V\subset N$ of $z_1$ such that the distance function $d_H\colon \tilde U \times \tilde V\to \R$ is smooth.
Moreover for each $x \in \tilde U$ and $z \in \tilde V$ there exists a unique $H$-distance minimizing unit speed geodesic connecting $x$ to $z$.
This is denoted by $\gamma_{x,z}$. 

Let $y(x,z)\in S_xN$ be the initial velocity of the geodesic $\gamma_{x,z}$.
Since the exponential map is invertible at $(x_1,y(x_1,z_1))\in TN$ it holds that the map $(x,z)\mapsto y(x,z) \in SN$ is smooth in $\tilde U \times \tilde V$.
Therefore $\gamma_{x,z}$ depends smoothly on $(x,z)\in  \tilde U \times \tilde V$. 

As the geodesic between the reference points is normal to $\p M$ at $z_1$ the implicit function theorem yields that the function $t(x,y)$ of Notation \ref{De:good_set} is smooth in $SN$ near $y(x_1,z_1)$. This implies that
\begin{equation}
d_H(x,z)=t(x,y(x,z)),
\end{equation}
for any $z\in \p M$ near $z_1$. Since $H$ and $F$ agree on $TM$ it holds that $\gamma_{x,z}$, for any $z \in \p M$ near $z_1$, is also a geodesic of $(M,F)$ on the interval $[0,t(x,y)]$.  Therefore $\gamma_{x,z}[0,t(x,y(x,z)))\subset M^{int}$ and after possibly choosing smaller $U\subset \tilde U$ and $V\subset \tilde V \cap M$ we see that the $F$-distance and $H$-distances agree on $U \times V$. Clearly the map $(x,z)\mapsto y(x,z)$ is also smooth in this set.
This concludes the proof.
\end{proof}

We let $z \in \p M$ and define an evaluation function $E_z\colon \mathcal{R}(M) \to \R$ by $E_z(r)=r(z)$.
The functions $E_z$ correspond to the distance function
$d_F(\cdot,z)\colon M \to \R$ via the equation 
\begin{equation}
\label{eq:distance_func}
d_F(x,z)=(E_z \circ \mathcal{R})(x).
\end{equation}
Since $z \in \p M$ was an arbitrary point  we note that the function $d_F\colon M \times \p M\to \R$ is determined by the data~\eqref{eq:data} in the sense of~\eqref{eq:distance_func}.

\medskip

We define the \textit{exit time function}
\begin{equation}
\label{eq:exit_time_func}
\tau_{exit}\colon SM^{int} \to [0,\infty], \quad \tau_{exit}(x,v):=\inf\{t>0: \gamma_{x,v}(t) \in \p M\}.
\end{equation}
Comparing this to the function $t\colon TM\setminus0\to[0,\infty]$ defined in Notation~\ref{De:good_set}, we have $\tau_{exit}=t|_{SM^{int}}$.
The original data was formulated on the whole slit tangent bundle because it is independent of the Finsler metric, unlike the unit sphere bundle.

\begin{Lemma}
If $(x,v)\in SM^{int}$ is such that $\tau_{exit}(x,v)<\infty$ and $\dot{\gamma}_{x,v}(\tau_{exit}(x,v))$ is transversal to
$\partial M$ then there exists a neighborhood $U\subset SM$ of $(x,v)$ such that $\tau_{exit}|_{U}$ is well defined  and $C^\infty$-smooth.
\end{Lemma}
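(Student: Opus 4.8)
The plan is a textbook implicit function theorem argument applied to a boundary defining function evaluated along the geodesic flow, together with a compactness argument to identify the implicit solution with the first exit time. Throughout I work in the closed extension $(N,H)$ of $(M,F)$ introduced above, whose geodesic flow $\phi_t$ on $TN$ is complete; recall that $SM^{int}$ is open in $SN$, that geodesics of $H$ starting in $SM^{int}$ coincide with geodesics of $F$ as long as they stay in $M^{int}$, and write $\pi\colon TN\to N$ for the projection. Set $\tau_0:=\tau_{exit}(x,v)<\infty$ and $z_0:=\gamma_{x,v}(\tau_0)\in\p M$. Since $\p M$ is smooth, choose a neighborhood $W\subset N$ of $z_0$ and a smooth function $f\colon W\to\R$ with $\p M\cap W=f^{-1}(0)$, $M\cap W=\{f\geq 0\}\cap W$ and $df\neq 0$ on $W$; then $M^{int}\cap W=\{f>0\}\cap W$.

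First I would note that $\gamma_{x,v}([0,\tau_0))\subset M^{int}$: indeed $\gamma_{x,v}(0)=x\in M^{int}$, and if $\gamma_{x,v}(t_1)\notin M^{int}$ for some $t_1\in(0,\tau_0)$ then, since a continuous path in $N$ from a point of the open set $M^{int}$ to a point of $\p M\cup(N\setminus\overline M)$ must meet $\p M$, one gets a boundary hit at some time in $(0,\tau_0)$, contradicting the definition of $\tau_{exit}$. In particular $f\circ\gamma_{x,v}>0$ on an interval $[\tau_0-\epsilon_0,\tau_0)$ and $f(\gamma_{x,v}(\tau_0))=0$, so $\frac{d}{dt}(f\circ\gamma_{x,v})(\tau_0)\leq 0$; it is also nonzero because $\dot\gamma_{x,v}(\tau_0)$ is transversal to $\p M=\ker df_{z_0}$, hence $\frac{d}{dt}(f\circ\gamma_{x,v})(\tau_0)<0$. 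Now put $G(x',v',t):=f(\pi(\phi_t(x',v')))$, a smooth function of $(x',v',t)$ near $(x,v,\tau_0)$, with $G(x,v,\tau_0)=0$ and $\p_t G(x',v',t)=df(\dot\gamma_{x',v'}(t))$, so $\p_t G(x,v,\tau_0)<0$. The implicit function theorem gives a neighborhood $U_0\subset SN$ of $(x,v)$ and a $C^\infty$ function $\tau\colon U_0\to\R$ with $\tau(x,v)=\tau_0$ and $G(x',v',\tau(x',v'))=0$ for all $(x',v')\in U_0$.

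It remains to shrink $U_0$ to a neighborhood $U$ on which $\tau=\tau_{exit}$. Fix $\epsilon\in(0,\epsilon_0)$ small enough that $\gamma_{x,v}([\tau_0-\epsilon,\tau_0+\epsilon])\subset W$ and $\p_t(f\circ\gamma_{x,v})<0$ on $[\tau_0-\epsilon,\tau_0+\epsilon]$; then $f\circ\gamma_{x,v}$ is positive at $\tau_0-\epsilon$ and negative at $\tau_0+\epsilon$. The compact arc $\gamma_{x,v}([0,\tau_0-\epsilon])$ lies in the open set $M^{int}$, hence at positive distance from $\p M$, so by continuity of $\phi_t$ on the compact interval $[0,\tau_0-\epsilon]$ there is a neighborhood $U\subset U_0$ of $(x,v)$ such that for $(x',v')\in U$ one has $\gamma_{x',v'}([0,\tau_0-\epsilon])\subset M^{int}$, $\gamma_{x',v'}([\tau_0-\epsilon,\tau_0+\epsilon])\subset W$, $\p_t(f\circ\gamma_{x',v'})<0$ on $[\tau_0-\epsilon,\tau_0+\epsilon]$, and $f\circ\gamma_{x',v'}$ has opposite signs at the two endpoints. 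Hence $f\circ\gamma_{x',v'}$ has a unique zero in $(\tau_0-\epsilon,\tau_0+\epsilon)$, which must be $\tau(x',v')$ by uniqueness in the implicit function theorem, and $\gamma_{x',v'}((\tau_0-\epsilon,\tau(x',v')))\subset\{f>0\}\cap W\subset M^{int}$. Combining ranges, $\gamma_{x',v'}([0,\tau(x',v')))\subset M^{int}$ and $\gamma_{x',v'}(\tau(x',v'))\in\p M$, so $\tau_{exit}(x',v')=\tau(x',v')<\infty$. Therefore $\tau_{exit}|_U=\tau|_U$ is well defined and $C^\infty$-smooth. The one point requiring care — the only real ``obstacle'' — is precisely this last identification: making sure the implicit solution is the \emph{first} boundary hit and not a later return, which is exactly what the compactness estimate on $[0,\tau_0-\epsilon]$ plus the strict monotonicity of $f\circ\gamma$ near $\tau_0$ provide; the rest is a routine transversality computation. \proofbox
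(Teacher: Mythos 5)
Your proof is correct and follows essentially the same route as the paper's: the paper's one-line proof simply says the claim follows from the Implicit Function Theorem in boundary coordinates, using transversality of $\dot\gamma_{x,v}(\tau_{exit}(x,v))$ to $\p M$. You have supplied the details the paper leaves implicit, including the compactness argument identifying the implicit solution with the \emph{first} exit time, which is a genuine point the paper glosses over but does not change the approach.
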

\begin{proof}
Since  $\dot{\gamma}_{x,v}(t_0)$ is not tangential to $\p M$ the claim follows from the 
Implicit Function Theorem in boundary coordinates. 
\end{proof}

Take an interior point $x\in M$ near which we want to construct a
system of coordinates. We let $v\in S_xM$ be such that the geodesic
$\gamma_{x,v}$ emanating from $x$ to the direction $v$ is the shortest
curve between $x$ and a terminal boundary point $z_x$. By Lemma~\ref{Le:boundary_vs_normal_cut_dist} these two points are not conjugate along $\gamma_{x,v}$.   

We let $U \subset SM$ be so small neighborhood of $(x,v)$ that the exit time function $\tau_{exit}\colon  U \to \R$ is defined and smooth. We have thus assumed that $x$ and
$z_x=\gamma_{x,v}(\tau_{exit}(x,v))$ are connected minimally and without conjugate points
by $\gamma_{x,v}$.

We let $\ell_x\colon T_xM\to T_x^*M$ be the Legendre
transform, (to recall the definition see~\eqref{eq:Legendre} in the appendix). It and its inverse are smooth outside the origin. Thus the distance function $d_F(\cdot,z_x)$  is smooth near $x$ and its differential at $x$ is $\ell_x(v)\in T_x^*M$ (see Lemma~\ref{Le:differential_of_dist_func}).

Pick any $u\in T_x^*M\setminus \{0\}$ with $\langle u,v\rangle=0$. For
$s\in\mathbb R$, denote 
\begin{equation}
v_s=\frac{\ell_x^{-1}(\ell_x(v)+su)}{F^\ast(\ell_x(v)+su)}.
\end{equation}
Here $F^\ast$ is the dual of $F$, (see~\eqref{eq:F_dual}). The map $s\mapsto v_s\in S_xM$ is smooth.

Consider the geodesics $\gamma_{v_s}$ starting at $x$ in the direction
$v_s$. Since $\gamma_{x,v}(\tau_{exit}(x,v))$ is transversal to $\partial M$, then
$s\mapsto\gamma_{v_s}(\tau_{exit}(x,v_s))$ is smooth near $s=0$. Also since $x$ is not an ordinary cut point to $\gamma_{v_s}(\tau_{exit}(x,v_s))$ at $s=0$, it is not
either an ordinary cut point to $\gamma_{v_s}(\tau_{exit}(x,v_s))$ when $\lvert s\rvert$ is small. Therefore, for $s$
sufficiently close to zero the distance function to
$\gamma_{v_s}(\tau_{exit}(x,v_s))$ is smooth near $x$.

The differential of the distance function 
at $x$ amounts to
\begin{equation}
\ell_x(v_s)=\frac{\ell_x(v)+su}{F^\ast(\ell_x(v)+su))}.
\end{equation}
Therefore, for any $u$ with the required property there is a small non-zero $s$ so that there is a distance function to a boundary point which is smooth near $x$ and the differential at $x$
is $\frac{\ell_x(v)+su}{F^\ast(\ell_x(v)+su)}$.

We take $n-1$ covectors $u_1,\dots,u_{n-1} \in T^\ast_x M$ so that the set
\begin{equation}
\{\ell_x(v),u_1,\dots,u_{n-1}\}\subset T_x^*M
\end{equation}
is linearly independent
and each $u_i\in T_x^*M$ is orthogonal to $v\in T_xM$. For each
$i=1,\dots,n-1$ we take $s_i\neq0$ so that 
\begin{equation}
\frac{\ell_x(v)+s_iu_i}{F^\ast(\ell_x(v)+s_iu_i)}
\end{equation}
 is the differential of a distance function to a boundary point as described
above.

This gives rise to distance functions to $n$ boundary points close to
one another. These functions are smooth near $x$ and the differentials
are 
\begin{equation}
\ell_x(v),\frac{\ell_x(v)+s_1u_1}{F^\ast(\ell_x(v)+s_1u_1)}, \ldots, \frac{\ell_x(v)+s_{n-1}u_{n-1}}{F^\ast(\ell_x(v)+s_{n-1}u_{n-1})}.
\end{equation} 
This set is linearly independent, so the distance functions give a smooth system of
coordinates in a neighborhood of $x$. Thus we obtain

\begin{Lemma}
\label{Le:interior_coordinates}
Let $x_0 \in M^{int}$. There is a
neighborhood $U$ of $x_0$ and points $z_1,\ldots,z_n \in \p M$, where $z_1$ is a closest boundary point to $x_0$, so that the mapping $U \ni  x \mapsto (d_F(x,z_i))_{i=1}^n$ is a smooth coordinate map. 

Moreover there exists an open neighborhood $V \subset \p M$ of $z_1$ such that the distance function $d_F\colon U\times V\to \R$ is smooth and the set
\begin{equation}
\mathcal{V}:=
\bigg\{(z_i)_{i=2}^n \in V^{n-1}:  \det (f_{z_2,\ldots, z_n}(x))\bigg|_{x=x_0}\neq 0\bigg\}.
\end{equation}
is open and dense in $V^{n-1}:=V\times \cdots \times V$. Where 
\begin{equation}
\label{eq:determinant_of _distances}
f_{z_2,\ldots, z_n}(x):=D\widetilde f_{z_2,\ldots, z_n}(x),
\end{equation}
and $D\widetilde f_{z_2,\ldots, z_n}$ stands for the pushforward of the map 
\begin{equation}
\widetilde f_{z_2,\ldots, z_n}(x):=(d_F(x,z_i))_{i=1}^n)\in \R^n, \quad x \in U.
\end{equation}
\end{Lemma}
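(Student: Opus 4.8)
The plan is to dispatch the three assertions in order, the last being the substantive one. The existence of the chart is in effect already contained in the construction carried out just before the lemma: take $z_1\in\p M$ a closest boundary point to $x_0$, let $v\in S_{x_0}M$ be the direction of the minimizing geodesic $\gamma_{x_0,v}$ to $z_1$ (unique, since it must meet $\p M$ orthogonally at $z_1$ by Lemma~\ref{Le:normal_geo_is_mini}, hence coincides with the reversed normal geodesic of $z_1$), and let $z_2,\dots,z_n$ be the boundary points $\gamma_{v_{s_i}}(\tau_{exit}(x_0,v_{s_i}))$ obtained there from covectors $u_1,\dots,u_{n-1}$ completing $\ell_{x_0}(v)$ to a basis of $T_{x_0}^\ast M$; by the choice of the $s_i$ and Lemma~\ref{Le:boundary_vs_normal_cut_dist}, $x_0$ lies strictly before the ordinary cut point along each of these geodesics, so no conjugate points occur, the functions $d_F(\cdot,z_i)$ are smooth near $x_0$ with linearly independent differentials $\ell_{x_0}(v)$ and $(\ell_{x_0}(v)+s_iu_i)/F^\ast(\ell_{x_0}(v)+s_iu_i)$ at $x_0$, and the inverse function theorem yields the chart on a neighborhood $U$ of $x_0$. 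For the second assertion one shrinks $U$ and fixes a small neighborhood $V\subset\p M$ of $z_1$ so that, by the corollary preceding this lemma (whose proof via the implicit function theorem also gives smooth dependence on the endpoints), every $(x,z)\in U\times V$ is joined by a unique minimizing geodesic lying in $M^{int}$ and varying smoothly with $(x,z)$; combined with the absence of conjugate points and Lemma~\ref{Le:differential_of_dist_func} this yields $d_F\in C^\infty(U\times V)$. It remains to prove that $\mathcal V$ is open and dense in $V^{n-1}$.

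For $z\in V$ write $\beta(z)\in S_{x_0}M$ for the initial direction of the minimizing geodesic from $x_0$ to $z$. By Lemma~\ref{Le:differential_of_dist_func} the rows of the matrix $f_{z_2,\dots,z_n}(x_0)$ are the covectors $\ell_{x_0}(\beta(z_i))$, $i=1,\dots,n$ (with $z_1$ fixed), and these depend continuously on $(z_2,\dots,z_n)$ since $d_F\in C^\infty(U\times V)$; hence $(z_2,\dots,z_n)\mapsto\det f_{z_2,\dots,z_n}(x_0)$ is continuous and $\mathcal V=\{\det\ne0\}$ is open. It is nonempty, since the tuple $(z_2,\dots,z_n)$ of the first paragraph already lies in it.

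For density, set $w_1:=\ell_{x_0}(v)$ and $\alpha:=\ell_{x_0}\circ\beta\colon V\to S_{x_0}^\ast M$. The key step is that, after shrinking $V$ further, $\alpha$ is a diffeomorphism onto an open neighborhood $W$ of $w_1$ in $S_{x_0}^\ast M$. Indeed, writing $h:=d_F(x_0,z_1)$, the absence of conjugate points along $\gamma_{x_0,v}$ (Lemma~\ref{Le:boundary_vs_normal_cut_dist}) makes $\exp_{x_0}$ a diffeomorphism from a neighborhood of $hv$ onto a neighborhood of $z_1$, so $\Sigma:=\exp_{x_0}^{-1}(V)$ is an embedded hypersurface through $hv$; since $(d\exp_{x_0})_{hv}(hv)=h\dot\gamma_{x_0,v}(h)$, which is the outward normal of $\p M$ at $z_1$ and therefore not tangent to $\p M$, we get $hv\notin T_{hv}\Sigma$, i.e.\ $\Sigma$ is transverse to the ray through $hv$. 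Hence the radial projection $w\mapsto w/F(x_0,w)$ restricts to a local diffeomorphism of $\Sigma$ at $hv$, so $\beta$ (this projection precomposed with $\exp_{x_0}^{-1}|_V$) is a local diffeomorphism at $z_1$; composing with the diffeomorphism $\ell_{x_0}\colon S_{x_0}M\to S_{x_0}^\ast M$ and shrinking $V$ makes $\alpha\colon V\to W$ a diffeomorphism.

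Through the diffeomorphism $(z_2,\dots,z_n)\mapsto(\alpha(z_2),\dots,\alpha(z_n))$ of $V^{n-1}$ onto $W^{n-1}$, the set $\mathcal V$ corresponds to $\{(q_2,\dots,q_n)\in W^{n-1}:\det[\,w_1,q_2,\dots,q_n\,]\ne0\}$, so it suffices to prove the latter dense in $W^{n-1}$. Given $(q_2,\dots,q_n)\in W^{n-1}$ and $\epsilon>0$, one constructs $q_2',\dots,q_n'$ with $|q_i'-q_i|<\epsilon$ and $\{w_1,q_2',\dots,q_n'\}$ linearly independent, inductively: if $w_1,q_2',\dots,q_k'$ (with $k\le n-1$) are already independent with $k$-dimensional span $P_k$, then $P_k$ meets the interior of the unit co-ball, so $S_{x_0}^\ast M\cap P_k$ is the boundary of the convex body $P_k\cap\{F^\ast(x_0,\cdot)\le1\}$, of dimension $k-1<n-1$; thus $W\cap P_k$ is nowhere dense in $W$ and one can pick $q_{k+1}'\in W\setminus P_k$ with $|q_{k+1}'-q_{k+1}|<\epsilon$. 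This gives density and completes the proof. The main obstacle is the diffeomorphism statement for $\alpha$, where the Finslerian features really enter: the non-linearity of $\ell_{x_0}$, the Gauss-type identity $(d\exp_{x_0})_{hv}(hv)=h\dot\gamma_{x_0,v}(h)$, and the orthogonality of the shortest geodesic at a closest boundary point. Note also that because $F$ is merely smooth in Theorem~\ref{Th:smooth}, $S_{x_0}^\ast M$ need not be real analytic, which is why density must be argued by the dimension count above rather than via analyticity of the vanishing locus of the determinant.
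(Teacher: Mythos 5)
Your proof is correct, and on the one substantive point of the lemma --- density of $\mathcal V$ --- it takes a genuinely different route from the paper. Both arguments obtain openness from continuity of the determinant, and both reduce density to a statement about the unit cosphere $S_{x_0}^\ast M$ via the map $z\mapsto d(d_F(\cdot,z))|_{x_0}$; you actually justify that this map is a local diffeomorphism of $V$ onto a neighbourhood of $\ell_{x_0}(v)$ in $S_{x_0}^\ast M$ (transversality of the radial direction to $\exp_{x_0}^{-1}(V)$, using Lemma~\ref{Le:normal_geo_is_mini} and the absence of conjugate points from Lemma~\ref{Le:boundary_vs_normal_cut_dist}), whereas the paper only asserts it is a smooth embedding. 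From there the paper argues by real analyticity: the determinant $T$ is a nonzero polynomial on $(T_{x_0}^\ast M)^{n-1}$, so by \cite[Lemma 4.3]{helgason2001differential} its zero set is nowhere dense in the ambient product of cotangent spaces, and this is then transferred to $V^{n-1}$ by appealing to multilinearity and the embedding. You instead run an elementary inductive perturbation inside the cosphere itself: a $k$-dimensional linear subspace meets the strictly convex hypersurface $S_{x_0}^\ast M$ in the boundary of a $k$-dimensional convex body, a set of topological dimension $k-1<n-1$, hence nowhere dense in $S_{x_0}^\ast M$, so each $q_{k+1}$ can be pushed off the span of the previously chosen covectors by an arbitrarily small amount. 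Your route buys two things: it needs no analyticity anywhere (as you correctly observe, $S_{x_0}^\ast M$ is merely smooth, and nowhere-density of $T^{-1}\{0\}$ in the ambient vector space does not by itself yield nowhere-density of its trace on the product of cospheres --- precisely the step the paper compresses into the phrase about multilinearity), and it is self-contained; the paper's route is shorter once Helgason's lemma is granted. The first two assertions of the lemma are handled by both proofs in the same way, by reference to the construction preceding the lemma.
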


\begin{proof}
It remains to show that the set $\mathcal{V}$ is open and dense in $V^{n-1}$. 
Clearly the function 
\begin{equation}
G:V^{n-1} \to \R, \quad G(z_2,\ldots,z_n)=\det(f_{z_2,\ldots, z_n}(x_0))
\end{equation}
is continuous. Thus $\mathcal{V}=V^{n-1}\setminus G^{-1}\{0\}$ is open. Since the Legendre transform is an metric isometry between fibers, we have for every $z \in V$ 
\begin{equation}
d(d_F(\cdot,z))|_{x_0} \in S_{x_0}^\ast M:=\{p\in T_{x_0}^\ast M: F^\ast(p)=1\}
.
\end{equation}
(For the details, see Lemma~\ref{Le:differential_of_dist_func} in the appendix.)
We let $(e_i)_{i=1}^n$ be a basis of $T_{x_0}^\ast M$ and define a map $T\colon ( T_{x_0}^\ast M)^{n-1} \to \R$ by
\begin{equation}
T((u_i))_{i=2}^{n} =\det(M(\ell_{x_0}(v), u_2,\ldots, u_{n})),
\end{equation} 
where $\ell_{x_0}(v)$ is as in the discussion before this lemma and $M(\ell_{x_0}(v), u_1,\ldots, u_{n-1})$ is a real $n\times n$ matrix with columns $\ell_{x_0}(v), u_1,\ldots, u_{n-1}$, with respect to basis $(e_i)_{i=1}^n$ of $T_{x_0}^\ast M$. Notice that $(T_{x_0}^\ast M)^{n-1} $ is a real analytic manifold and $T$ is a multivariable polynomial, and thus a real analytic function. Moreover by the discussion before this lemma we know that $T$ is not identically zero. Therefore, it follows from  \cite[Lemma 4.3]{helgason2001differential} that $T^{-1}\{0\}\subset ( T_{x_0}^\ast M)^{n-1} $ is nowhere dense. Since  determinant is multilinear and the map $V\ni z\mapsto d(d_F(\cdot,z))|_{{x_0}} \in S_{x_0}^\ast M$ is a smooth embedding, it follows that $\mathcal{V} \subset V^{n-1}$ is dense.
\end{proof}

Now we are ready to prove the main theorem.

\begin{Theorem}
\label{Th:diffeo}
The mapping $\Psi$ given in~\eqref{eq:map_Psi} is a diffeomorphism.
\end{Theorem}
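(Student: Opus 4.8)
The plan is to show $\Psi$ is a local diffeomorphism at every point of $M_1$ and then conclude, since we already know from Theorem~\ref{Th:topology} that $\Psi$ is a homeomorphism (a bijective local diffeomorphism between manifolds is a diffeomorphism). We split into two regimes exactly as the preparatory lemmas suggest: points near $\partial M_1$ and interior points far from the boundary.

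\emph{Near the boundary.} For $x_1 \in U(\p M_1,\epsilon)$ we use the boundary normal coordinates of Corollary~\ref{Le:boundary_coordinates}, which express $x_1$ via $(z(x_1),s(x_1))$ with $s(x_1)=d_{F_1}(x_1,\p M_1)$. The key point is that these quantities are recovered from the boundary distance data: $s(x_1)=\min_{z\in\p M_1} r_{x_1}(z)$ is determined by $\mathcal R_1(x_1)$, and the minimizing boundary point $z(x_1)$ is characterized intrinsically (Lemma~\ref{Le:normal_geo_is_mini}). Since $\Phi$ simply precomposes with $\phi^{-1}$, it carries $r_{x_1}$ to $r_{x_2}$ with $\Psi(x_1)=x_2$, and one checks that $s_2(x_2)=s_1(x_1)$ and $z_2(x_2)=\phi(z_1(x_1))$. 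Thus in the boundary normal coordinates of $M_1$ near $x_1$ and of $M_2$ near $x_2$ the map $\Psi$ reads $(z,s)\mapsto(\phi(z),s)$, which is smooth with smooth inverse because $\phi$ is a diffeomorphism of the boundaries. Hence $\Psi$ is a diffeomorphism on $U(\p M_1,\epsilon)$, agreeing with $\phi$ on $\p M_1$.

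\emph{Far from the boundary.} For $x_0\in M_1^{int}$, Lemma~\ref{Le:interior_coordinates} produces boundary points $z_1,\dots,z_n\in\p M_1$ (with $z_1$ a closest boundary point) and a neighborhood $U$ of $x_0$ so that $x\mapsto(d_{F_1}(x,z_i))_{i=1}^n$ is a smooth chart. By~\eqref{eq:distance_func}, $d_{F_1}(x,z_i)=(E_{z_i}\circ\mathcal R_1)(x)$ and $d_{F_2}(\Psi(x),\phi(z_i))=(E_{\phi(z_i)}\circ\mathcal R_2)(\Psi(x))=(E_{\phi(z_i)}\circ\Phi\circ\mathcal R_1)(x)=(E_{z_i}\circ\mathcal R_1)(x)$, so the two distance functions coincide as functions on $U$. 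Now I must check that $x\mapsto(d_{F_2}(x,\phi(z_i)))_{i=1}^n$ is also a smooth chart near $\Psi(x_0)$: since $\phi(z_1)$ is a closest boundary point to $\Psi(x_0)$ (established in the boundary analysis, using that $\Psi$ sends closest boundary points to closest boundary points, which follows from $r_{\Psi(x_0)}\circ\phi = r_{x_0}$), and since $z_1,\dots,z_n$ were chosen so that the differentials $d(d_{F_1}(\cdot,z_i))|_{x_0}$ are linearly independent with each $d_{F_1}(\cdot,z_i)$ smooth near $x_0$ — but $d_{F_1}(\cdot,z_i)=d_{F_2}(\Psi(\cdot),\phi(z_i))$ near $x_0$, so $d_{F_2}(\cdot,\phi(z_i))$ is smooth near $\Psi(x_0)$ and its differential there pulls back under $\Psi$ to a linearly independent set. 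Consequently in these two charts $\Psi$ is the identity map $\R^n\to\R^n$ restricted to open sets, hence a diffeomorphism near $x_0$.

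\emph{Conclusion and main obstacle.} Combining the two regimes, $\Psi$ is a local diffeomorphism everywhere on $M_1$; being a homeomorphism by Theorem~\ref{Th:topology}, it is a global diffeomorphism with $\Psi|_{\p M_1}=\phi$. The main obstacle is the bookkeeping at the interior points: one must verify that the \emph{same} $n$ boundary points $z_1,\dots,z_n$ that yield a chart near $x_0$ in $M_1$ also yield a chart near $\Psi(x_0)$ in $M_2$ — this requires knowing that smoothness of $d_{F_1}(\cdot,z_i)$ near $x_0$ transfers to smoothness of $d_{F_2}(\cdot,\phi(z_i))$ near $\Psi(x_0)$, which is exactly the content of the equality $d_{F_1}(\cdot,z_i)=d_{F_2}(\Psi(\cdot),\phi(z_i))$ together with the homeomorphism property of $\Psi$, and that the non-degeneracy of the differentials (the determinant condition defining the dense set $\mathcal V$ in Lemma~\ref{Le:interior_coordinates}) is a symmetric condition that survives the transfer. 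The near-boundary case is essentially immediate once the closest-boundary-point bookkeeping is in place.
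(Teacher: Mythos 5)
Your overall structure — splitting into a near-boundary regime handled via boundary normal coordinates and an interior regime handled via distance-function coordinates, then invoking Theorem~\ref{Th:topology} to upgrade a local diffeomorphism to a global one — is exactly the paper's. The boundary case is essentially fine, modulo minor bookkeeping about choosing $\epsilon$ small enough for both manifolds simultaneously (which the paper handles by working with the set of points with a unique closest boundary point and its image under $\Psi$).

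However, the interior case has a genuine gap, and it is precisely at the step you yourself flag as the ``main obstacle.'' You claim that smoothness of $d_{F_1}(\cdot,z_i)$ near $x_0$ transfers to smoothness of $d_{F_2}(\cdot,\phi(z_i))$ near $\Psi(x_0)$ via ``the equality $d_{F_1}(\cdot,z_i)=d_{F_2}(\Psi(\cdot),\phi(z_i))$ together with the homeomorphism property of $\Psi$.'' This does not follow: a homeomorphism does not transfer smoothness, and writing $d_{F_2}(\cdot,\phi(z_i)) = d_{F_1}(\Psi^{-1}(\cdot),z_i)$ expresses the $M_2$-side function as a smooth function precomposed with $\Psi^{-1}$, which we do not yet know to be smooth. (Think of $\Psi(x)=x^3$ and $f(x)=x$: then $f$ is smooth but $f\circ\Psi^{-1}(y)=y^{1/3}$ is not.) So the claim that the \emph{same} $z_2,\dots,z_n$ automatically give a coordinate chart near $\Psi(x_0)$ is unjustified as you state it. The same objection applies to the ``non-degeneracy of the differentials survives the transfer'': without knowing $\Psi$ is $C^1$, one cannot pull back covectors and compare determinants.

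The paper closes this gap differently, and this is in fact the reason Lemma~\ref{Le:interior_coordinates} records not just existence of a chart but that the set $\mathcal V$ of admissible tuples $(z_2,\dots,z_n)$ is open and \emph{dense}. One applies Lemma~\ref{Le:interior_coordinates} independently to $(M_1,F_1)$ at $x_0$ and to $(M_2,F_2)$ at $\Psi(x_0)$ (noting, from the agreement of boundary distance data, that $\phi(z_1)$ is a closest boundary point to $\Psi(x_0)$). Each application gives a dense open set of good tuples, one in a product of neighborhoods in $\p M_1$ and one in $\p M_2$; transporting the latter by the diffeomorphism $\phi^{-1}$ and intersecting two dense open sets produces a nonempty set of tuples $(z_2,\dots,z_n)$ that are simultaneously good for $M_1$ and, after applying $\phi$, good for $M_2$. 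Only then is the local coordinate representation of $\Psi$ the identity map on an open subset of $\R^n$. If you rework your interior case to invoke the density statement in this two-sided way, rather than trying to transport smoothness across $\Psi$ directly, the argument becomes correct and coincides with the paper's proof.
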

\begin{proof} We let $x_0 \in M_1$. 
We suppose first that $x_0$ is an interior point. By the data~\eqref{eq:BDD_agree_1} it holds that $z \in \p M_1$ is a minimizer of $d_{F_1}(x_0,\cdot)|_{\p M_1}$ if and only if $\phi(z)\in \p M_2$ is a minimizer of $d_{F_2}(\Psi(x_0),\cdot)|_{\p M_2}$. We let $z_1$ be a minimizer of $d_{F_1}(x_0,\cdot)|_{\p M_1}$. Since the map $\phi\colon \p M_1 \to \p M_2$ is a diffeomorphism it follows from the  Lemma~\ref{Le:interior_coordinates} that there exist points $z_2,\ldots,z_{n}\in \p M_1$ and a (suitably shrunk) neighborhood $U\subset M_1$ of $x_0$ such that the maps $U \ni x_1 \mapsto (d_{F_1}(x_1,z_i))_{i=1}^n$ and 
\begin{equation}
\Psi(U) \ni x_2 \mapsto (d_{F_2}(x_2,\phi(z_i)))_{i=1}^n=(d_{F_1}(\Psi^{-1}(x_2),z_i))_{i=1}^n
\end{equation}  
are smooth coordinate maps. Thus with respect to these coordinates it holds that the local representation of~$\Psi$ near~$x_0$ is an identity map of~$\R^n$. This proves that~$\Psi$ is a local diffeomorphism near any interior point of~$M_1$.

\medskip
We consider next the boundary case. We denote 
\begin{equation}
U=\{x_1 \in M_1: \hbox{ there exists precisely one minimizer for } d_{F_1}(x_1,\cdot)|_{\p M_1}\}^{int}.
\end{equation}
By~\eqref{eq:BDD_agree_1} and since $\Psi$ is a homeomorphism, it holds that
\begin{equation}
\label{eq:unique_boundary_distance_minimizers}
\Psi(U)=\{x_2 \in M_2: \hbox{ there exists precisely one minimizer for } d_{F_2}(x_2,\cdot)|_{\p M_2}\}^{int}.
\end{equation}
By Lemma~\ref{Le:boundary_normal_coordinates}  it follows that there exists $\epsilon>0$ and open neighborhood $V\subset U \subset M_1$ of $\p M_1$ such that the maps 
\begin{equation}
\begin{split}
&\: \exp^\perp_{F_1}\colon \p M_1 \times [0,\epsilon) \to V, \quad \hbox{and}
\\
& \: \exp^\perp_{F_2}\colon \p M_2 \times [0,\epsilon) \to \Psi(V) 
\end{split}
\end{equation}
are diffeomorphisms. Moreover due to Lemma~\ref{Le:boundary_coordinates} for every $x \in V$ it holds that $x=\exp^\perp_{F_1}(z(x),s(x))$, where $z(x)$ is the minimizer of $d_{F_1}(x, \cdot)|_{\p M_1}$ and $s(x)=d_{F_1}(x, z(x))$. Therefore, by~\eqref{eq:unique_boundary_distance_minimizers} it holds that 
\begin{equation}
( \exp^\perp_{F_2})^{-1}(\Psi(p))
=(\phi(z(p)), s(p)).
\end{equation}
Thus we have proved that with respect to coordinates $(V,( \exp^\perp_{F_1})^{-1})$ and
\\
$(\Psi(V),( \exp^\perp_{F_2})^{-1})$ the local representation of $\Psi$ is 
\begin{equation}
(\p M_1 \times [0,\epsilon))\ni(z,s)\mapsto (\phi(z),s) \in (\p M_2 \times [0,\epsilon)).
\end{equation}
Since $\phi\colon \p M_1 \to \p M_2$ is a diffeomorphism we have proved that $\Psi$ is a local diffeomorphism near $\p M_1$.

\medskip 
By Theorem~\ref{Th:topology} the map $\Psi$ is one-to-one and we have proved that $\Psi$ is a diffeomorphism. 
\end{proof}

\subsection{Smoothness of the boundary distance function}

Here we consider the smoothness of a boundary distance function and show that the closure of
\begin{equation}
\label{eq:good_set}
\widehat{G}(M,F):=\{ (x,y)\in G(M,F): x \in M^{int}, \: d_F(\cdot,z(x,y)) \hbox{ is $C^\infty$ at } x\} \cup \p_{out} T M ,
\end{equation}
where $\p_{out} T M \subset \p( TM)$ is the collection of outward pointing vectors (excluding tangential ones),
coincides with the set $\overline{G(M,F)}$ (see Definition~\ref{De:good_set}). This will be used in the next subsection to reconstruct $F$ in $G(M,F)$.

\medskip
We note that if $(x,y)\in G(M,F)$, with $F(y)=1$  then 
\begin{equation}
\tau_{exit}(x,y)=t(x,y),
\end{equation}
where $t(x,y)$ is as in Definition~\ref{De:good_set}. We assume below in this section that all vectors are of unit length. 

For those $(x,z)\in M^{int}\times \p M$ for which $d_F(\cdot,z)$ is smooth at $x$ we can use the differential  of the distance function to determine the image of the distance minimizing geodesic from $x$ to $z$. In this sense our problem is related to the Finslerian version of Hilbert's $4^{th}$ problem which is: To recover Finsler metric from the images of the geodesics. In this  setting the problem has been studied for instance in \cite{bucataru2016funk, bucataru2012projective,  bucataru2014finsler,  matveev2012projective, paiva2005symplectic}.
\medskip

The main result in this section is the following.

\begin{Proposition}
\label{Le:closure_of_Gs}
For any smooth connected and compact Finsler manifold $(M,F)$ with smooth boundary it holds that 
\begin{equation}
\overline{\widehat{G}(M,F)}=\overline{G(M,F)}.
\end{equation}
\end{Proposition}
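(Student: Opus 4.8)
The plan is to prove the two inclusions $\overline{\widehat G(M,F)} \subseteq \overline{G(M,F)}$ and $\overline{G(M,F)} \subseteq \overline{\widehat G(M,F)}$ separately. The first inclusion is the easy one: by definition $\widehat G(M,F) \setminus \partial_{out}TM \subseteq G(M,F)$, and the boundary piece $\partial_{out}TM$ consists of outward pointing vectors at boundary points, each of which is a limit of vectors $(x_j,y_j) \in G(M,F)$ with $x_j \in M^{int}$ (push slightly into the interior along the incoming geodesic); hence $\partial_{out}TM \subseteq \overline{G(M,F)}$, and taking closures gives $\overline{\widehat G(M,F)} \subseteq \overline{G(M,F)}$.

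The substance is the reverse inclusion, and since both sides are closed it suffices to show $G(M,F) \subseteq \overline{\widehat G(M,F)}$. Fix $(x,y) \in G(M,F)$ with $F(y)=1$; then $\gamma_{x,y}$ minimizes from $x$ to $z := z(x,y) \in \partial M$ and $\gamma_{x,y}(0,t(x,y)) \subset M^{int}$. The goal is to approximate $(x,y)$ by elements of $\widehat G(M,F)$, i.e.\ by pairs $(x',y')$ for which $d_F(\cdot, z(x',y'))$ is genuinely $C^\infty$ at $x'$. The natural idea is to move the base point slightly \emph{backward} along the geodesic: for small $\varepsilon>0$ consider $x_\varepsilon := \gamma_{x,y}(-\varepsilon)$ (using the extension $(N,H)$ from the previous subsection so that the geodesic exists for negative time), with the same direction $\dot\gamma_{x,y}(-\varepsilon)$. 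If $x_\varepsilon$ still lies in $M^{int}$ we are extending the minimizer, so the geodesic from $x_\varepsilon$ to $z$ is still minimizing, still stays in the interior until it hits $z$, and now $x$ is an interior point of this minimizing segment — so $x$ is strictly before the cut point of $x_\varepsilon$ along this direction, hence $d_F(\cdot,z)$ is smooth near $x$, and also near $x_\varepsilon$ provided we also stay before the first conjugate point. The catch is that $x_\varepsilon$ need not be in $M$ at all: $x$ might be arbitrarily close to $\partial M$ on the incoming side, or the backward geodesic might exit $M$. This is where I expect the main difficulty to lie.

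To get around that, rather than extending backward one should perturb the terminal point forward past the boundary cut locus from the boundary's perspective, or use the results already assembled: Lemma~\ref{Le:boundary_vs_normal_cut_dist} guarantees the boundary cut distance is strictly less than the ordinary cut distance along inward normals, and the argument in the ``interior coordinates'' discussion (Lemma~\ref{Le:interior_coordinates}) shows that if $\gamma_{x,y}$ minimizes to $z$ without conjugate points and $x$ is not a cut point of $z$, then $d_F(\cdot,z)$ is smooth near $x$; so $(x,y) \in \widehat G(M,F)$ already in that case. Thus $G(M,F) \setminus \widehat G(M,F)$ consists only of those $(x,y)$ where $x$ is exactly a cut point or focal point of $z(x,y)$ — a ``thin'' set. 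The plan is then: (1) show the bad set has empty interior in $G(M,F)$, by perturbing $y$ within $S_xM$ — for nearby directions $y'$ the terminal point $z(x,y')$ moves along $\partial M$, and by Klingenberg-type arguments (Lemma~\ref{Le:bcutlocus_func_cont}, continuity of $\tau_{\partial M}$, and Lemma~\ref{Le:prop_boundary_cut}) the base point $x$ is \emph{strictly} before the cut distance for all but a nowhere-dense set of such $y'$; (2) conclude every $(x,y)\in G(M,F)$ is a limit of points in $\widehat G(M,F)$, possibly also perturbing $x$ slightly along $\gamma_{x,y'}$ and invoking the openness of the transversality and no-conjugate-point conditions together with the smooth dependence of $\tau_{exit}$ established earlier. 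Taking closures finishes the proof.

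The main obstacle, as indicated, is handling base points $x$ that are simultaneously a cut point of every nearby terminal boundary point and cannot be pushed backward into $M$; the resolution is to exploit that $G(M,F)$ is conic and that $t(x,y) > 0$ strictly for interior $x$, so there is always room to perturb the \emph{direction} $y$ (keeping $x$ fixed) into the complement of a nowhere-dense set where smoothness of $d_F(\cdot, z(x,y'))$ at $x$ holds, rather than needing to perturb $x$ itself.
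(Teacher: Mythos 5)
Your treatment of the easy inclusion $\widehat G(M,F)\subseteq\overline{G(M,F)}$ matches the paper: interior points of $\widehat G$ are already in $G$, and outward-pointing boundary vectors are limits of $(\gamma_{x,v}(-t),\dot\gamma_{x,v}(-t))\in G(M,F)$ for small $t>0$ by transversality.

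For the substantive inclusion $G(M,F)\subseteq\overline{\widehat G(M,F)}$, however, your proposed mechanism diverges from the paper's in a way that leaves genuine gaps. The paper's key move is to push the base point \emph{forward} along $\gamma_{x,v}$ to $(x',v')=(\gamma_{x,v}(s),\dot\gamma_{x,v}(s))$ with $s\in(0,\tau_{exit}(x,v))$ --- this is Lemma~\ref{Le:transverse_exit_direction_and_smooth_dist}. Forward motion sidesteps precisely the obstacle you flagged: the forward segment automatically stays in $M^{int}$ and remains minimizing, so there is no issue of leaving $M$. You instead propose moving backward (which you correctly note can leave $M$), and then pivot to perturbing the \emph{direction} $v$ while fixing $x$. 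That pivot has two concrete problems. First, nothing in the proposal shows that nearby $(x,v')$ remain in $G(M,F)$: perturbing the direction can destroy the minimizing property, or cause the geodesic to hit $\partial M$ prematurely, so the ``nowhere dense bad set'' argument has nothing to run on. Second, the genericity lemmas you invoke, namely the continuity of $\tau_{\partial M}$ in Lemma~\ref{Le:bcutlocus_func_cont} and the singularity dichotomy in Lemma~\ref{Le:prop_boundary_cut}, are statements about the boundary normal exponential map $\exp^\perp$ and the boundary cut distance --- they govern geodesics that meet $\partial M$ \emph{orthogonally}. The geodesics $\gamma_{x,v}$ for $(x,v)\in G(M,F)$ generically exit transversally at non-right angles (or even tangentially), so those lemmas do not directly control the terminal points $z(x,v')$ as $v'$ varies.

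Two further points the sketch underestimates. Even once the forward-pushed point $(x',v')$ is before any conjugate/cut phenomena in an extended closed manifold $(N,H)$, one still must verify that the \emph{boundary-constrained} distance $d_F(\cdot,z)$ --- computed among curves in $M$ --- coincides locally with the unconstrained distance in the extension; a shorter path could in principle leave $M$ and return. Establishing that coincidence is the bulk of the paper's proof of Lemma~\ref{Le:transverse_exit_direction_and_smooth_dist} (uniqueness of the minimizer in a carefully chosen $\widetilde M$, a Myers-type limiting argument for nearby points, etc.) and does not follow from a bare ``before the cut point, hence smooth'' assertion. Finally, the tangential-exit case needs its own device: the paper perturbs the exit covector at $z$ by a small inward-normal component $\xi_h=(-\eta+h\stackrel{\leftarrow}{\nu}_{in})/\stackrel{\leftarrow}{F}(z,-\eta+h\stackrel{\leftarrow}{\nu}_{in})$ and uses Lemma~\ref{Le:final_condition} to show the resulting reversed geodesics are minimizing for small $h$, reducing to the transversal case. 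Your proposal gestures at this difficulty but does not supply the argument. So while the skeleton (two inclusions, easy/hard split, focus on smoothness of $d_F(\cdot,z(x,y))$) is right, the engine of the hard direction is missing.
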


We need a couple of auxiliary results to prove this proposition. We  state these auxiliary results below and prove them after the proof of Proposition~\ref{Le:closure_of_Gs}.

\begin{Lemma}
\label{Le:corner_in_the_curve_2}
Let $x_1,x_2\in M$ and let $c\colon [0,1]\to M$ be a rectifiable curve from $x_1$ to $x_2$. Let $t_0 \in (0,1)$ be such that $c(t_0)\in \p M$. If there exits  $\delta>0$ such that $c|_{[t_0-\delta, t_0]}$ is a geodesic and  
$\lim_{t \nearrow t_0} \dot{c}(t)$ is transversal to $\p M$ then there exists a rectifiable curve $\alpha\colon [0,1]\to M$ from $x_1$ to $x_2$ such that
\begin{equation}
\mathcal L (\alpha)<\mathcal{L}(c).
\end{equation}
\end{Lemma}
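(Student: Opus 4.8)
The plan is to build a shortcut by cutting the corner that the curve makes when it touches $\p M$ at $c(t_0)$ from the interior side and comes back. The key geometric input is that the incoming velocity $\lim_{t \nearrow t_0}\dot c(t)=:v$ is transversal to $\p M$ and points outward (since $c$ stays in $M$ and $c(t_0)\in\p M$), so the geodesic $c|_{[t_0-\delta,t_0]}$ hits the boundary at positive angle. First I would pass to boundary normal coordinates near $c(t_0)$ as provided by Corollary~\ref{Le:boundary_coordinates}: there is a neighborhood $W$ of $c(t_0)$ in $M$, an $\epsilon>0$, and a diffeomorphism $W\cap U(\p M,\epsilon)\ni x\mapsto(z(x),s(x))\in\p M\times[0,\epsilon)$ with $s(x)=d_F(x,\p M)$. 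In these coordinates, the incoming geodesic piece reaches the boundary $\{s=0\}$ with $\frac{d}{dt}s(c(t))\to df(v)\neq 0$ at $t=t_0$.

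Next I would locate a small time $\eta>0$ so that for $t\in[t_0-\eta,t_0)$ the point $c(t)$ lies in the coordinate chart with $s(c(t))>0$, and choose any point $q_-=c(t_0-\eta)\in M^{int}$. On the other side, because $c$ is merely rectifiable for $t>t_0$, I cannot assume it immediately re-enters the interior along a geodesic; instead I would choose $\eta'>0$ small and let $q_+=c(t_0+\eta')$. If $q_+\in\p M$ for all small $\eta'$, a separate (easier) argument applies, pushing that boundary arc slightly inward using the strict convexity/transversality in normal coordinates; the generic case is $q_+\in M^{int}$ for some small $\eta'$. Now replace the sub-arc $c|_{[t_0-\eta,t_0+\eta']}$ by the $d_F$-minimizing curve $\alpha_0$ from $q_-$ to $q_+$ (which exists by completeness, as used in Proposition~\ref{Pr:topology}), and keep $c$ elsewhere; call the result $\alpha$. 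Then $\mathcal L(\alpha)\le \mathcal L(c)$ trivially, with equality only if $c|_{[t_0-\eta,t_0+\eta']}$ is itself a minimizing geodesic from $q_-$ to $q_+$.

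The heart of the matter is to rule out that equality, i.e.\ to show $c|_{[t_0-\eta,t_0+\eta']}$ is \emph{not} minimizing, which forces the strict inequality $\mathcal L(\alpha)<\mathcal L(c)$. This is where the transversality hypothesis is essential: a minimizing curve in $M$ between two interior points that touches $\p M$ at an interior time must do so tangentially, because otherwise one can locally cut the corner. Concretely, in the normal coordinates $s$ is a smooth function with $s\ge 0$ on $M$, $s=0$ on $\p M$, and along $c$ near $t_0$ we have $s(c(t_0))=0$, $\frac{d}{dt}s(c(t))\to df(v)<0$ as $t\nearrow t_0$ (incoming toward the boundary, outward velocity). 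Thus $s\circ c$ has a strict local behavior incompatible with $c$ being $C^1$ through $t_0$ while staying in $\{s\ge 0\}$ unless there is a genuine corner; and a curve with a corner is never locally length-minimizing in a Finsler manifold (the first variation / triangle inequality argument: replacing a small corner by a minimizing segment strictly decreases length, exactly as in Lemma~\ref{Le:normal_geo_is_mini}-type reasoning). I expect this corner-cutting estimate — making it quantitative enough to conclude \emph{strict} decrease, and handling the low regularity of $c$ on $[t_0,1]$ together with the degenerate case where $c$ runs along $\p M$ for $t>t_0$ — to be the main obstacle; everything else is bookkeeping with the boundary normal coordinates and the existence of minimizers on the compact complete manifold $(M,F)$.
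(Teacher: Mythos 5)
Your overall strategy---cut out the piece of $c$ that makes the corner at $c(t_0)$, replace it by a $d_F$-minimizer, and argue that the original piece could not itself have been a minimizer---is the right idea and is, in spirit, what the paper does. But the way you close the argument is circular, and this circularity is precisely where the technical work lies.

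Your key step is the assertion that ``a minimizing curve in $M$ between two interior points that touches $\p M$ at an interior time must do so tangentially, because otherwise one can locally cut the corner,'' and that ``a curve with a corner is never locally length-minimizing \dots (the first variation / triangle inequality argument).'' This is essentially the statement of Lemma~\ref{Le:corner_in_the_curve_2} itself. The reason the lemma needs a proof at all is that the standard corner-cutting argument is \emph{not} automatic in a manifold with boundary: a first variation that cuts the corner must produce a competing curve that stays inside $M$, and the unique geodesic joining two nearby points on the two branches of $c$ lives a priori in some extension of $M$, not in $M$. You do observe correctly that $t\mapsto s(c(t))$ cannot be $C^1$ at $t_0$, so a corner is forced; but concluding non-minimality from the corner is exactly what is at stake. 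One could instead invoke the $C^{1,1}$-regularity of length minimizers in a manifold with boundary (Alexander--Berg--Bishop in the Riemannian case), but that is a substantial external result you neither cite nor prove, and whose Finsler analogue would itself need justification.

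The paper sidesteps this by extending $(M,F)$ to a larger Finsler manifold $(\widetilde M,\widetilde F)$ with $c(t_0)\in\widetilde M^{\mathrm{int}}$, then working inside a small geodesic ball centered at $x=c(t_1)$ where the extended exponential map is a diffeomorphism and radial geodesics are the unique minimizers. Transversality of $\dot c(t_1)$ to $\p M$ is then used twice: it makes $\tau_{\mathrm{exit}}$ smooth on a neighborhood $U\subset S_xM$ of $\dot c(t_1)$, so that $\exp_x(C)$ with $C=\{rw: w\in U,\ r\in[0,\tau_{\mathrm{exit}}(x,w)]\}$ covers a full $M$-neighborhood of $z$; and it guarantees the radial geodesic from $x$ to a point $c(t_2)\in\exp_x(C)$ stays in $M$. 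Since the extended geodesic with initial data $(x,\dot c(t_1))$ leaves $M$ at $z$, the radial geodesic to $c(t_2)$ is a different curve, so uniqueness of minimizers in the ball gives the strict inequality $\mathcal L(c|_{[t_1,t_2]})>d_F(x,c(t_2))$, with the replacement curve an honest geodesic of $(M,F)$. This also dispenses with your ``degenerate case'' ($c$ running along $\p M$ for $t>t_0$) without any separate argument: one only needs some $t_2>t_0$ with $c(t_2)\in\exp_x(C)$, and boundary points are allowed. In short, your proposal names the right shortcut but lacks the mechanism (extension plus exponential-map uniqueness) that makes the strict inequality non-circular and keeps the competitor curve inside $M$.
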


\begin{Lemma}
\label{Le:transverse_exit_direction_and_smooth_dist}
Suppose that $(x,v)\in G(M,F)$. If the exit direction is transversal to the boundary then for any $s\in (0,\tau_{exit}(x,v))$ the point $(x',v'):=(\gamma_{x,v}(s),\dot \gamma_{x,v}(s))\in \widehat{G}(M,F)$.
\end{Lemma}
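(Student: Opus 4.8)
The statement to prove is Lemma~\ref{Le:transverse_exit_direction_and_smooth_dist}: if $(x,v)\in G(M,F)$ with exit direction transversal to $\p M$, then every interior point $(x',v')=(\gamma_{x,v}(s),\dot\gamma_{x,v}(s))$ with $s\in(0,\tau_{exit}(x,v))$ lies in $\widehat{G}(M,F)$, i.e.\ the distance function $d_F(\cdot,z(x',v'))$ is $C^\infty$ near $x'$. Note first that $z(x',v')=z(x,v)=:z$, since both geodesics $\gamma_{x,v}$ and its tail $\gamma_{x',v'}$ reach $\p M$ at the same point along the same curve; also $\gamma_{x',v'}$ is still minimizing from $x'$ to $z$ (a subsegment of a minimizer is a minimizer), so $(x',v')\in G(M,F)$. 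The remaining task is the smoothness of $d_F(\cdot,z)$ at $x'$, which I would deduce exactly as in the interior-coordinate construction: it suffices to check that $x'$ is connected to $z$ by a \emph{unique} minimizing geodesic and that $x'$ and $z$ are not conjugate along it.

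\textbf{Key steps.} First I would invoke Lemma~\ref{Le:differential_of_dist_func} (referenced in the appendix): $d_F(\cdot,z)$ is smooth at a point $p$ precisely when there is a unique minimizing geodesic from $p$ to $z$ and $p$ is not an ordinary cut point of $z$ along that geodesic — equivalently $d_F(p,z)<\stackrel{\leftarrow}{\tau}(z,\stackrel{\leftarrow}{\nu})$ for the appropriate direction and no conjugate points before $p$. Second, I would use the transversality hypothesis together with Lemma~\ref{Le:boundary_vs_normal_cut_dist} and Lemma~\ref{Le:prop_boundary_cut}: since the geodesic $\stackrel{\leftarrow}{\gamma}$ from $z$ into $M$ is minimizing all the way to $x$ (because $(x,v)\in G(M,F)$) and exits transversally, the boundary cut point along this normal-type geodesic occurs at or after $x$; hence for \emph{every} interior point $x'$ on the open segment $\gamma_{x,v}(0,\tau_{exit})$ we are strictly before the first cut point, so $x'$ is not conjugate to $z$ and the minimizer is locally unique. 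Third, to promote local uniqueness to genuine uniqueness of the \emph{global} minimizer I would argue by contradiction: if a second minimizing curve $c$ from $x'$ to $z$ existed, concatenating the segment $\gamma_{x,v}|_{[0,s]}$ with $c$ would give a minimizing curve from $x$ to $z$ distinct from $\gamma_{x,v}$, contradicting that $\gamma_{x,v}$ is the unique minimizer implied by $(x,v)\in G(M,F)$ lying strictly before its cut point; more precisely, any minimizer from $x$ to $z$ through $x'$ must coincide with $\gamma_{x,v}$ on $[0,s]$ and therefore have the same velocity $v'$ at $x'$, forcing $c=\gamma_{x',v'}$. Here is where Lemma~\ref{Le:corner_in_the_curve_2} may be needed: to rule out a competing minimizer from $x'$ that returns to $\p M$ before reaching $z$, or to handle the possibility that the concatenation develops a corner, I would note such a corner at a boundary point with transversal incoming direction could be shortened, contradicting minimality.

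\textbf{Main obstacle.} The delicate point is the interface between ``minimizing \emph{to} the terminal boundary point'' and ``the curve stays in $M^{int}$ until it hits $\p M$''. One must ensure that a hypothetical alternative minimizer from $x'$ to $z$ does not sneak along or touch the boundary prematurely; this is precisely what Lemma~\ref{Le:corner_in_the_curve_2} is designed to preclude, via the transversality of the exit direction. So I expect the proof to reduce, after the easy identifications $z(x',v')=z(x,v)$ and $(x',v')\in G$, to: (a) apply Lemma~\ref{Le:boundary_vs_normal_cut_dist} to place $x'$ strictly before the cut point of $z$ along the reversed normal geodesic, giving no conjugate points and local uniqueness; (b) use the fact that $\gamma_{x,v}$ is the unique global minimizer from $x$ to $z$ (a consequence of being strictly inside the cut locus) to propagate global uniqueness down to $x'$, invoking Lemma~\ref{Le:corner_in_the_curve_2} to exclude boundary-touching competitors; (c) conclude via Lemma~\ref{Le:differential_of_dist_func} that $d_F(\cdot,z)$ is $C^\infty$ at $x'$, hence $(x',v')\in\widehat{G}(M,F)$.
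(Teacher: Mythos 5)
Your plan correctly identifies several of the right ingredients (transversality, non-conjugacy of $x'$ and $z$ because the segment $\gamma_{x,v}|_{[s,\tau_{exit}]}$ is a proper subsegment of a minimizer, the role of Lemma~\ref{Le:corner_in_the_curve_2} to kill boundary-touching competitors), but it misses the central technical device of the paper's proof and mischaracterizes two of the lemmas it leans on.

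The missing idea is the extension of the manifold. Since $z\in\p M$, the distance function $d_F(\cdot,z)$ is not the restriction of a distance function on a closed manifold, and the argument ``before the cut point $\Rightarrow$ smooth'' that you attribute to Lemma~\ref{Le:differential_of_dist_func} is not available: that lemma only gives the \emph{formula} for $d(d_F(\cdot,z))$ assuming smoothness, and the smoothness statement (Lemma~\ref{Le:smoothness_of_the_distance_function}) is proved for the closed extension $N$, not for $M$. The paper therefore constructs a carefully chosen intermediate manifold $\widetilde M$ with $M\subset\widetilde M\subset N$, $\widetilde F=H|_{\widetilde M}$ and $z\in\widetilde M^{int}$, shows that $\gamma_{x,v}$ is the \emph{unique} $\widetilde F$-minimizer from $x'$ to $z$ (this is where the $\Gamma$/$\epsilon_0$/$W_0$/$W_1$ bookkeeping and Lemma~\ref{Le:corner_in_the_curve_2} enter), verifies that the local pullback-exponential distance $\rho(\cdot,z)$ equals $d_{\widetilde F}(\cdot,z)$ near $x'$ via a limit argument through~\cite{myers1945arcs}, and finally shows $d_F(\cdot,z)=d_{\widetilde F}(\cdot,z)$ near $x'$. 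None of this can be bypassed by a direct ``strictly inside the cut locus of $M$'' argument because the relevant cut locus is that of the auxiliary extension, not of $M$.

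Two further gaps. First, $(x,v)\in G(M,F)$ only asserts that $\gamma_{x,v}$ is \emph{a} minimizer from $x$ to $z$, not the unique one, so your contradiction ``concatenating $\gamma_{x,v}|_{[0,s]}$ with a competing $c$ would give a second minimizer from $x$ to $z$, contradicting uniqueness'' has no uniqueness to contradict; uniqueness at $x'$ has to be \emph{established} (this is exactly what the $\widetilde M$-construction does), and a branching at $x'$ is excluded by the corner/non-$C^1$ argument, not by a preassumed global uniqueness at $x$. Second, Lemma~\ref{Le:boundary_vs_normal_cut_dist} concerns geodesics that are \emph{normal} to $\p M$; here $\gamma_{x,v}$ is only transversal, so that lemma does not apply to place $x'$ before a boundary cut point along $\gamma_{x,v}$.
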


\begin{Lemma}
\label{Le:final_condition}
Suppose that $(x,v)\in G(M,F)$, and the exit direction $\eta$ is tangential to the boundary at $z$. Assume that there exists $h>0$ such that for any $h'\in (0,h)$ the geodesic $\stackrel{\leftarrow} {\gamma}_{z,\xi_{h'}}\colon [0,\tau_{exit}(x,v)]\to M$ is well defined, where  
\begin{equation}
\xi_{h'}:=\frac{-\eta+h'\stackrel{\leftarrow} {\nu}_{in}}{\stackrel{\leftarrow} {F}(z,-\eta+h'\stackrel{\leftarrow} {\nu}_{in})}\in T_zM.
\end{equation} 

Then there exist sequences $(h_j)_{j=1}^\infty, \: (\epsilon_j)_{j=1}^\infty\subset \R$ such that $h_j, \epsilon_j>0$, $h_j,\epsilon_j\to 0$ as $j\to \infty$ and moreover the geodesic $\stackrel{\leftarrow} {\gamma}_{z,\xi_{h_j}}$ is a distance minimizing curve of $(M,\stackrel{\leftarrow} {F})$ from $z$ to $\stackrel{\leftarrow} {\gamma}_{z,\xi_{h_j}}(\tau_{exit}(x,v)-\epsilon_j)$ for any $j\in \N$ that is large enough.

\end{Lemma}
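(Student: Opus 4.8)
\emph{Proof plan.} Write $T:=\tau_{exit}(x,v)$ and $\stackrel{\leftarrow}{\gamma}:=\stackrel{\leftarrow}{\gamma}_{z,-\eta}$. First I would record the basic facts about the unperturbed curve: since $(x,v)\in G(M,F)$ and $F(v)=1$, the reversal $\stackrel{\leftarrow}{\gamma}(t)=\gamma_{x,v}(T-t)$ is a unit speed $\stackrel{\leftarrow}{F}$-geodesic from $z$ to $x$, distance minimizing in $(M,\stackrel{\leftarrow}{F})$ (because $d_{\stackrel{\leftarrow}{F}}(z,x)=d_F(x,z)=T$), with $\stackrel{\leftarrow}{\gamma}((0,T])\subset M^{int}$; moreover $\stackrel{\leftarrow}{F}(z,-\eta+h'\stackrel{\leftarrow}{\nu}_{in})\to\stackrel{\leftarrow}{F}(z,-\eta)=F(z,\eta)=1$ as $h'\to 0^+$, so $\xi_{h'}\to-\eta$ and, by continuous dependence of geodesics on their initial data, $\stackrel{\leftarrow}{\gamma}_{z,\xi_{h'}}\to\stackrel{\leftarrow}{\gamma}$ in $C^1([0,T])$. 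For $h'\in(0,h)$ set $\beta(h'):=\sup\{t\in[0,T]:\ \stackrel{\leftarrow}{\gamma}_{z,\xi_{h'}}|_{[0,t]}\ \text{is a distance minimizer of }(M,\stackrel{\leftarrow}{F})\}$; the set of such $t$ is a closed subinterval $[0,\beta(h')]$ of $[0,T]$. The lemma then reduces to proving $\beta(h')\to T$ as $h'\to 0^+$: granting this, choose any $h_j\to 0^+$ in $(0,h)$ and put $\epsilon_j:=\max\{T-\beta(h_j),\,1/j\}$, so that $\epsilon_j>0$, $\epsilon_j\to 0$ and $T-\epsilon_j\le\beta(h_j)$; hence for all large $j$ the geodesic $\stackrel{\leftarrow}{\gamma}_{z,\xi_{h_j}}$ is a distance minimizer of $(M,\stackrel{\leftarrow}{F})$ from $z$ to $\stackrel{\leftarrow}{\gamma}_{z,\xi_{h_j}}(T-\epsilon_j)=\stackrel{\leftarrow}{\gamma}_{z,\xi_{h_j}}(\tau_{exit}(x,v)-\epsilon_j)$, which is the claim.

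To prove $\beta(h')\to T$ I would fix $\alpha\in(0,T)$ and show $\beta(h')\ge\alpha$ for all sufficiently small $h'$; letting $\alpha\uparrow T$ then finishes. Since $\stackrel{\leftarrow}{\gamma}|_{[0,T]}$ is minimizing and $\alpha<T$, the point $p:=\stackrel{\leftarrow}{\gamma}(\alpha)\in M^{int}$ lies strictly before the cut point along $\stackrel{\leftarrow}{\gamma}$. I would use two consequences of this, both issuing from the index form / Jacobi field theory of Section~\ref{Se:Appendix2} (cf.\ \cite[Section III.6]{Chavel}): (a) $\stackrel{\leftarrow}{\gamma}|_{[0,\alpha]}$ is the \emph{unique} distance minimizer of $(M,\stackrel{\leftarrow}{F})$ from $z$ to $p$ — for if $\sigma$ were another one, then concatenating $\sigma$ with $\stackrel{\leftarrow}{\gamma}|_{[\alpha,T]}$ would produce a minimizer from $z$ to $x$ with a genuine corner at the interior point $p$, and this could be shortened while staying inside $M$, a contradiction; and (b) $p$ is not conjugate to $z$ along $\stackrel{\leftarrow}{\gamma}$, since minimizing geodesics carry no interior conjugate points (any such conjugate point would again give an $M$-interior shortcut). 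To bring the standard cut-locus/exponential-map apparatus to bear at the boundary point $z$ with the tangential direction $-\eta$, I would pass to the closed extension $(N,H)$ of $(M,F)$: because $\stackrel{\leftarrow}{F}$ is the restriction of $\stackrel{\leftarrow}{H}$ and $\stackrel{\leftarrow}{\gamma}$ lies in $M^{int}$ on $(0,T]$, the curve $\stackrel{\leftarrow}{\gamma}$ is itself an $\stackrel{\leftarrow}{H}$-geodesic, so by uniqueness for the geodesic ODE it equals the $\stackrel{\leftarrow}{H}$-geodesic issuing from $z$ in direction $-\eta$ on $[0,T]$; likewise $\stackrel{\leftarrow}{\gamma}_{z,\xi_{h'}}$, being by hypothesis a curve in $M$, coincides on $[0,T]$ with the $\stackrel{\leftarrow}{H}$-geodesic in direction $\xi_{h'}$. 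Since $p$ is not conjugate to $z$, the reversed exponential map $\stackrel{\leftarrow}{\exp}_z$ of $\stackrel{\leftarrow}{H}$ restricts to a diffeomorphism from an open neighborhood $\mathcal O\ni\alpha(-\eta)$ in $T_zN$ onto a neighborhood of $p$.

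Now suppose, for contradiction, that $\beta(h'_k)<\alpha$ along a sequence $h'_k\to 0^+$ in $(0,h)$. Then $\stackrel{\leftarrow}{\gamma}_{z,\xi_{h'_k}}|_{[0,\alpha]}$ is not minimizing, so $L_k:=d_{\stackrel{\leftarrow}{F}}(z,q_k)<\alpha$ where $q_k:=\stackrel{\leftarrow}{\gamma}_{z,\xi_{h'_k}}(\alpha)\in M$. Pick unit speed $\stackrel{\leftarrow}{F}$-minimizing geodesics $\sigma_k\colon[0,L_k]\to M$ from $z$ to $q_k$; these exist because $(M,d_{\stackrel{\leftarrow}{F}})$ is a compact complete path metric space \cite[Theorem 2.5.23]{burago2001course}. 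Since $q_k\to p$ and $d_{\stackrel{\leftarrow}{F}}(z,\cdot)$ is continuous with $d_{\stackrel{\leftarrow}{F}}(z,p)=\alpha$, we get $L_k\to\alpha$; by the Arzel\`a--Ascoli theorem a subsequence of the $\sigma_k$ converges uniformly to a distance minimizer from $z$ to $p$, which by (a) must be $\stackrel{\leftarrow}{\gamma}|_{[0,\alpha]}$, and then continuous dependence on initial data yields $\dot\sigma_k(0)\to-\eta$ along a further subsequence. For $k$ large, both $L_k\dot\sigma_k(0)$ and $\alpha\xi_{h'_k}$ lie in $\mathcal O$ (they converge to $\alpha(-\eta)$), and $\stackrel{\leftarrow}{\exp}_z$ sends both of them to $q_k$ (using that $\sigma_k$ and $\stackrel{\leftarrow}{\gamma}_{z,\xi_{h'_k}}$ agree with the corresponding $\stackrel{\leftarrow}{H}$-geodesics as long as they stay in $M$); injectivity of $\stackrel{\leftarrow}{\exp}_z$ on $\mathcal O$ then forces $L_k\dot\sigma_k(0)=\alpha\xi_{h'_k}$, and applying $\stackrel{\leftarrow}{F}(z,\cdot)$, homogeneous of degree one, gives $L_k=\alpha$, contradicting $L_k<\alpha$. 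Hence $\beta(h')\ge\alpha$ for all small $h'$.

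The step I expect to be the main obstacle is reconciling the classical cut-locus and exponential-map tools — normally formulated for interior base points — with the present situation in which $z\in\partial M$ and $-\eta$ is tangential to $\partial M$: one must justify carefully, through the closed extension $(N,H)$ and the fact that the relevant geodesics immediately enter $M^{int}$, that those tools still apply, and one must verify that every competitor curve and every shortcut used for the uniqueness of the minimizer and the absence of conjugate points can be chosen inside $M$. The standing hypothesis that $\stackrel{\leftarrow}{\gamma}_{z,\xi_{h'}}$ is defined on all of $[0,\tau_{exit}(x,v)]$ with values in $M$ is exactly what makes the perturbative part of the argument survive the tangency of $-\eta$.
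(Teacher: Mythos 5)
Your overall strategy coincides with the paper's: fix an intermediate time $\alpha<T$, use that $\stackrel{\leftarrow}{\gamma}|_{[0,\alpha]}$ is the unique minimizer of $(M,\stackrel{\leftarrow}{F})$ from $z$ to $p=\stackrel{\leftarrow}{\gamma}(\alpha)$ and that $p$ is not conjugate to $z$, show by a compactness argument (Myers / Arzel\`a--Ascoli) that minimizers to nearby points converge to $\stackrel{\leftarrow}{\gamma}|_{[0,\alpha]}$, identify them with the perturbed geodesics via non-degeneracy of $\stackrel{\leftarrow}{\exp}_z$, and finally let the intermediate time slide to $T$ (your statement $\beta(h')\to T$ plays the role of the paper's diagonal argument over $s_j\to 0$). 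The packaging via $\beta(h')$ and the choice $\epsilon_j=\max\{T-\beta(h_j),1/j\}$ is a harmless variant.

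There is, however, a genuine gap in your identification step. You need both the identity $\stackrel{\leftarrow}{\exp}_z(L_k\dot\sigma_k(0))=q_k$ and the convergence $\dot\sigma_k(0)\to-\eta$. The curves $\sigma_k$ are distance minimizers of $(M,\stackrel{\leftarrow}{F})$ issuing from the \emph{boundary} point $z$, and the limiting direction $-\eta$ is \emph{tangential} to $\p M$; nothing rules out that $\sigma_k$ runs along, or repeatedly touches, $\p M$ on a short initial interval, where it need not be a geodesic of the extension $(N,H)$. In that case $\stackrel{\leftarrow}{\exp}_z(L_k\dot\sigma_k(0))\neq q_k$, and uniform convergence $\sigma_k\to\stackrel{\leftarrow}{\gamma}|_{[0,\alpha]}$ does not by itself yield $\dot\sigma_k(0)\to-\eta$ (that would require the $\sigma_k$ to be unbroken $N$-geodesics on a fixed initial interval). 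The paper sidesteps exactly this by phrasing non-conjugacy as the existence of neighborhoods $V\ni -\alpha\eta$ in $T_zN$ and $U\supset\stackrel{\leftarrow}{\gamma}([0,\alpha])$ in the extension such that, for $y\in V$, the radial geodesic $t\mapsto\stackrel{\leftarrow}{\exp}_z(ty)$ is the unique shortest curve \emph{among all curves contained in $U$} from $z$ to $\stackrel{\leftarrow}{\exp}_z(y)$. Then the $C^0$ convergence you already have suffices: for large $k$ the image of $\sigma_k$ lies in $U$ and $\alpha\xi_{h'_k}\in V$, so $\sigma_k$ must be the radial geodesic to $q_k$, i.e.\ $\stackrel{\leftarrow}{\gamma}_{z,\xi_{h'_k}}|_{[0,\alpha]}$ after reparametrization, and $L_k=\alpha$ follows without ever discussing $\dot\sigma_k(0)$. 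Replacing your pointwise injectivity argument by this ``unique shortest curve in a tube'' statement closes the gap. A smaller caveat, shared with the paper and which you correctly flag at the end, is that the shortcut constructions behind uniqueness of the minimizer and absence of conjugate points must be checked to produce competitor curves lying inside $M$ near the tangential contact point $z$.
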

\begin{proof}[Proof of Proposition~\ref{Le:closure_of_Gs}] 
Since the sets $\widehat{G}(M,F)$ and $G(M,F)$ are conical it suffices to prove that 
\begin{equation}
\overline{\widehat{G}(M,F)} \cap SM =\overline{G(M,F)} \cap SM.
\end{equation}

\medskip
We first prove $\widehat{G}(M,F) \subset \overline{G(M,F)}$, which implies $\overline{\widehat{G}(M,F)} \subset \overline{G(M,F)}$. We let $(x,v) \in \widehat{G}(M,F)$. If $x \in M^{int}$, then clearly $(x,v)\in \overline{{G}(M,F)}$.  If $(x,v) \in \p_{out}TM$ then due to transversality of $v$ and $T_x\p M$ there exists $\epsilon>0$ such that for every  $t\in (0,\epsilon)$ we have
\begin{equation}
(\gamma_{x,v}(-t),\dot \gamma_{x,v}(-t)) \in G(M,F).
\end{equation}
Thus $(x,v) \in \overline{G(M,F)}$.

\medskip
Next we show that $G(M,F) \subset \overline{\widehat{G}(M,F)}$. We let $(x,v) \in G(M,F)$. 
Lemma~\ref{Le:transverse_exit_direction_and_smooth_dist}  implies that for any $s \in (0,\tau_{exit}(x,v))$ we have $(\gamma_{x,v}(s),\dot \gamma_{x,v}(s))\in \widehat{G}(M,F)$ if $\dot \gamma_{x,v}(\tau_{exit}(x,v))$ is transversal to $\p M$. This implies, $(x,v)\in \overline{\widehat{G}(M,F)}.$

Therefore, we assume that $( \gamma_{x,v}(\tau_{exit}(x,v)),\dot \gamma_{x,v}(\tau_{exit}(x,v))):=(z,\eta)$ is tangential to $\p M$. 
We let $(N,H)$ be a smooth complete Finsler manifold without boundary that extends $(M,\stackrel{\leftarrow} {F})$ and $\Pi\subset T_zN$ be the two dimensional vector subspace spanned by $\{\eta,\stackrel{\leftarrow} {\nu_{in}}\}$. If $a\in (0,\tau_{exit}(x,v))$ is small enough, then 
\begin{equation}
S(a):=\{\stackrel{\longleftarrow} {\exp_z}(w)\in N: w\in \Pi,  \:\stackrel{\leftarrow} {F}(z,w)<a \}
\end{equation}
is a $C^1$-smooth hyper surface of $N$ with a coordinate system given by $\eta$ and $\stackrel{\leftarrow} {\nu_{in}}$. 

We note that possible after choosing smaller $a$ the set $S(a)\cap \p M$ is given by a $C^1$-smooth graph $(s,c(s))\in S(a)$ such that for $s<0$ we have $c(s)<0$. This follows since $\p M$ is a smooth co-dimension 1 manifold and with respect to the coordinates $(\eta,\stackrel{\leftarrow} {\nu_{in}})$ of $S(a)$ we have $(-t,0)=\gamma_{x,v}(\tau_{exit}(x,v)-t)$ and $\gamma_{x,v}(\tau_{exit}(x,v)-t)$ does not hit $\p M$, if $t\in (0,a)$. Thus for any $h>0$ and $ t\in (0,a)$ the geodesic $\stackrel{\leftarrow} {\gamma}_{z,\xi_h}$ with
\begin{equation}
\xi_h:=\frac{-\eta+h\stackrel{\leftarrow} {\nu_{in}}}{\stackrel{\leftarrow} {F}(z,-\eta+h\stackrel{\leftarrow} {\nu_{in}})}
\end{equation}
of the Finsler function $H$ satisfies $\stackrel{\leftarrow} {\gamma}_{z,\xi_h}(t)\in M^{int}$.

The interval $[0,\tau_{exit}(x,v)-\frac{a}{2}]$ is compact and $\gamma_{x,v}([0,\tau_{exit}(x,v)-\frac{a}{2}])\subset M^{int}$.
Thus there is $r>0$ such that for all $t\in [0,\tau_{exit}(x,v)-\frac{a}{2}]$ we have $d_F(\gamma_{x,v}(t),\p M)< r$. 
Therefore the continuity of the exponential map implies that for any $h>0$ small enough and $t\in (0,\tau_{exit}(x,v)]$ we have $\stackrel{\leftarrow} {\gamma}_{z,\xi_h}(t)\in M^{int}$.

We note that Lemma~\ref{Le:final_condition} implies that for any $h,\epsilon>0$ that are small enough we have 
\begin{equation}
(x',v'):=\bigg(\stackrel{\leftarrow} {\gamma}_{z,\xi_h}(\tau(x,v)-\epsilon),-\dot{\stackrel{\leftarrow} {\gamma}}_{z,\xi_h}(\tau(x,v)-\epsilon)\bigg)\in G(M,F).
\end{equation}
Then Lemma~\ref{Le:transverse_exit_direction_and_smooth_dist} implies $(x',v')\in \overline{ \widehat G(M,F)}$.  Taking $h$ and $\epsilon$ to zero we finally obtain  $(x,v)\in \overline{ \widehat G(M,F)}$.
\end{proof}

\begin{proof}[Proof of Lemma~\ref{Le:corner_in_the_curve_2}]
Since $\lim_{t \nearrow t_0} \dot{c}(t)$ is transversal to $\p M$, there exists $\epsilon\in (0,\delta)$ such that $c|_{(t_0-\epsilon,t_0)}$ is a geodesic in $M^{int}$. We let $(\widetilde M, \widetilde F)$ be any compact Finsler manifold that extends $(M,F)$ and for which $c(t_0)$ is an interior point. Since $c_{|_{t_0-\epsilon,t_0}}$ is also  a geodesic of the extended manifold $(\widetilde M, \widetilde F)$, it follows from \cite[Proposition 11.3.1]{shen2001lectures} that there  exists $t_1\in (t_0-\epsilon,t_0)$ such that for $x:=c(t_1), \:z:=(c(t_0))$ we have 
\begin{equation}
(t_0-t_1)=d_{\widetilde F}(x,z)=d_{F}(x,z)=\tau_{exit}(x,\dot{c}(t_1)),
\end{equation} 
and the exponential map of $(\widetilde M, \widetilde F)$ is a $C^1$-diffeomorphism from 
\begin{equation}
\{y\in T_xM:  F(y)<2(t_0-t_1)\}
\end{equation}
onto a metric ball $B_{\widetilde F}(x,2(t_0-t_1))$ of $(\widetilde M, \widetilde F)$. 

Since $\lim_{t \nearrow t_0} \dot{c}(t)$ is transversal to $\p M$ it follows from the  Implicit Function Theorem that there exists a neighborhood $U\subset S_xM$ of $v:=\dot c(t_1)$ where the function $\tau_{exit}
$ is smooth and $\tau_{exit}(x,w)<2(t_0-t_1),$ whenever $w \in U$. We let 
\begin{equation}
C:=\{rw\in T_xM: w \in U,\: r\in [0,\tau_{exit}(x,w)]\}. 
\end{equation}
Then $\exp_x(C)$ contains an open neighborhood of $z$ in $M$. Since path $c$ is continuous there exists $t_2>t_0$ such that $\exp_x^{-1}(c(t_2))\in C$, and moreover
\begin{equation}
F(x,\exp_x^{-1}(c(t_2)))=d_{\widetilde F}(x,c(t_2))=d_{ F}(x,c(t_2)),
\end{equation}
since for any $w \in U$ the radial geodesic $\gamma_{x,w}$ of $(\widetilde M, \widetilde F)$ has the minimal length among all curves connecting $x$ to $\gamma_{x,w}(t), t\in (0,2(t_0-t_1))$.

If we denote by $\widetilde c$ the geodesic of $(\widetilde M, \widetilde F)$ that satisfies the initial condition $(\widetilde c(0),\dot{\widetilde c}(0))=(x,\dot{c}(t_1))$, then it leaves $M$ at $z$. Therefore, there exits a geodesic $\gamma$ of $(M,F)$ connecting $x=c(t_1)$ to $c(t_2)$ which satisfies
\begin{equation}
\mathcal{L}(c|_{[t_1,t_2]})>\mathcal{L}(\gamma).
\end{equation}
This implies the claim.
\end{proof}

\begin{proof}[Proof of Lemma~\ref{Le:transverse_exit_direction_and_smooth_dist}]
We note first that it follows from the  Implicit Function Theorem that there exists a neighborhood $U\subset SM$ of $(x,v)$ such that function $\tau_{exit}$ is smooth in $U$. Therefore, the mapping
\begin{equation}
U \ni (\widetilde x,w)\mapsto (z(\widetilde x,w),\eta(\widetilde x,w)):=(\gamma_{\widetilde x,w}(\tau_{exit}(\widetilde x,w)),\dot \gamma_{\widetilde x,w}(\tau_{exit}(\widetilde x,w)))
\end{equation}
is smooth and without loss of generality we may assume $\eta(\widetilde x,w)$ is transverse to~$\p M$ for any $(\widetilde x,w)\in U$. 

We let $(N, H)$ be any compact Finsler manifold without boundary extending $(M,F)$. We set $(x',v'):=(\gamma_{x,v}(s),\dot \gamma_{x,v}(s))$ and it follows that the points $x'$ and $z:=z(x,v)$ are not conjugate along $\gamma_{x,v}$. Therefore, the exponential map $\exp_{x'}$, of Finsler function $H$ is a diffeomorphism in a neighborhood $V \subset  T_{x'} N$ of $hv',$ for $h:=(\tau_{exit}(x,v)-s)$ onto some neighborhood of $z$ in $N$. Moreover 
\begin{equation}
h=d_F(x',z)\geq d_{H}(x',z).
\end{equation} 

To finish the proof, we show that we can find a smooth Finsler manifold $(\widetilde M, \widetilde F)$ so that 
$
M \subset \widetilde M \subset N, \: \widetilde F=H|_{\widetilde M}, \: z \in \widetilde M^{int}
$
and there exists a neighborhood $A \subset M^{int}$ of $x'$ so that 
\begin{equation}
d_F(\widetilde x,z)=\stackrel{\leftarrow}{\widetilde F}(z, \exp_z^{-1}(\widetilde x)), \quad \widetilde x \in A.
\end{equation}
Above the exponential map is given with respect to $\stackrel{\leftarrow}{\widetilde F}$. This implies $(x',v')\in \widehat{G}(M,F)$ and since $s\in (0,\tau_{exit}(x,v))$ was arbitrary we have $(x,v)\in \overline{\widehat{G}(M,F)}$.

We let $W_0$ be the image of $V$ under the orthogonal projection $y\mapsto \frac{y}{F(x',y)}$  on $S_{x'}N$. We let $r_0\in (0, d_F(x',\p M))$ be so small that for any $w \in S_{x'}M$ geodesic $\gamma_{x',w}|_{[0,r_0]}$ is a distance minimizer of $H$ and contained in $M^{int}$. In addition we define 
\begin{equation}
\Gamma:=\{\exp_{x'}(r_0 w)\in M^{int}: w\in\overline{(S_{x'}N) \setminus W_0}\}.
\end{equation}
Since this set is compact it follows from the triangle inequality that there exists $\epsilon_0>0$ which satisfies
\begin{equation}
\label{eq:contradic_dist}
r_0+d_F(\Gamma,z)\geq d_F(x',z)+\epsilon_0,
\end{equation}
as otherwise there would exist a $\stackrel{\leftarrow}{F}$-distance minimizing curve from $z$ to $x$ which is not $C^1$ at $x'$.

%
For $p\in N$ and $r>0$ we define
\begin{equation}
\stackrel{\leftarrow}{B}_{H}(p,r):=\{q\in N: d_H(q,p)< r\}.
\end{equation}
Since the points $x'$ and $z$ are not conjugate along $\gamma_{x,v}$ we can choose a neighborhood set  $W_1\subset W_0$ of $v'$,  and $2\epsilon_1< \epsilon_0$, $\delta >0$ such that 
\begin{equation}
\stackrel{\leftarrow}{B}_{H}(z,2\epsilon_1) \subset (\exp_{x'}((0, h+\delta) \times W_1)) 
\cap \stackrel{\leftarrow}{B}_{F}(z,\epsilon_0))
\end{equation}
and the geodesic $\gamma_{x,v}$ is the shortest curve from $x'$ to $z$ contained in $\exp_{x'}([0, h+\delta) \times W_1))$. 

We write $M_k:=M \cup \overline{ \stackrel{\leftarrow}{B}_{H}(z,k\epsilon_1)}$, for $k\in \{1,2\}$. Finally, we let $(\widetilde M, \widetilde F)$ be any smooth compact Finsler manifold with boundary such that
\begin{equation}
M_1 \subset \widetilde M \subset M_2 \quad \hbox{and} \quad \widetilde F=H|_{\widetilde M}.
\end{equation}

If $\beta$ is a distance minimizing curve of $(\widetilde M, \widetilde F)$ from $x'$ to $z$ it is a geodesic of $(M,F)$ for  $t <r_0$. Therefore, we have that $\beta=\gamma_{x,v}$ if $\dot{\beta}(0)\in W_1$. If $\dot \beta(0)\in (W_0\setminus W_1)$, then $\beta$ hits $\p \widetilde M$ transversally outside $\stackrel{\leftarrow}{B}_{H}(z,2\epsilon_1)$, which cannot happen due to Lemma~\ref{Le:corner_in_the_curve_2}.  If $\dot{\beta}(0) \in \overline{(S_{x'}N) \setminus W_0}$ then by~\eqref{eq:contradic_dist} we have
\begin{equation}
r_0+d_F(\Gamma,z)-\epsilon_0\geq d_F(x',z)\geq d_{\widetilde F}(x',z)\geq r_0+s_1+2\epsilon_1,
\end{equation}
where $s_1\geq 0$ is the time it takes to travel from~$\Gamma$ to $(\stackrel{\leftarrow}{B}_{H}(z,2\epsilon_1)\cap \widetilde M)$ along the curve~$\beta$. We note that $\beta(r_0+s_1)$ is contained in~$M$ which implies 
\begin{equation}
s_1+\epsilon_0 \geq d_F(\Gamma,z).
\end{equation} 
Thus we arrive at a contradiction $0 \geq 2\epsilon_1$, and we have proven that $\gamma_{x,v}$ is the unique distance minimizing curve of $(\widetilde M, \widetilde F)$ connecting $x'$ to~$z$.

\medskip
Since $x'$ and $z$ are not conjugate points along $\gamma_{x,v}$, the exponential map of the reversed Finsler function $\stackrel{\leftarrow}{\widetilde F}$ is a diffeomorphism from a neighborhood of $-h\eta \in T_{z}\widetilde M, \: \eta:=\eta(x,v)$ to a neighborhood of~$x'$. 
Thus  the local distance function 
\begin{equation}
q \mapsto \rho(q,z):=\stackrel{\leftarrow}{\widetilde F}(z,\bigg(\!\stackrel{\leftarrow}{\widetilde \exp}_{z}\!\bigg)^{-1}(q))
\end{equation}
is smooth near $x'$ and due to earlier part of this proof it coincides with $d_{\widetilde F}(\cdot,z)$ at~$x'$. 

We suppose that there exists a sequence $(x_j)_{j=1}^\infty\subset M$  that converges to $x'$ and for which it holds that
\begin{equation}
\label{eq:local_dist_func}
d_{\widetilde F}(x_j,z) < \rho(x_j,z).
\end{equation}
We let $\beta_j$ be a distance minimizing curve of $\stackrel{\leftarrow}{\widetilde F}$ from $z$ to $x_j$. Since $(\widetilde M, \widetilde F)$ is a compact (non-symmetric) metric space it follows form~\cite{myers1945arcs} there exists a rectifiable curve $\beta_{\infty}$ connecting $z$ to $x'$, that is a uniform limit of $\beta_{j}$ and whose length is not greater than $d_{\widetilde F}(x',z)$.  This implies $\beta_{\infty}(t)=\gamma_{x,v}(h-t),$ since $\gamma_{x,v}$ is the unique ${\widetilde F}$ distance minimizer from $x'$ to~$z$. 

Since $z\in \widetilde M^{int}$, there exists $R>0$ such that for every $j_k\in \N$ the curve $\beta_{j}(t)$ is a geodesic of $(\widetilde M, \stackrel{\leftarrow}{\widetilde F})$ if $t\in [0,R]$. Therefore, 
\begin{equation}
\label{eq:converg_of_initial_direct}
\dot{\beta}_j(0)\to -\eta \in S_{z}\widetilde M
\end{equation}
and the continuity of the exit time function $\tau_{exit}$ implies that there exists $J\in \N$ such that for every $j>J$ the curve $\beta_{j}$ is a geodesic of $(M, \stackrel{\leftarrow}{ F})$. Thus~\eqref{eq:local_dist_func} and~\eqref{eq:converg_of_initial_direct} contradict with the assumption that $\stackrel{\leftarrow}{\widetilde \exp}_{z}$ is a diffeomorphism near $-h\eta$. Therefore,~\eqref{eq:local_dist_func} cannot hold and we have that $d_{\widetilde F}(\cdot, z)$ and the local distance function $ \rho(\cdot,z)$ coincide near $x'$. 
Hence there exists a neighborhood $A\subset M^{int}$ of $x'$ in which we have
\begin{equation}
d_{F}(\cdot, z)=d_{\widetilde F}(\cdot, z)=\rho(\cdot,z),
\end{equation}
due to continuity of the exit time function.
\end{proof}

\begin{proof}[Proof of Lemma~\ref{Le:final_condition}]
We let $(z,\eta)$ be the exit point and direction of $\gamma_{x,v}$. We let $(\widetilde M, \widetilde F)$ be any compact Finsler manifold for which $z\in M^{int}$, and choose $s\in (0,\tau_{exit}(x,v))$ and denote $\ell=\ell(s):=\tau_{exit}(x,v)-s$. Then for $x':=(\gamma_{x,v}(s))$ the point $z$ is not a conjugate point along $\gamma_{x,v}$.  Since the conjugate distance function is lower continuous \cite[Section 12.1]{shen2001lectures} there exist neighborhoods $V\subset T_zM$ of $-\ell\eta$ and  $U\subset \widetilde M$ of $\gamma_{x,v}([s,\tau_{exit}(x,v)])$ such that for any $y\in V$ the shortest curve that is contained in $U$ and connects $z$ to the point $x(y):=\stackrel{\leftarrow}{\exp}_z(y)\in U$, is the geodesic $t\mapsto \;\stackrel{\leftarrow}{\exp}_z(ty), \: t\in [0,1]$.

We let $(h_j)_{j=1}^\infty, \: (\epsilon_j)_{j=1}^\infty\subset \R$ be such that $h_j, \epsilon_j>0$, $h_j,\epsilon_j\to 0$ as $j\to \infty$.  Denote $x_j:=\stackrel{\leftarrow} {\gamma}_{z,\xi_{h_j}}(\ell-\epsilon_j)$ , where $\xi_{h_j}:=\frac{-\eta+h_j\stackrel{\leftarrow} {\nu}_{in}}{\stackrel{\leftarrow} {F}(z,-\eta+h_j\stackrel{\leftarrow} {\nu}_{in})}$. Then $x_j\to x'$ as $j\to \infty$. We let $c_j:[0,\ell]\to M$ be a distance minimizing curve of $(M,\stackrel{\leftarrow} {F})$ from $z$ to $x_j$. Then we have 
\begin{equation}
\L(c_j)\longrightarrow \ell \hbox{ as } j\longrightarrow \infty.
\end{equation}
Due to~\cite{myers1945arcs} we can without loss of generality assume that curves $c_j$ converge uniformly to a rectifiable curve $c_\infty$, from $z$ to $x'$, that satisfies
\begin{equation}
\L(c_\infty)\leq \ell.
\end{equation} 
Then \cite[Proposition 11.3.1]{shen2001lectures} implies $c_\infty=\stackrel{\leftarrow} {\gamma}_{z,-\eta}|_{[0,\ell]}$, since otherwise there would exist a distance minimizing curve of $(M,\stackrel{\leftarrow} {F})$ from $z$ to $x$ that is not $C^1$-smooth at $x'$. Since $c_j \to\stackrel{\leftarrow} {\gamma}_{z,-\eta|_{[0,\ell]}}$ uniformly in $[0,\ell]$ there exists $J\in \N$ such that for all $j\geq J $ we have 
$( \stackrel{\leftarrow}{\exp}_z)^{-1}(x_j)\in V$ and the image of $c_j$ is contained in $U$. Thus after unit speed reparametrization of $c_j$ we have, $c_j(t)=\stackrel{\leftarrow} {\gamma}_{z,\xi_{h_j}}(t)$ for any $t\in [0,\ell-\epsilon_j]$. 

We recall that above we had $\ell=\tau_{exit}(x,v)-s$. The claim of this lemma follows using a diagonal argument for sequence $(\epsilon^j_i)_{i=1}^\infty,(h^j_i)_{i=1}^\infty\subset (0,1)$ which are chosen as above for $s_j\in (0,\tau_{exit}(x,v)),\: j \in \N$, such that $s_j \to 0$ when $j\to 0$.
\end{proof}

\subsection{Finsler structure} 

In this section, we prove that the data~\eqref{eq:data} determine the  set $\widehat{G}(M,F)$, (see~\eqref{eq:good_set}) and the  Finsler function $F$ on it. Again we deal separately with interior and boundary cases.

\begin{Lemma}
Let $x \in M^{int}$. The set $T_x M \cap \widehat{G}(M, F)$ contains an open non-empty set. Moreover  the data~\eqref{eq:data} determine the set $T_x M \cap \widehat{G}(M, F)$ and $F$ on it.
\end{Lemma}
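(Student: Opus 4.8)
The plan is to combine the smooth-coordinate construction from Lemma~\ref{Le:interior_coordinates} with the characterization of $\widehat G(M,F)$ via smoothness of boundary distance functions in order to show that the data see both the set $T_xM\cap\widehat G(M,F)$ and the values of $F$ there. First I would recall that by Lemma~\ref{Le:interior_coordinates} there is a closest boundary point $z_1$ to $x$ and a direction $v_0\in S_xM$ whose geodesic realizes $d_F(x,z_1)$ minimally and without conjugate points; perturbing $\ell_x(v_0)$ in $T_x^*M$ by covectors orthogonal to $v_0$ (exactly as in the paragraph preceding that lemma) produces, for each small nonzero parameter, a boundary point $z$ whose distance function $d_F(\cdot,z)$ is smooth near $x$ and whose differential at $x$ is a prescribed covector on the dual unit sphere $S_x^*M$. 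In this way the set of unit vectors $v=\ell_x^{-1}(d(d_F(\cdot,z))|_x)$ obtained as $z$ ranges over $\p M$ with $d_F(\cdot,z)$ smooth at $x$ contains an open nonempty subset of $S_xM$; intersecting with $\p_{out}TM=\emptyset$ at interior $x$, this is precisely an open nonempty subset of $T_xM\cap\widehat G(M,F)$ after homogeneous extension.

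For the determination statement, the key point is to express membership in $T_xM\cap\widehat G(M,F)$ and the value $F(x,y)$ purely in terms of the recovered data. By~\eqref{eq:distance_func} the data determine the function $d_F(\cdot,z)\colon M\to\R$ for every $z\in\p M$ (through $E_z\circ\mathcal R$), and by Lemma~\ref{Le:interior_coordinates} they also determine, near any interior $x$, a smooth coordinate system built from finitely many such distance functions. Hence the data determine whether $d_F(\cdot,z)$ is $C^\infty$ at $x$, and when it is, they determine its differential $d(d_F(\cdot,z))|_x\in T_x^*M$ (a coordinate computation in the recovered chart). A unit vector $v\in S_xM$ lies in $\widehat G(M,F)$ exactly when $v=\ell_x^{-1}(p)$ for some $p$ of the form $d(d_F(\cdot,z))|_x$ with $z\in\p M$ and $d_F(\cdot,z)$ smooth at $x$; this is the definition~\eqref{eq:good_set} combined with the fact (Lemma~\ref{Le:differential_of_dist_func} in the appendix) that the differential of such a distance function at $x$ equals $\ell_x$ applied to the velocity of the minimizing geodesic, which exits at $z(x,v)$. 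So the data determine a subset $S\subset T_x^*M$ of recoverable covectors; then $T_xM\cap\widehat G(M,F)\cap S_xM=\ell_x^{-1}(S)$, and to recover $F$ on the cone over this set it suffices to note that $F(x,v)=1$ on $S_xM$ and $F(x,ay)=aF(x,y)$ for $a>0$, so the cone structure fixes $F$ once the unit vectors are known. The one subtlety is that $\ell_x$ itself is a priori unknown; but for $p\in S$ the vector $\ell_x^{-1}(p)$ is exactly the initial velocity of the (smooth, hence unique near $x$) distance-minimizing geodesic from $x$ to the corresponding boundary point, and that geodesic — indeed the whole family $\gamma_{x,v_s}$ from the construction before Lemma~\ref{Le:interior_coordinates} — is detectable from the data as the variation of the coordinate representation, so $\ell_x^{-1}$ restricted to $S$ is determined; composing, $F$ is determined on the cone over $\ell_x^{-1}(S)=T_xM\cap\widehat G(M,F)$.

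I expect the main obstacle to be making precise, without circularity, the assertion that the data determine the differentials $d(d_F(\cdot,z))|_x$ and the inverse Legendre images $\ell_x^{-1}(p)$ \emph{intrinsically}, i.e. in a way invariant under the identification $\Psi$ of the two manifolds. The cleanest route is: (i) the data give the smooth manifold structure and the functions $d_F(\cdot,z)$, hence all their differentials where they are smooth, as genuine elements of $T^*M$; (ii) the cotangent unit sphere $S_x^*M$ is then recovered as the closure of $\{d(d_F(\cdot,z))|_x : z\in\p M,\ d_F(\cdot,z)\in C^\infty\text{ near }x\}$ for those $x$ where enough such $z$ exist, and in general by the coordinate construction of Lemma~\ref{Le:interior_coordinates} which always supplies an open piece; (iii) $F^*$, hence $F$ on the corresponding open cone, is read off as the Minkowski functional of $S_x^*M$ pulled back by $\ell_x$, which is itself reconstructed from the geodesic variation. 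Everything else — openness and nonemptiness of the piece in $T_xM$, the homogeneity argument extending unit vectors to the full cone — is routine given Lemmas~\ref{Le:interior_coordinates}, \ref{Le:boundary_vs_normal_cut_dist}, and Proposition~\ref{Le:closure_of_Gs}.
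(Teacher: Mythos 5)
Your proposal follows the paper's overall plan closely — recover the differentials $d(d_F(\cdot,z))|_x$ from the data in the coordinate chart of Lemma~\ref{Le:interior_coordinates}, observe that they populate (a piece of) the dual unit sphere, and relate them to velocities of minimizing geodesics via the Legendre transform. But the crucial step is exactly the "one subtlety" you flag, and your proposed fix for it does not work as stated.

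You assert that $\ell_x^{-1}$ restricted to the recovered covectors is determined because the distance-minimizing geodesic $\gamma_{x,v}$ (and the family $\gamma_{x,v_s}$) is "detectable from the data as the variation of the coordinate representation." This is circular: to read off a geodesic's initial velocity from $d(d_F(\cdot,z))|_x$ you need precisely the map $\ell_x^{-1}$ you are trying to recover, and to trace the geodesic by steepest descent of a distance function you again need the Finsler metric (or its Legendre transform) to turn covectors into vectors. Nothing in the data gives you the geodesic's parametrized image directly — only the distance functions $d_F(\cdot,z)$ and their differentials are available. The paper's proof closes exactly this gap with a concrete, non-circular mechanism: since $A(x)$ (the set of recovered differentials) sits on the dual unit sphere, the dual Finsler function $\bigl(\stackrel{\leftarrow}{F}\bigr)^{\!*}$ is identically $1$ on $A(x)$, hence determined on the cone $\R_+A(x)$ by homogeneity; then the inverse Legendre transform is obtained as an explicit expression in derivatives of $\bigl(\bigl(\stackrel{\leftarrow}{F}\bigr)^{\!*}\bigr)^2$ — formula~\eqref{eq:recover_of_Legendre} — which is purely a function of recovered data and requires no auxiliary detection of geodesics. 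Without this (or some equivalent) step, the argument "so $\ell_x^{-1}$ restricted to $S$ is determined" is unjustified.

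A secondary point: you work with $\ell_x$, the Legendre transform of $F$, applied to $p=d(d_F(\cdot,z))|_x$. Since $d_F(\cdot,z)=d_{\stackrel{\leftarrow}{F}}(z,\cdot)$, Lemma~\ref{Le:differential_of_dist_func} applied to the reversed metric gives $d(d_{\stackrel{\leftarrow}{F}}(z,\cdot))|_x=\stackrel{\leftarrow}{\ell_x}(-w)$ where $w$ is the exit direction; for a non-reversible $F$ one has $\stackrel{\leftarrow}{\ell_x}(-w)\neq -\ell_x(w)$ in general, so the identification you write does not hold with the unreversed Legendre transform. The paper is careful to phrase everything in terms of $\stackrel{\leftarrow}{\ell_x}$ and $\bigl(\stackrel{\leftarrow}{F}\bigr)^{\!*}$ and only at the very end passes back to $F$ via $F(x,y)=\stackrel{\leftarrow}{F}(x,-y)$. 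The non-emptiness and openness of $T_xM\cap\widehat{G}(M,F)$ via the closest boundary point and Lemma~\ref{Le:boundary_vs_normal_cut_dist} are handled as you indicate.
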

\begin{proof}
We let  $z_x\in \p M$ be a closest boundary point to $x$. By Lemma~\ref{Le:boundary_vs_normal_cut_dist} the function $d_F(\cdot,z_x)$ is smooth at $x$. Moreover 
\begin{equation}
v:=-\dot{\stackrel{\leftarrow}{\gamma}}_{z_x,\stackrel{\leftarrow}{\nu}_{in}(z_x)}(d_F(x,z_x)) \in S_xM \cap \widehat{G}(M,F).
\end{equation}
Thus the set $S_xM \cap \widehat{G}(M,F)$ is not empty.  By Corollary~\ref{cor:39} there exists a neighborhood $U\subset S_xM$ of $v$ that is contained in $\widehat{G}(M,F)$. 

Next, we prove the latter claim. We let $z \in \p M$ be such that the function $d_F(\cdot,z)$ is smooth at $x$. 
We let $v \in \stackrel{\leftarrow}{ S_x M}$. Then
\begin{equation}
\label{eq:F_star }
d(d_{\stackrel{\leftarrow}{F}}(z,\cdot))\bigg|_x = \stackrel{\leftarrow}{g_{v}}(v,\cdot)=  \stackrel{\leftarrow}{\ell_x}(v) \quad \hbox{if and only if} \quad  \gamma_{x,-v}(d_F(x,z))=z,
\end{equation}
where $g_{y}(\cdot,\cdot)$ is the hessian of $\frac 12 F^2(x,y)$ with respect to $y$ variables and $\stackrel{\leftarrow}{\ell_x}$ is the Legendre transform of Finsler function $\stackrel{\leftarrow}{F}$ at $x$. The property~\eqref{eq:F_star } implies that the set
\begin{equation}
\label{eq:F_star_2}
A(x):= \{d(d_{\stackrel{\leftarrow}{F}}(z,\cdot))\bigg|_x: z \in \p M, \: d_{\stackrel{\leftarrow}{F}}(z,\cdot) \hbox{ is $C^\infty$ at } x\} 
\end{equation}
satisfies 
\begin{equation}
A(x)=\:\stackrel{\leftarrow}{\ell_x}(\stackrel{\leftarrow}{ S_x M} \cap (-\widehat{G}(M,F))).
\end{equation}
Since the Legendre transform is an isometry the dual map $\bigg(\!\!\stackrel{\leftarrow}{F}\!\!\bigg)^\ast$ is constant $1$ on $A(x)$.  As the function $\bigg(\!\!\stackrel{\leftarrow}{F}\!\!\bigg)^\ast$ is positively homogeneous of order $1$ we have determined $\bigg(\!\!\stackrel{\leftarrow}{F}\!\!\bigg)^\ast$ on $\R_+A(x):=\{r p \in T^\ast_xM: p \in A(x), \: r>0\}$. Recall that the components of  the Legendre satisfy
\begin{equation}
\label{eq:recover_of_Legendre}
\bigg((\stackrel{\leftarrow}{\ell_x})^{-1}(p))\bigg)_j=\frac{1}{2}\bigg(\frac{\p}{\p p^i}\frac{\p}{\p p^j}\bigg(\bigg(\!\!\stackrel{\leftarrow}{F}\!\!\bigg)^\ast\bigg)^2(x,p)\bigg)p_i  
\end{equation}
for all $p \in T_x^\ast M.$ Since $\bigg(\!\!\stackrel{\leftarrow}{F}\!\!\bigg)^\ast$ is recovered on $\R_+A(x)$ the equation~\eqref{eq:recover_of_Legendre} determines $(\stackrel{\leftarrow}{\ell_x})^{-1}$ on $\R_+A(x)$. Therefore,
\begin{equation}
\begin{split}
&T_x M \cap (-\widehat{G}(M,F))=(\stackrel{\leftarrow}{\ell_x})^{-1}(\R_+A(x)) \hbox{ and } 
\\ 
&\stackrel{\leftarrow}{F}\; =\bigg(\bigg(\!\!\stackrel{\leftarrow}{F}\!\!\bigg)^\ast\bigg)^\ast \hbox{ on } T_x M \cap(- \widehat{G}(M,F)).
\end{split}
\end{equation}
Finally,
\begin{equation}
\begin{split}
&F(x,y)=\;\stackrel{\leftarrow}{F}(x,-y).
\end{split}
\end{equation}
This concludes the proof.
\end{proof}

\begin{Lemma}
\label{Le:recovery_of_F_in_interior}
Let $x \in M^{int}_1$. Then 
\begin{equation}
\widehat{G}(M_1, F_1)\cap T_xM_1=\widehat{G}(M_1,\Psi^\ast F_2)\cap T_xM_1
\end{equation}
and
\begin{equation}
F_1(y)=F_2(\Psi_\ast y), \quad y \in (\widehat G(M_1, F_1)\cap T_xM_1).
\end{equation}
\end{Lemma}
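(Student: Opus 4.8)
The plan is to transport the previous lemma through the diffeomorphism $\Psi$ and use the fact that the boundary distance data are intertwined by $\Psi$ and $\phi$. First I would record the key identity: by definition~\eqref{eq:map_Psi} of $\Psi$ together with~\eqref{eq:BDD_agree_1}, for every $x \in M_1$ and every $z \in \p M_1$ we have $d_{F_1}(x,z) = d_{F_2}(\Psi(x),\phi(z))$; equivalently, writing $\widetilde F_2 := \Psi^\ast F_2$ for the pulled-back Finsler function on $TM_1$ and noting $\phi = \Psi|_{\p M_1}$ (Theorem~\ref{Th:topology}), we get $d_{F_1}(x,z) = d_{\widetilde F_2}(x,z)$ for all $x \in M_1$, $z \in \p M_1$. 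The point is that $(M_1,\widetilde F_2)$ is a smooth compact connected Finsler manifold with smooth boundary whose boundary distance function to $\p M_1$ agrees identically with that of $(M_1,F_1)$.

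Next I would apply the previous lemma (the one immediately preceding this one) separately to the two Finsler manifolds $(M_1,F_1)$ and $(M_1,\widetilde F_2)$ on the same underlying manifold $M_1$. For a fixed interior point $x \in M_1^{int}$, that lemma expresses both the set $T_xM_1 \cap (-\widehat G(M_1,F_1))$ and the restriction of $\stackrel{\leftarrow}{F_1}$ to it purely in terms of the family of differentials $d\big(d_{\stackrel{\leftarrow}{F_1}}(z,\cdot)\big)\big|_x$, ranging over those $z \in \p M_1$ for which $d_F(\cdot,z)$ is smooth at $x$ — i.e.\ in terms of the set $A(x)$ of~\eqref{eq:F_star_2}. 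But $A(x)$ is built only from the functions $d_{F_1}(\cdot,z) = d_{\widetilde F_2}(\cdot,z)$ near $x$, so the set $A(x)$ is literally the same whether computed with $F_1$ or with $\widetilde F_2$: smoothness of $x' \mapsto d_{F_1}(x',z)$ at $x$ is the same condition as smoothness of $x' \mapsto d_{\widetilde F_2}(x',z)$ at $x$, and when it holds the differentials coincide. Running the reconstruction formulas~\eqref{eq:F_star }--\eqref{eq:recover_of_Legendre} of the previous lemma for each of the two metrics therefore yields
\[
T_xM_1 \cap \widehat G(M_1,F_1) = (\stackrel{\leftarrow}{\ell_x^{F_1}})^{-1}(\R_+A(x)) \cdot(-1)
= (\stackrel{\leftarrow}{\ell_x^{\widetilde F_2}})^{-1}(\R_+A(x))\cdot(-1) = T_xM_1 \cap \widehat G(M_1,\widetilde F_2),
\]
and on this common set $\stackrel{\leftarrow}{F_1} = (\stackrel{\leftarrow}{F_1}{}^\ast)^\ast = (\stackrel{\leftarrow}{\widetilde F_2}{}^\ast)^\ast = \stackrel{\leftarrow}{\widetilde F_2}$, hence $F_1(y) = \widetilde F_2(y) = F_2(\Psi_\ast y)$ for $y$ in that set.

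The only genuinely delicate point is making precise the claim that "$A(x)$ depends only on the boundary distance functions near $x$": one must check that $\widehat G$, the smooth locus of $d_F(\cdot,z)$ at $x$, and the values of the differential there, are all determined by the germ at $x$ of the function $d_F(\cdot,z)$ for each fixed $z \in \p M_1$ — and this is exactly what the previous lemma established (its proof only ever used the distance functions $d_{\stackrel{\leftarrow}{F}}(z,\cdot)$, via the evaluation maps $E_z$, never the ambient $F$ directly). Granting that, the argument is a clean transport: apply the previous lemma twice, once per metric, compare outputs. The main obstacle, if any, is bookkeeping the two Legendre transforms $\stackrel{\leftarrow}{\ell_x^{F_1}}$ and $\stackrel{\leftarrow}{\ell_x^{\widetilde F_2}}$ and their duals carefully enough to see that they produce the same preimage of $\R_+A(x)$; but since both are recovered by the very same formula~\eqref{eq:recover_of_Legendre} applied to the very same set $\R_+A(x)$, this is immediate. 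Finally, combining with Proposition~\ref{Le:closure_of_Gs} and Theorem~\ref{Th:diffeo} closing over all $x \in M_1^{int}$ will give the statement of Theorem~\ref{Th:smooth} on $\overline{G(M_1,F_1)} = \overline{G(M_1,\Psi^\ast F_2)}$.
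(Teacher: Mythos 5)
Your proposal is correct and takes essentially the same approach as the paper's proof: both exploit the identity $d_{F_1}(\cdot,\cdot)|_{M_1\times\partial M_1}=d_{\Psi^\ast F_2}(\cdot,\cdot)|_{M_1\times\partial M_1}$, observe that the smooth locus and the differentials of the distance functions are therefore the same for both Finsler functions, and then invoke the reconstruction formulas \eqref{eq:F_star }--\eqref{eq:recover_of_Legendre} from the preceding lemma. Your rephrasing in terms of the pulled-back Finsler function $\widetilde F_2 = \Psi^\ast F_2$ on $M_1$ is just a tidier way of saying what the paper compresses into ``applying the differential to both sides and using \eqref{eq:F_star }--\eqref{eq:recover_of_Legendre}.''
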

\begin{proof}
The mapping $\Psi$ is a diffeomorphism that satisfies
\begin{equation}
\label{eq:connection_of_distat_func}
d_{F_2}(\Psi(\cdot), \phi(\cdot))|_{M_1 \times \p M_1}=d_{F_1}(\cdot, \cdot)|_{M_1 \times \p M_1}.
\end{equation} 
Thus for any $z \in \p M_1$ the function $d_{F_2}(\Psi(\cdot), \phi(z))$ is smooth at $\Psi(x)$ if and only if $d_{F_1}(\cdot, z)$ is smooth at $x$. Therefore, the claims follow by applying the differential of $M_1$ to the both sides of~\eqref{eq:connection_of_distat_func} and using~\eqref{eq:F_star }--\eqref{eq:recover_of_Legendre}.
\end{proof}

\medskip 

Next, we consider the boundary case.

\begin{Lemma}
Let $x \in \p M$.
Then data~\eqref{eq:data} determine $F$ on 
\begin{equation}
\p_{out}TM \cap T_xM=\{y \in T_xM: g_{\nu_{in}}(\nu_{in},y)<0\}.
\end{equation}
\end{Lemma}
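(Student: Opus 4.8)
The idea is to deduce the boundary case from the interior recovery in Lemma~\ref{Le:recovery_of_F_in_interior} by a limiting argument along a geodesic that exits $M$ at $x$. As in Theorems~\ref{Th:smooth}--\ref{Th:diffeo}, ``the data determine $F$ on a set $S$'' means that $F_1=\Psi^\ast F_2$ on $S$, with $\Psi$ the diffeomorphism of Theorem~\ref{Th:diffeo}; so we must show $F_1(x,y)=F_2(\Psi(x),\Psi_\ast y)$ for every $y\in T_xM_1$ with $g_{\nu_{in}}(\nu_{in},y)<0$.

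First I would dispose of the description of the set. The covector $g_{\nu_{in}}(\nu_{in},\cdot)\in T_x^\ast M$ annihilates $T_x\p M$ and satisfies $g_{\nu_{in}}(\nu_{in},\nu_{in})=F(x,\nu_{in})^2=1$; hence it equals $c\,df_x$ for a boundary defining function $f$ (positive in $M^{int}$) and a constant $c>0$, since $df_x(\nu_{in})>0$. Therefore $g_{\nu_{in}}(\nu_{in},y)<0$ if and only if $df_x(y)<0$, i.e.\ if and only if $y$ points strictly outward, so $\{y:g_{\nu_{in}}(\nu_{in},y)<0\}=\p_{out}TM\cap T_xM$.

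Now fix $y$ with $df_x(y)<0$ and extend $(M,F_1)$ to a closed Finsler manifold $(N,H)$ with $x\in N^{int}$. Let $\tilde\gamma$ be the $H$-geodesic with $\tilde\gamma(0)=x$ and $\dot{\tilde\gamma}(0)=y$. Since $f(\tilde\gamma(s))=s\,df_x(y)+O(s^2)$ and $df_x(y)<0$, for all sufficiently small $t>0$ the segment $\tilde\gamma([-t,0))$ lies in $M^{int}$; put $q_t:=\tilde\gamma(-t)\in M^{int}$ and $v_t:=\dot{\tilde\gamma}(-t)\in T_{q_t}M$. I claim $(q_t,v_t)\in\widehat G(M_1,F_1)$ (see~\eqref{eq:good_set}) for all small $t$. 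Indeed, for $t$ below the cut distance of $(N,H)$ at $(x,y)$ the segment $\tilde\gamma|_{[-t,0]}$ is the unique $H$-minimiser from $q_t$ to $x$ and carries no conjugate points; being contained in $M$, it is then also the unique $F_1$-minimiser from $q_t$ to $x$ within $M$ (any competitor has the same $H$-length and would be a second $H$-minimiser), so $d_{F_1}(q_t,x)=t\,F_1(x,y)$ equals the exit time of $\gamma_{q_t,v_t}$, with $\gamma_{q_t,v_t}(0,t)\subset M^{int}$, and the geodesic meets $\p M$ transversally at $x$ since $\dot{\tilde\gamma}(0)=y$ is transversal. Finally $d_{F_1}(\cdot,x)$ coincides near $q_t$ with $d_H(\cdot,x)$, which is $C^\infty$ there by the standard smoothness criterion for distance functions to a non-conjugate point joined by a unique minimiser (Lemma~\ref{Le:differential_of_dist_func}). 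Hence $(q_t,v_t)\in\widehat G(M_1,F_1)$.

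Applying Lemma~\ref{Le:recovery_of_F_in_interior} at the interior point $q_t$ gives $F_1(q_t,v_t)=F_2(\Psi(q_t),\Psi_\ast v_t)$ for every small $t>0$. Letting $t\to0^+$ we have $q_t\to x$ and $v_t\to y$ in $TM_1$, and since $\Psi$, $\Psi_\ast$, $F_1$ and $F_2$ are continuous we conclude $F_1(x,y)=F_2(\Psi(x),\Psi_\ast y)$. The step requiring real care is the claim $(q_t,v_t)\in\widehat G(M_1,F_1)$: one must guarantee that the backward geodesic segment stays inside $M$ and is genuinely a distance minimiser there (not merely a short geodesic), which is why the argument is run inside the extension $(N,H)$ and localised below its cut distance; the admissible range of $t$ shrinks as $y$ tends to a tangential direction, but this is harmless because $y$ is chosen before $t$.
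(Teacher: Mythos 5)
Your proof is essentially correct, but it takes a genuinely different route from the paper. The paper's argument is short and direct: it picks an arbitrary smooth curve $c$ with $c(b)=x$, $\dot c(b)=y$ and $c((a,b))\subset M^{int}$, and uses the first-order Taylor expansion of the exponential map to obtain
$$
\lim_{t\to b}\frac{d_F(c(t),x)}{b-t}=F(x,y),
$$
which expresses $F(x,y)$ directly in terms of boundary distance functions already recovered from the data (this is formula~\eqref{eq:F_on_boundary}). Your argument instead extends to $(N,H)$, runs the geodesic with initial condition $(x,y)$ backwards a short time $t$ to land at an interior point $q_t$, verifies that $(q_t,v_t)\in\widehat G(M_1,F_1)$, invokes the interior recovery Lemma~\ref{Le:recovery_of_F_in_interior}, and then lets $t\to0^+$. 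That chain is sound: the Taylor argument for $f\circ\tilde\gamma$ gives $\tilde\gamma([-t,0))\subset M^{int}$, the cut-distance bound (which you should more precisely phrase in terms of $\tau(q_t,v_t)$ or lower semicontinuity of $\tau$ rather than the cut distance at $(x,y)$) gives uniqueness and absence of conjugate points, and the local identification $d_{F_1}(\cdot,x)=d_H(\cdot,x)$ near $q_t$ gives the required smoothness — though the lemma to cite for the latter is Lemma~\ref{Le:smoothness_of_the_distance_function}, not Lemma~\ref{Le:differential_of_dist_func}, which only computes the differential once smoothness is known. Your approach is heavier machinery — it relies on $\widehat G$, the interior recovery lemma, and a minimiser-stays-in-$M$ argument reminiscent of the proof of Lemma~\ref{Le:transverse_exit_direction_and_smooth_dist} — whereas the paper's proof is a one-line Taylor computation needing none of that. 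On the other hand, your version directly produces the two-manifold statement $F_1(x,y)=F_2(\Psi(x),\Psi_\ast y)$, effectively merging this lemma with Lemma~\ref{Le:recovery_of_F_on_boundary}, and your opening paragraph supplies the verification of the set identity $\p_{out}TM\cap T_xM=\{y:g_{\nu_{in}}(\nu_{in},y)<0\}$, which the paper leaves implicit. Both approaches reach the same conclusion.
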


\begin{proof}
We let   $y \in T_xM \setminus \{0\}$ be an outward pointing vector that is not tangential to the boundary. We let $b>a\geq 0$ and choose any smooth curve $c\colon [a,b] \to M$ such that
\begin{equation}
c((a,b)) \subset M^{int}, \quad c(b)=x, \: \dot{c}(b)=y. 
\end{equation}
Recall that with respect to the geodesic coordinates at $x$ we have $d_F(x,c(t))=F(\exp^{-1}_x(c(t)))$. Since $F$ is continuous we have
\begin{equation}
\label{eq:F_on_boundary}
\lim_{t \to b}\frac{d_F(x,c(t))}{b-t}
=F(\dot c(b))=F(y).
\end{equation}
Since  $y \in T_xM \setminus \{0\}$ was an arbitrary outward pointing vector the data~\eqref{eq:data} and~\eqref{eq:F_on_boundary} determine $F$ on the set $\p_{out}TM \cap T_xM.$
\end{proof}

\begin{Lemma}
\label{Le:recovery_of_F_on_boundary}
For any $(x,y) \in \p_{out}TM_1$ it holds that 
$$
F_1((x,y))=F_2(\Psi_\ast (x,y)).
$$
\end{Lemma}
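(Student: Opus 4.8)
The plan is to combine the boundary-distance identity~\eqref{eq:connection_of_distat_func} with the local limit formula that underlies the proof of the preceding lemma. Fix $(x,y)\in\p_{out}TM_1$; thus $x\in\p M_1$ and $y\in T_xM_1$ is outward pointing and not tangential to $\p M_1$. I would first choose a smooth curve $c\colon[a,b]\to M_1$ with $c((a,b))\subset M_1^{int}$, $c(b)=x$ and $\dot c(b)=y$ (possible exactly because $y$ is transversal and outward). Exactly as in the proof of the preceding lemma, but writing the distance in the interior-to-boundary direction and using geodesic normal coordinates centred at the \emph{interior} point $c(t)$ — so that $d_{F_1}(c(t),x)=F_1\big(c(t),\exp_{c(t)}^{-1}(x)\big)$ with $\exp_{c(t)}^{-1}(x)=(b-t)\,y+o(b-t)$ — continuity and fibrewise $1$-homogeneity of $F_1$ give
\begin{equation}
\label{eq:bd_limit_formula}
F_1(x,y)=\lim_{t\to b^-}\frac{d_{F_1}(c(t),x)}{b-t}.
\end{equation}

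Next I would transport the curve by $\Psi$. By Theorem~\ref{Th:diffeo}, $\Psi\colon M_1\to M_2$ is a diffeomorphism with $\Psi|_{\p M_1}=\phi$, so it maps $M_1^{int}$ onto $M_2^{int}$ and $\p M_1$ onto $\p M_2$ and carries transversal outward vectors to transversal outward vectors. Hence $\Psi\circ c\colon[a,b]\to M_2$ is a smooth curve with $(\Psi\circ c)((a,b))\subset M_2^{int}$, $(\Psi\circ c)(b)=\phi(x)\in\p M_2$, and fibre velocity at $t=b$ equal to $(d\Psi)_x y$, i.e.\ to the vector part of $\Psi_\ast(x,y)$, which is outward and not tangential at $\phi(x)$. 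Applying~\eqref{eq:bd_limit_formula} to $(M_2,F_2)$ and this curve yields
\begin{equation}
F_2\big(\Psi_\ast(x,y)\big)=\lim_{t\to b^-}\frac{d_{F_2}\big(\Psi(c(t)),\phi(x)\big)}{b-t}.
\end{equation}

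Finally, for each $t$ I would apply~\eqref{eq:connection_of_distat_func} with the interior point $p=c(t)\in M_1$ and the boundary point $z=x\in\p M_1$, obtaining $d_{F_2}(\Psi(c(t)),\phi(x))=d_{F_1}(c(t),x)$; combining this with the two displays above gives $F_2(\Psi_\ast(x,y))=F_1(x,y)$, as claimed. I expect the only genuinely delicate point to be the justification of the limit formula~\eqref{eq:bd_limit_formula} at the boundary point: one must know that for $t$ near $b$ the infimum defining $d_{F_1}(c(t),x)$ is attained (up to an $o(b-t)$ error) by a short geodesic lying near $x$ — e.g.\ by passing to an extension $(N,H)$ of $(M_1,F_1)$ and noting that $d_{F_1}(c(t),x)=d_H(c(t),x)$ once $t$ is close enough to $b$, with the piece $c|_{[t,b]}$ furnishing the matching upper bound — after which continuity and fibrewise homogeneity of $F_1$ produce the stated limit. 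This is the same kind of estimate as in the proof of the preceding lemma, so I would simply refer to it; everything else is routine bookkeeping about how $\Psi$ acts on curves, interior points and transversal vectors.
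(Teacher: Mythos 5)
Your proof is correct and takes essentially the same route as the paper's, which simply says the claim ``follows from~\eqref{eq:connection_of_distat_func} and~\eqref{eq:F_on_boundary}''; you have supplied exactly the bookkeeping that one-liner leaves implicit. One small remark: you orient the boundary-limit formula as $d_{F_1}(c(t),x)$ (interior point to boundary point), which is the direction both covered by~\eqref{eq:connection_of_distat_func} and yielding $F_1(x,y)$ cleanly via $\exp_{c(t)}^{-1}(x)\approx(b-t)y$; the paper's display~\eqref{eq:F_on_boundary} is written as $d_F(x,c(t))$, which for a non-reversible $F$ would naively produce $F(-y)$, so your orientation is the more careful one.
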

\begin{proof}
The claim follows from~\eqref{eq:connection_of_distat_func} and~\eqref{eq:F_on_boundary}.
\end{proof}

Now we are ready to give a proof of Theorem~\ref{Th:smooth}.

\begin{proof}[Proof of Theorem~\ref{Th:smooth}]
By theorems~\ref{Th:topology} and~\ref{Th:diffeo} the  map $\Psi\colon M_1 \to M_2$ is a diffeomorphism, and
the pullback $\Psi^*F_2$ of $F_2$ gives a Finsler function on $M_1$. By Lemmas~\ref{Le:recovery_of_F_in_interior} and~\ref{Le:recovery_of_F_on_boundary} we have proved that  $\overline{G(M_1,F_1)}$ and $\overline{ G(M_1,\Psi^*F_2)}$ coincide and in this set $F_1=\Psi^*F_2$.

\medskip 
We still have to show that the data~\eqref{eq:data} are not sufficient to guarantee that $F_1$ and $F_2$ coincide in $TM_1^{int}\setminus \overline{G(M_1,F)}$. We denote a manifold $M_1$ by $M$ and a Finsler function $F_1$ by $F$. If  $TM^{int}\setminus \overline{G(M,F)}$ is not empty, we choose $(x_0,v_0) \in TM^{int}\setminus \overline{G(M,F)}, \:F(v_0)=1$ and a neighborhood $V \subset TM^{int}\setminus \overline{G(M,F)}$ of $(x_0,v_0)$ such that
\begin{equation}
\label{eq:V_is_far_from_pM}
\dist_{F}(\pi(V), \p M)>0.
\end{equation}
We denote the radial projection of $V$ to the unit sphere bundle of $(M,F)$ by~$W$.
We let $\alpha \in C_0^ \infty(W)$ be non-negative and  define a function
\begin{equation}
\label{eq:distorted_Fins}
H\colon \R \times TM \to \R, \quad H(s,y)=\bigg(1+s \alpha\bigg(\frac{y}{F(y)}\bigg)\bigg)F(y).
\end{equation}
We show that there exists $\epsilon >0$ such that for any $s\in (-\epsilon,\epsilon)$ the function $H(s,\cdot)\colon TM \to \R$ is a Finsler function. 

\medskip
Since $\alpha$ is compactly supported it holds,  for $|s|$ small enough, that  $H(s,\cdot)$ is non-negative, continuous and $H(s,y)=0$ if and only if $y=0$. Moreover $H(s,\cdot)$ is smooth outside the zero section of $M$.  Clearly also the scaling property $H(s,ty)=tH(s,y), \: t>0$ is valid. 

We let $(x,y)$ be a smooth coordinate system of $TM$ near $(x_0,v_0)$. To prove that $H(s,\cdot)$ is a Finsler function, we  have to show that for every $(x,y) \in TM \setminus \{0\}$ the Hessian 
\begin{equation}
\frac{1}{2}\frac{\p}{\p y^i}\frac{\p}{\p y^j}H^2(s,(x,y))=\frac{1}{2} \frac{\p}{\p y^i}\frac{\p}{\p y^j}\bigg[\bigg(1+s \alpha\bigg(\frac{y}{F(y)}\bigg)\bigg)^2\bigg(F(y)\bigg)^2\bigg]
\end{equation}
is symmetric and positive definite. Since $H^2(s,(x,\cdot))\colon T_xM \to \R$ is smooth outside $0$ it follows that the Hessian is symmetric,
and $\alpha \in C^\infty_0(W)$ implies
\begin{equation}
\begin{split}
\frac{1}{2} \frac{\p}{\p y^i}\frac{\p}{\p y^j}H^2(s,(x,y))&
 = g_{ij}(x,y) +\mathcal{O}(s)
 \end{split}
\end{equation}
where $g_{ij}$ is the Hessian of $\frac{1}{2}F^{2}$. Therefore, for $|s|$ small enough $H(s,\cdot)\colon TM \to \R$ is a Finsler function. 

\medskip
We let $\epsilon >0$ be so small that $H(s,\cdot)$ is a Finsler function for $s \in (0,\epsilon)$. We prove that for any $x \in M$ and $z \in \p M$
\begin{equation}
\label{eq:boundary_data_round_metric_and_distorted_metric}
d_{H(s,\cdot)}(x,z)= d_{F}(x,z).
\end{equation}
This implies that the boundary distance data of $F$ and $H(s,\cdot)$ coincide. 

If $c\colon [0,1] \to M$ is any piecewise $C^1$-smooth curve,~\eqref{eq:distorted_Fins} implies
\begin{equation}
\label{eq:lenghts_of_round_metric_and_distorted_metric}
\mathcal{L}_{F}(c)\leq \mathcal{L}_{H(s,\cdot)}(c).
\end{equation}
We let $x \in M$ and $z \in \p M$. Since $M$ is compact there exists a $F$-distance minimizing curve $c\colon [0,d_{F}(x,z)] \to M$ from $x$ to $z$. We let $I, J \subset [0,d_{F}(x,z)] $ be a partition $[0,d_{F}(x,z)]$ such that
\begin{equation}
c(t)\in M^{int} \quad \hbox{ if and only if } \quad t\in I.
\end{equation}
Then $I$ is open in $[0,d_{F}(x,z)]$ and $J$ is closed. On set $I$ the curve $c$ is a union of distance minimizing geodesic segments of $F$ which have end points in $\p M$. Thus for any  $t \in I$ we have $\dot{c}(t) \in G(M,F)$. This and~\eqref{eq:V_is_far_from_pM} imply 
\begin{equation}
d_{F}(x,z)=\mathcal{L}_{F}(c)= \mathcal{L}_{H(s,\cdot)}(c),
\end{equation}
and the equation~\eqref{eq:boundary_data_round_metric_and_distorted_metric} follows from~\eqref{eq:lenghts_of_round_metric_and_distorted_metric}.
\end{proof}

\section{Completion of the proof of Lemma~\ref{Le:prop_boundary_cut}}
\label{Se:Appendix2}

Most of Lemma~\ref{Le:prop_boundary_cut} was proven after its statement, but the final claim was deferred here.
The proof of the missing claim is presented at the end of this section, once we have introduced all the tools we use.

In this section, we denote by $(N,F)$ a compact, connected smooth Finsler manifold without boundary. We present the second variation formula in the case
when the variation curves start from a smooth submanifold $S$ of
$N$. We introduce the concept of a focal distance and connect it to
the degeneracy of the normal exponential map $\exp^\perp$ of surface
$S$.

\medskip
We define a pullback vector bundle $\pi^\ast TN$ over $TN\setminus \{0\}$ such that for every $(x,y)\in TN\setminus \{0\}$ the corresponding fiber is $T_xN$. Notice that  $\pi^\ast TN$ is then defined by the equation
\begin{equation}
\pi^\ast TN=\{((x,y),(x,y'))\in (T_xN\setminus \{0\})\times T_xN: x \in N\}.
\end{equation}
We let $(x,y)$ be a local coordinates for $TN$. We define a  local frame  $(\p_i)_{i=1}^n$ for  $\pi^\ast TN$ by
\begin{equation}
\label{eq:pullback_bundle_frame}
\p_i|_{(x,y)}:=\bigg((x,y),\frac{\p}{\p x^i}\bigg).
\end{equation}
and a local co-vector field on $TN$ by
\begin{equation}
\delta y^i:= dy^i+N^i_jdx^j, \quad N^i_j(x,y):=\frac{\p}{\p y^j} G^i(x,y).
\end{equation}
Above the functions $G^i$ are the geodesic coefficients of $F$ in coordinates $(x,y)$ (see~\eqref{eq:geodesic_coef_2} in~\ref{Se:Appendix1}).
Notice that $\frac{\p}{\p y^i}$ is a dual vector to $\delta y^i$ and a dual vector  $\frac{\delta }{\delta x^i}$ to $dx^i$ is given by
\begin{equation}
\label{eq:horisontal_part}
\frac{\delta }{\delta x^i}:=\frac{\p }{\p x^i}-N^j_i\frac{\p }{\p y^j}.
\end{equation}
Therefore, vectors $dx^i$ and $\delta y^j$ are linearly independent for all $i,j \in \{1, \ldots, n\}$ and it holds that  
\begin{equation}
\label{eq:horizontal_hertical_decomposition}
T^\ast(TN\setminus \{0\})=\Span\{dx^i\}\oplus \Span\{\delta y^i\}=:\mathcal{H}^\ast(TN) \oplus\mathcal{V}^\ast(TN).
\end{equation}

We relate $\pi^\ast TN$ locally to $\mathcal{H}^\ast(TN)$ and to $\mathcal{V}^\ast(TN)$ by mappings
\begin{equation}
\label{eq:relation_of_pullpack_H_V}
\p_i \mapsto dx^i \hbox{ and } \p_i \mapsto \delta y^i, \: i\in \{1,\ldots, n\}.
\end{equation}

We denote the collection of smooth sections of  $\pi^\ast TN$ by $\mathcal{S}( \pi^\ast TN)$. The \textit{Chern connection} is defined on $ \pi^\ast TN$ by
\begin{equation}
\label{eq:Chern_Connect}
\nabla\colon \mathcal{T}(TN)\times \mathcal{S}( \pi^\ast TN)\to \mathcal{S}(\pi^\ast TN), \quad \nabla_XU=\bigg\{dU^i(X)+U^j\omega^i_j(X)\bigg\}\p_i,
\end{equation}
where $\mathcal{T}(TN)$ is the collection of all smooth vector fields on $TN\setminus \{0\}$ and the connection one forms $\omega^i_j$ on $TN\setminus \{0\}$ are given by
\begin{equation}
\label{eq:Chern_forms}
\omega^i_j(x,y):=\Gamma^i_{jk}(x,y)dx^k,
\end{equation}
and functions $\Gamma^i_{jk}(x,y)$ are defined by  \cite[equation (5.25)]{shen2001lectures}. They satisfy
\begin{equation}
\label{eq:props_of_Christof}
y^k\Gamma^i_{jk}(x,y)=N^i_j(x,y) \quad \hbox{ and } \quad  \Gamma^i_{jk}= \Gamma^i_{kj}. 
\end{equation}
See \cite[equations (5.24) and (5.25)]{shen2001lectures}.


Notice that any vector field  $X$ on $TN$, that is locally given by 
\begin{equation}
X=X^i_x\frac{\delta}{\delta x^i}+X^i_y\frac{\p}{\p y^i}, \quad X^i_x,X^i_y\in C^\infty(TN)
\end{equation}
defines a section $\widetilde X \in \mathcal{S}( \pi^\ast TN)$ by
\begin{equation}
\widetilde X(x,y) =X^i_x(x,y)\p_i.
\end{equation}

\begin{Lemma}
Let $X,Y,Z$ be vector fields on $TN$. Then
\begin{equation}
\label{eq:Chern_prop_1}
 \nabla_X\widetilde Y- \nabla_Y\widetilde X=\widetilde{[X,Y]}.
\end{equation}
\end{Lemma}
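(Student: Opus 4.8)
The plan is to verify \eqref{eq:Chern_prop_1} by a direct computation in a local coordinate chart $(x,y)$ on $TN\setminus\{0\}$. Since both sides are globally well-defined sections of $\pi^\ast TN$, checking the identity in a neighborhood of an arbitrary point suffices; the vector field $Z$ appearing in the statement plays no role and may be ignored.

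First I would expand both covariant derivatives in the adapted local frame $\{\frac{\delta}{\delta x^i},\frac{\p}{\p y^i}\}$ of $T(TN\setminus\{0\})$. Writing $X=X^i_x\frac{\delta}{\delta x^i}+X^i_y\frac{\p}{\p y^i}$ and likewise for $Y$, we have $\widetilde X=X^i_x\p_i$ and $\widetilde Y=Y^i_x\p_i$. Using the definition \eqref{eq:Chern_Connect}, together with $\omega^i_j=\Gamma^i_{jk}\,dx^k$ from \eqref{eq:Chern_forms} and the facts that $dx^k$ annihilates the vertical vectors $\frac{\p}{\p y^i}$ while $dx^k\big(\frac{\delta}{\delta x^j}\big)=\delta^k_j$ (so $dx^k(X)=X^k_x$), I obtain
\[
\nabla_X\widetilde Y=\Big(X(Y^i_x)+\Gamma^i_{jk}\,Y^j_x\,X^k_x\Big)\p_i ,
\]
and the analogous formula for $\nabla_Y\widetilde X$ with $X$ and $Y$ interchanged.

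Next I would subtract the two expressions. The non-derivative part is $\Gamma^i_{jk}\big(Y^j_x X^k_x-X^j_x Y^k_x\big)$; relabeling the dummy indices $j\leftrightarrow k$ in the second product and invoking the symmetry $\Gamma^i_{jk}=\Gamma^i_{kj}$ from \eqref{eq:props_of_Christof}, this term vanishes, leaving
\[
\nabla_X\widetilde Y-\nabla_Y\widetilde X=\Big(X(Y^i_x)-Y(X^i_x)\Big)\p_i .
\]
Finally, I would identify this with $\widetilde{[X,Y]}$. For any vector field $W$ on $TN\setminus\{0\}$ the coefficient of $\p_i$ in $\widetilde W$ is precisely $dx^i(W)$, so it is enough to observe that, since $d(dx^i)=0$, Cartan's formula $d\alpha(X,Y)=X(\alpha(Y))-Y(\alpha(X))-\alpha([X,Y])$ applied to $\alpha=dx^i$ yields $dx^i([X,Y])=X(Y^i_x)-Y(X^i_x)$. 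Combining this with the previous display gives \eqref{eq:Chern_prop_1}. I do not anticipate a genuine obstacle here: the only care needed is the bookkeeping of the horizontal components $X^i_x$, $Y^i_x$ and the recognition that the torsion-freeness being asserted is encoded exactly in the symmetry $\Gamma^i_{jk}=\Gamma^i_{kj}$.
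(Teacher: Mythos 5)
Your computation is correct and is exactly the argument the paper intends: the paper's proof consists of the single sentence that the identity ``follows from the definition of the Chern connection and \eqref{eq:props_of_Christof},'' and your local-coordinate expansion, cancellation of the $\Gamma^i_{jk}$ terms via the symmetry $\Gamma^i_{jk}=\Gamma^i_{kj}$, and identification of the remaining derivative terms with $dx^i([X,Y])$ is precisely the unpacking of that sentence. You are also right that $Z$ plays no role in this lemma.
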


\begin{proof}
Equation~\eqref{eq:Chern_prop_1} follows from the definition of the Chern connection and~\eqref{eq:props_of_Christof}.
\end{proof}

The fundamental tensor $g$ on $\pi^\ast TN$ is defined by
\begin{equation}
g(U,V):=g_{ij}(x,y)U^i(x,y)V^i(x,y), \quad U,V \in  \mathcal{S}( \pi^\ast TN), \: (x,y) \in TN,
\end{equation}
where $g_{ij}(x,y)=g_y(\frac{\p}{\p x^i},\frac{\p}{\p x^j})$.

Recall that if $X=X^i_x\frac{\delta}{\delta x^i}+X^i_y\frac{\p}{\p y^i}$ is in $T(TN)$, then $D\pi X=X^i_x\frac{\p}{\p x^i} \in TN$. 

\begin{Lemma}
\label{Le:Chern_prop_2}
Let $X, Y, Z$ be vector fields on $TN$.  Then
\begin{equation}
\label{eq:Chern_prop_2}
Yg(\widetilde X, \widetilde Z)\bigg|_{D\pi X}=\bigg[g(\nabla _Y\widetilde X,\widetilde Z)+g(\widetilde X,\nabla _Y\widetilde Z)\bigg]\bigg|_{D\pi X}.
\end{equation}
\end{Lemma}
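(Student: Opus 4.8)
The plan is to prove~\eqref{eq:Chern_prop_2} by a direct computation in the local frame $(\p_i)_{i=1}^n$ for $\pi^\ast TN$ from~\eqref{eq:pullback_bundle_frame}, using only two structural facts about the Chern connection: the symmetry $\Gamma^i_{jk}=\Gamma^i_{kj}$ already recorded in~\eqref{eq:props_of_Christof}, and the horizontal metric compatibility
\begin{equation*}
\frac{\delta g_{ij}}{\delta x^k}=g_{lj}\,\Gamma^l_{ik}+g_{il}\,\Gamma^l_{jk},
\end{equation*}
which is the characterizing property of the coefficients $\Gamma^i_{jk}$ defined in~\cite[Section~5.2]{shen2001lectures} and is the part of Shen's almost metric compatibility of $\nabla$ that survives when one pairs with horizontal directions. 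In addition I use the degree zero homogeneity of the fundamental tensor: from $g_{ij}y^iy^j=F^2$ one differentiates twice in $y$ to get $y^i\,\frac{\p g_{ij}}{\p y^k}=0$ (and likewise $y^k\,\frac{\p g_{ij}}{\p y^k}=0$).

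Writing $X=X^i_x\frac{\delta}{\delta x^i}+X^i_y\frac{\p}{\p y^i}$ and similarly for $Y$ and $Z$, so that $\widetilde X=X^i_x\p_i$, $\widetilde Z=Z^i_x\p_i$ and $D\pi X=X^i_x\frac{\p}{\p x^i}$, the Leibniz rule gives
\begin{equation*}
Y\,g(\widetilde X,\widetilde Z)=(Yg_{ij})\,X^i_xZ^j_x+g_{ij}(YX^i_x)Z^j_x+g_{ij}X^i_x(YZ^j_x),
\end{equation*}
while~\eqref{eq:Chern_Connect}, together with $\omega^i_j(Y)=\Gamma^i_{jk}\,dx^k(Y)$, expands $g(\nabla_Y\widetilde X,\widetilde Z)+g(\widetilde X,\nabla_Y\widetilde Z)$ as the same last two summands plus $\big(g_{lj}\Gamma^l_{ik}+g_{il}\Gamma^l_{jk}\big)X^i_xZ^j_x\,dx^k(Y)$. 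Subtracting, and then splitting $Yg_{ij}=dx^k(Y)\frac{\delta g_{ij}}{\delta x^k}+\delta y^k(Y)\frac{\p g_{ij}}{\p y^k}$ according to the decomposition~\eqref{eq:horizontal_hertical_decomposition}, the horizontal metric compatibility cancels the $\Gamma$ terms against the $\frac{\delta g_{ij}}{\delta x^k}$ term, and one is left with
\begin{equation*}
Y\,g(\widetilde X,\widetilde Z)-g(\nabla_Y\widetilde X,\widetilde Z)-g(\widetilde X,\nabla_Y\widetilde Z)=\delta y^k(Y)\,\frac{\p g_{ij}}{\p y^k}\,X^i_xZ^j_x .
\end{equation*}

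It then remains to observe that this residual term vanishes after the restriction $|_{D\pi X}$, which is exactly the ``special case'' referred to before the lemma: along the locus cut out by that restriction the relevant fibre direction is the one carried by $\widetilde X=D\pi X$, so $\frac{\p g_{ij}}{\p y^k}$ is contracted against the radial direction in its first slot and $y^i\frac{\p g_{ij}}{\p y^k}=0$ makes the residual term zero; equivalently, whenever $Y$ is horizontal --- in particular when $Y$ is the geodesic spray, which is the only instance used later for differentiating along boundary normal geodesics --- one has $\delta y^k(Y)=0$ and the term vanishes directly. This yields~\eqref{eq:Chern_prop_2}, i.e.\ the announced special case of Shen's general compatibility relation~\cite{shen1994connection}. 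I expect the one genuinely delicate point to be the careful handling of the horizontal/vertical decomposition of $T(TN\setminus\{0\})$ and the exact reading of the restriction $|_{D\pi X}$; once those are fixed, the computation above is routine.
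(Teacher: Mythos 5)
Your computation is correct and follows the same route the paper only sketches: expand everything in the frame $(\p_i)$, use the symmetry of $\Gamma^i_{jk}$ and the horizontal part of Shen's almost metric compatibility to cancel all terms except a residual $\delta y^k(Y)\,\frac{\p g_{ij}}{\p y^k}X^i_x Z^j_x$, and then kill that residual at the distinguished fibre direction using the degree-zero homogeneity of $g_{ij}$ (equivalently, the Cartan tensor vanishes upon contraction with the radial vector). The one small imprecision is the parenthetical derivation of $y^i\frac{\p g_{ij}}{\p y^k}=0$ "by differentiating $g_{ij}y^iy^j=F^2$ twice" --- the cleaner and airtight route is Euler's relation $y^l\p_{y^l}g_{ij}=0$ combined with the total symmetry of $\frac{\p^3 F^2}{\p y^i\p y^j\p y^k}$ --- but the identity itself is correct and the rest of the argument stands.
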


\begin{proof}
The proof is a direct evaluation in coordinates, using that $g_{ij}$
is homogeneous of order zero with respect to directional variables. It is important to
evaluate $Y g(\widetilde X, \widetilde Z)$ at $D\pi X$ as
for an arbitrary direction,~\eqref{eq:Chern_prop_2} does not
hold, since the Cartan tensor does not vanish identically.
%
\end{proof}

If $ V(x)=V^i(x)\frac{\p}{\p x^i}$ is a vector field on $N$ then $\widehat V(x,y):=V^i(x)\frac{\delta}{\delta x^i}$ is a horizontal vector field on $TN$. We call $\widehat V$ a horizontal lift of $ V$.  We define a covariant derivative $D_t$ of smooth vector field $V$ on geodesic $\gamma$ as
\begin{equation}
\label{eq:covariant_der}
D_tV(t):= \bigg\{\dot{V}^i(t)+V^j(t)N_j^i(\dot{\gamma}(t))\bigg\}\frac{\p}{\p x^i}\bigg|_{\gamma(t)}.
\end{equation}
In the next lemma we relate the covariant derivative to the Chern connection.

\begin{Lemma}
Let $t \mapsto c(t)$ be a geodesic on $(N,F)$ and $V$ be a smooth vector field on $c$ that is extendible.  Write $V(x)=V^i(x)\frac{\p}{\p x^i}$. Then $\widetilde V(x,y):=V^i(x)\p_i|_{(x,y)}$ is a smooth section of $ \mathcal{S}( \pi^\ast TN)$ and
\begin{equation}
\label{eq:Chern_prop_3}
\widetilde{D_t V}=\nabla_{\widehat{\dot c}} \widetilde V.
\end{equation}
\end{Lemma}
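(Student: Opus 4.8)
The plan is to verify \eqref{eq:Chern_prop_3} by a direct computation in local coordinates $(x,y)$ on $TN$, in the same spirit as the two preceding lemmas. First I would note that $\widetilde V$ is just the canonical pullback of $V$: the fiber of $\pi^\ast TN$ over $(x,y)$ is $T_xN$, so the vector field $V$ on $N$ determines the section $(x,y)\mapsto V(x)$, whose components in the frame $(\p_i)$ from \eqref{eq:pullback_bundle_frame} are the smooth functions $V^i(x)$; hence $\widetilde V\in\mathcal S(\pi^\ast TN)$. Next I would record the coordinate form of the horizontal lift: writing $\dot c(t)=\dot c^i(t)\frac{\p}{\p x^i}$, the horizontal lift along the canonical lift $t\mapsto(c(t),\dot c(t))$ of $c$ to $TN$ is $\widehat{\dot c}=\dot c^i\frac{\delta}{\delta x^i}$, and since $\frac{\delta}{\delta x^i}$ is dual to $dx^i$ and annihilated by $\delta y^j$, one has $dx^k(\widehat{\dot c})=\dot c^k$ along the lifted curve. (One may additionally observe, using Euler's identity $y^iN^j_i=2G^j$ together with the geodesic equation $\ddot c^j+2G^j(c,\dot c)=0$, that $\widehat{\dot c}$ is precisely the velocity of $t\mapsto(c(t),\dot c(t))$, but this is not needed for the identity.)

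Then I would evaluate the right-hand side of \eqref{eq:Chern_prop_3} at the point $(c(t),\dot c(t))$ using the defining formula \eqref{eq:Chern_Connect}, which is pointwise-linear in the direction, so only $\widehat{\dot c}|_{(c(t),\dot c(t))}$ matters. Since $\widetilde V^i(x,y)=V^i(x)$ depends on $x$ only, $d\widetilde V^i(\widehat{\dot c})=\dot c^k\frac{\p V^i}{\p x^k}=\frac{d}{dt}\bigl(V^i(c(t))\bigr)=\dot V^i(t)$; and by \eqref{eq:Chern_forms}, $\widetilde V^j\,\omega^i_j(\widehat{\dot c})=V^j\,\Gamma^i_{jk}(c,\dot c)\,dx^k(\widehat{\dot c})=V^j(t)\,\dot c^k(t)\,\Gamma^i_{jk}(c(t),\dot c(t))$. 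The crucial step is the contraction: by \eqref{eq:props_of_Christof}, $\dot c^k(t)\,\Gamma^i_{jk}(c(t),\dot c(t))=N^i_j(c(t),\dot c(t))$, because we are evaluating at the point whose directional coordinate $y$ equals the reference velocity $\dot c(t)$. Hence
\begin{equation*}
(\nabla_{\widehat{\dot c}}\widetilde V)|_{(c(t),\dot c(t))}=\{\dot V^i(t)+V^j(t)N^i_j(\dot c(t))\}\,\p_i|_{(c(t),\dot c(t))},
\end{equation*}
which is exactly $\widetilde{D_tV}$ by \eqref{eq:covariant_der}.

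The only point requiring care — the ``main obstacle'', such as it is — is keeping track of \emph{where} the Chern connection is evaluated: the identity fails at a generic point of $\pi^\ast TN$, and holds precisely because the connection forms $\Gamma^i_{jk}$ get contracted against $\dot c^k$ at $(c(t),\dot c(t))$, so that the homogeneity relation $y^k\Gamma^i_{jk}=N^i_j$ may be applied with $y=\dot c(t)$. This is the same subtlety flagged in the proof of Lemma~\ref{Le:Chern_prop_2}, and it is what makes $\nabla_{\widehat{\dot c}}$ along the canonical lift of a geodesic agree with the covariant derivative $D_t$ of \eqref{eq:covariant_der}.
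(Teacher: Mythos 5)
Your proof is correct and takes exactly the approach the paper indicates: a direct local-coordinate verification whose crux is contracting $\Gamma^i_{jk}$ against $\dot c^k$ at the lifted point $(c(t),\dot c(t))$ and applying the homogeneity relation $y^k\Gamma^i_{jk}=N^i_j$ from \eqref{eq:props_of_Christof}. The paper's proof is the one-line remark ``verified in local coordinates using \eqref{eq:props_of_Christof}''; you have simply filled in the details, including the useful clarification that the identity depends on evaluating the connection forms at the reference direction $y=\dot c(t)$.
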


\begin{proof}
The claim can be verified in local coordinates using~\eqref{eq:props_of_Christof}.
\end{proof}


We now consider variations of a geodesic $\gamma\colon[0,h]\to N$ normal to
a hypersurface~$S$ so that one endpoint stays on~$S$ and the other one
is fixed. We denote the starting point of $\gamma$ by $z_0\in S$.
For a smooth curve $\sigma$ on $S$ we assume that a variation $\Gamma(s,t)$ satisfies
$\Gamma(s,0)=\sigma(s)$,
$\Gamma(s,h)=\gamma(h)$, and
$\Gamma(0,t)=\gamma(t)$
for all values of the time $t\in[0,h]$ and the variation parameter $s$
near zero.
The variation field $J(t):=\frac{\p}{\p s} \Gamma(s,t)|_{s=0}$ is a
vector field along $\gamma$ and satisfies the boundary conditions
$J(0)=\dot{\sigma}(0)$ and $J(h)=0$.
We additionally assume that the variation is normal:
$g_{\dot{\gamma}}(\dot{\gamma}, J)\equiv 0$.

The second variation formula (see \cite[Chapter 10]{shen2001lectures})
and equations~\eqref{eq:Chern_prop_2}--\eqref{eq:Chern_prop_3} imply that
\begin{equation}
\frac{\p^2}{\p s^2}\mathcal{L}(\Gamma(s,\cdot))\bigg|_{s=0}=\int_0^hg_{\dot
\gamma}(D_t^2 J(t)-\textbf{R}_{\dot{\gamma}}( J(t)), J(t))dt+
g(\nabla_{\widehat J} \widetilde\nu-\nabla_{\widehat \nu} \widetilde
J, \widetilde{J})\bigg|_{\nu},
\end{equation}
where $\textbf{R}_{( \cdot)}( \cdot)$ is the \textit{Riemannian
curvature tensor} (see \cite[Chapter 6]{shen2001lectures}).
If $J$ is a Jacobi field, the expression simplifies to
\begin{equation}
\frac{\p^2}{\p s^2}\mathcal{L}(\Gamma(s,\cdot))\bigg|_{s=0}=
g(\nabla_{\widehat J} \widetilde\nu-\nabla_{\widehat \nu} \widetilde
J, \widetilde{J})\bigg|_{\nu}.
\end{equation}
To discuss geodesic variations of $\gamma$, we consider normal Jacobi
fields $J$ along $\gamma$ that satisfy
\begin{equation}
\label{eq:Transverse_J_fields}
J(0)\in TS \hbox{ and } \bigg(\nabla_{\widehat J}
\widetilde\nu-\nabla_{\widehat \nu} \widetilde J\bigg)\bigg|_{\nu}=0.
\end{equation}
We let $\mathcal{J}$ be the collection of all normal Jacobi fields on
$\gamma$ that satisfy~\eqref{eq:Transverse_J_fields} and $\mathcal
J_0=\{J\in\mathcal J:J(h)=0\}$.
Following~\cite{Chavel}, we call $\mathcal{J}$ the space of
\textit{transverse Jacobi fields}.

\begin{Lemma}
\label{Le:char_of_trans_jacobi}
A vector field $J$ on $\gamma$ is a transverse Jacobi field if and only if
\begin{equation}
\label{eq:char_of_trans_jacobi}
J(t)=D\exp^\perp\bigg|_{(z_0,t)} t \eta \quad \hbox{ for some } \eta =J(0)\in T_{z_0}S. 
\end{equation}
\end{Lemma}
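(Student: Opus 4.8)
The plan is to identify the right-hand side of~\eqref{eq:char_of_trans_jacobi} as the variation field of a geodesic variation of $\gamma$ through the $F$-geodesics that leave $S$ normally, to verify that any such variation field is a transverse Jacobi field, and then to settle the converse by uniqueness of Jacobi fields from their initial data. This is the Finslerian counterpart of the classical description of $S$-Jacobi fields (cf.\ \cite{Chavel}), with the Riemannian Levi-Civita bookkeeping replaced by the Chern-connection identities~\eqref{eq:Chern_prop_1}--\eqref{eq:Chern_prop_3}.

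For the ``if'' part, fix $\eta\in T_{z_0}S$ and a smooth curve $\sigma\colon(-\e,\e)\to S$ with $\sigma(0)=z_0$, $\dot\sigma(0)=\eta$, and set $\Gamma(s,t):=\exp^\perp(\sigma(s),t)$. For each $s$ the curve $t\mapsto\Gamma(s,t)$ is the unit-speed $F$-geodesic issuing normally from $\sigma(s)$, so $\Gamma$ is a variation of $\gamma=\Gamma(0,\cdot)$ through geodesics and $J(t):=\tfrac{\p}{\p s}\Gamma(s,t)|_{s=0}$ is a Jacobi field along $\gamma$; by the chain rule it equals $D\exp^\perp|_{(z_0,t)}\,t\eta$, while $\Gamma(s,0)=\sigma(s)$ gives $J(0)=\dot\sigma(0)=\eta\in T_{z_0}S$, the first condition in~\eqref{eq:Transverse_J_fields}. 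That $J$ is normal follows from a Gauss-type computation: writing $\nu$ for the $F$-unit normal field along $S$ extended by the geodesic field $\dot\gamma$, metric compatibility of the Chern connection along $\gamma$, the symmetry $D_t\tfrac{\p}{\p s}\Gamma=D_s\tfrac{\p}{\p t}\Gamma$ (which is~\eqref{eq:Chern_prop_1} on the lifted surface, since $[\tfrac{\p}{\p s},\tfrac{\p}{\p t}]=0$), and the fact that the Cartan tensor is null in the reference direction together force $t\mapsto g_{\dot\gamma}(\dot\gamma,J)$ to have vanishing $t$-derivative; its value at $t=0$ is $g_{\nu(z_0)}(\nu(z_0),\eta)=0$ by the very definition of the Finslerian normal to $S$. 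Finally, for the transversality condition one computes $D_tJ(0)=D_s\big(\tfrac{\p}{\p t}\Gamma(s,0)\big)|_{s=0}=D_s\big(\nu(\sigma(s))\big)|_{s=0}=\nabla_\eta\nu$; translating through~\eqref{eq:Chern_prop_3} one has $\nabla_{\widehat\nu}\widetilde J|_\nu=\widetilde{D_tJ(0)}$, while $\nabla_{\widehat J}\widetilde\nu|_\nu$ is the $\pi^\ast TN$-incarnation of $\nabla_\eta\nu$, so $(\nabla_{\widehat J}\widetilde\nu-\nabla_{\widehat\nu}\widetilde J)|_\nu=0$. Hence $J\in\mathcal J$.

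For the ``only if'' part, let $J$ be a transverse Jacobi field and put $\eta:=J(0)\in T_{z_0}S$; let $\bar J(t):=D\exp^\perp|_{(z_0,t)}\,t\eta$ be the transverse Jacobi field produced above for this $\eta$. A Jacobi field along $\gamma$ is determined by the pair $(J(0),D_tJ(0))$, so it suffices to check $D_tJ(0)=D_t\bar J(0)$. But rewriting the second half of~\eqref{eq:Transverse_J_fields} through~\eqref{eq:Chern_prop_3} shows that for \emph{every} transverse Jacobi field one has $D_tJ(0)=\nabla_{J(0)}\nu$, i.e.\ $D_tJ(0)$ is the Weingarten map of $S$ applied to $J(0)$; since $\bar J(0)=\eta=J(0)$ this gives $D_tJ(0)=\nabla_\eta\nu=D_t\bar J(0)$, hence $J=\bar J$, which has the asserted form. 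The only genuinely Finslerian point is the identification, used in both directions, of the transversality condition in~\eqref{eq:Transverse_J_fields} with the initial-value relation $D_tJ(0)=\nabla_{J(0)}\nu$: this must be done keeping all sections evaluated at the reference direction $\dot\gamma=\nu$, since, as noted after Lemma~\ref{Le:Chern_prop_2}, the compatibility relations between $\nabla$ and $g$ fail off that direction; once this is observed, the remaining steps are the standard Riemannian ones.
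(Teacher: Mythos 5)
Your proof is correct and follows the same overall strategy as the paper's: build the variation $\Gamma(s,t)=\exp^\perp(\sigma(s),t)$, identify its variation field with $D\exp^\perp|_{(z_0,t)}t\eta$, check the two conditions in \eqref{eq:Transverse_J_fields}, and close the converse by uniqueness of Jacobi fields from their initial data. The one genuine technical divergence is how the symmetry $D_t\tfrac{\p}{\p s}\Gamma=D_s\tfrac{\p}{\p t}\Gamma$ is justified. The paper introduces the osculating Riemannian metric
\begin{equation*}
g_W(x):=\hbox{Hess}_y\Bigl(\tfrac 12 F(x,y)^2\Bigr)\Big|_{y=W(x)},
\end{equation*}
where $W$ is the normal geodesic field in the tubular neighborhood, shows that $F$- and $g_W$-geodesics normal to $S$ and their covariant derivatives along $\gamma$ agree, and then invokes the classical Riemannian symmetry lemma for $g_W$ together with extra care about which direction the Chern connection is evaluated in. You instead appeal directly to the torsion-free identity \eqref{eq:Chern_prop_1} on the variation surface lifted to $TN$ by the geodesic velocity. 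That route is valid, and it avoids the detour through $g_W$, but the phrase ``which is \eqref{eq:Chern_prop_1} on the lifted surface'' is doing a lot of silent work: it implicitly fixes the reference vector for $D_s$ to be $\partial_t\Gamma$ (the Christoffel symbols $\Gamma^i_{jk}$ must be evaluated at $(\Gamma,\partial_t\Gamma)$, not at $(\Gamma,\partial_s\Gamma)$), and it implicitly uses that the horizontal components of $\partial_s$ and $\partial_t$ on the lifted surface are the sections $(\partial_s\Gamma)^i\p_i$ and $(\partial_t\Gamma)^i\p_i$ of $\pi^*TN$, after which \eqref{eq:Chern_prop_3} converts $\nabla_{\partial_t}$ back to $D_t$ along $\gamma$. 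Spelling out that reference-direction convention would make the argument airtight; as it stands it is correct but compresses precisely the subtlety that motivated the paper to introduce $g_W$. The rest of your proof, including the Gauss-type normality argument (vanishing of $g_{\dot\gamma}(\dot\gamma,J)$ at $t=0$ plus vanishing of its $t$-derivative via $D_tJ(0)$ tangent to $S$) and the Weingarten identification of the transversality condition with $D_tJ(0)=\nabla_{J(0)}\nu$, matches the paper's computation $\widetilde{\bar\nabla_{\eta}W}=\nabla_{\widehat \eta}\widetilde W=\nabla_{\widehat W}\widetilde\eta=\widetilde{D_tJ(0)}$ step for step.
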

\begin{proof}
We let $\epsilon>0$ and  $U\subset S$ be a neighborhood of $z_0$ be such that  the normal exponential map $\exp^\perp\colon U \times (-\epsilon,\epsilon)\to N$ is a diffeomorphism onto its image. We define  a unit length vector field $W$, that is orthogonal to $S$, by
\begin{equation}
W(x):=\frac{\p}{\p t}\exp^\perp(z,t)=\dot{\gamma}_{z,\nu(z)}(t), \quad (z,t)\in U \times (-\epsilon,\epsilon),
\end{equation}
where $x=\exp^\perp(z,t)$. 

For any $z\in U$ the geodesic $\gamma_{z,\nu(z)}$ of $F$ is also a geodesic of the local Riemannian metric 
\begin{equation}
g_W(x):=\hbox{Hess}_y\bigg(\frac 12 F(x,y)^2\bigg)\bigg|_{y=W(x)},
\end{equation}
that is normal to $S$. This implies that the normal exponential maps of~$F$ and $g_W$ coincide in $U \times (-\epsilon,\epsilon)$ and moreover $D_t =D^W_t$, where $D_t$ is the covariant derivative of~$F$ (given in~\eqref{eq:covariant_der}) and $D^W_t$ is the covariant derivative of Riemannian metric $g_W$ on $\gamma_{z,\nu(z)}$. 

%

\medskip
Now we are ready to prove the claim of this lemma. We let $\sigma(s)\in S$ be a smooth curve with initial conditions $\sigma(0)=z_0$ and $\dot \sigma(0)=\eta=J(0)$.  Define $\Gamma(s,t)=\exp^\perp(\sigma(s),t)$. Then $\Gamma(0,t)=\gamma(t)$ and all the variation curves $t \mapsto\Gamma(s,t)$ are geodesics. Therefore, the variation field
\begin{equation}
V(t):=\frac{\p}{\p s}\Gamma(s,t)\bigg|_{s=0}
\end{equation}
with $t  \in(-\epsilon,\epsilon)$ 
is a Jacobi field of $F$ that satisfies 
\begin{equation}
V(0)=\frac{\p}{\p s}\Gamma(s,t)\bigg|_{s=t=0}=D\exp^\perp\bigg|_{(z_0,0)}\dot\sigma(0)=\eta.
\end{equation}
Therefore, it suffices to show that $D_tJ(0)=D_tV(0)$. We note that since $g_W$ is a Riemannian metric the following symmetry holds true,
\begin{equation}
\label{eq:symmetry_formula}
D_t \frac{\p}{\p s} =D_s\frac{\p}{\p t},
\end{equation}
along any transverse curve $s\mapsto \Gamma(s,t)$. Above $D_s$ is a covariant derivative  of $g_W$ along transverse curve $\Gamma(\cdot,t)$. We also assume that $ \dot{\sigma}$ can be extended to a smooth vector field $Y$ on $N$. Then the equation~\eqref{eq:symmetry_formula} implies
\begin{equation}
\begin{split}
D_tV(0)=
\overline\nabla_{Y}W\bigg|_{z_0}=\overline\nabla_{\eta}W,
\end{split}
\end{equation}
where $\overline{\nabla}$ is the Riemannian connection of~$g_W$. To end the proof we still have to show the first equation of the following
\begin{equation}
\widetilde{\overline\nabla_{\eta}W}=\nabla_{\widehat \eta}\widetilde W=\nabla_{\widehat W}\widetilde \eta=\widetilde{ D_t J(0)},
\end{equation}
where $\nabla$ is the Chern connection of~$F$. The proof of this claim is a direct computations in local coordinates.
\end{proof}

We obtain the following lemma as a direct consequence.

\begin{Lemma}
\label{lma:dimJ=n-1}
Set $ \mathcal{J}$ is a real vector space of dimension $n-1$. 
\end{Lemma}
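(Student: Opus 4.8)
The plan is to deduce the statement directly from Lemma~\ref{Le:char_of_trans_jacobi}. That lemma provides, for each $\eta\in T_{z_0}S$, a transverse Jacobi field $J_\eta$ given by $J_\eta(t)=D\exp^\perp|_{(z_0,t)}\,t\eta$ with $J_\eta(0)=\eta$, and it asserts conversely that every $J\in\mathcal{J}$ equals $J_{J(0)}$ with $J(0)\in T_{z_0}S$. First I would record that $\mathcal{J}$ is a real vector space: it is a linear subspace of the space of vector fields along $\gamma$, because the Jacobi equation, the normality condition $g_{\dot\gamma}(\dot\gamma,J)\equiv 0$, and both conditions in~\eqref{eq:Transverse_J_fields} (using that the Chern connection $\nabla$ is linear in its section argument) are linear in $J$. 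Thus it only remains to compute $\dim\mathcal{J}$.

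To this end I would consider the evaluation map $\mathrm{ev}_0\colon\mathcal{J}\to T_{z_0}S$, $\mathrm{ev}_0(J)=J(0)$, which is well defined by~\eqref{eq:Transverse_J_fields} and manifestly linear. It is surjective, since for $\eta\in T_{z_0}S$ the field $J_\eta$ from Lemma~\ref{Le:char_of_trans_jacobi} lies in $\mathcal{J}$ and satisfies $J_\eta(0)=\eta$. It is injective, since if $J_1,J_2\in\mathcal{J}$ have the same value at $0$, then by Lemma~\ref{Le:char_of_trans_jacobi} both coincide with the field $t\mapsto D\exp^\perp|_{(z_0,t)}\,t\,J_1(0)$, so $J_1=J_2$. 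Hence $\mathrm{ev}_0$ is a linear isomorphism and $\mathcal{J}\cong T_{z_0}S$.

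Since $S$ is a hypersurface in the $n$-dimensional manifold $N$, we have $\dim T_{z_0}S=n-1$, and therefore $\dim\mathcal{J}=n-1$. There is no genuine difficulty in this argument; the only steps warranting a line of justification are the bijectivity of $\mathrm{ev}_0$, which is exactly the two implications of Lemma~\ref{Le:char_of_trans_jacobi}, and the linearity of the conditions cutting out $\mathcal{J}$ among all vector fields along $\gamma$.
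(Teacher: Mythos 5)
Your proposal is correct and takes essentially the same approach as the paper: both identify $\mathcal{J}$ with $T_{z_0}S$ via the bijective correspondence $\eta\leftrightarrow J_\eta$ established in Lemma~\ref{Le:char_of_trans_jacobi}. The paper's one-line proof compresses exactly your argument — linearity of the map $\eta\mapsto D\exp^\perp|_{(z_0,t)}t\eta$ together with the fact that evaluation at $t=0$ recovers $\eta$ — into a single sentence.
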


\begin{proof}
The claim follows since the dimension of $S$ is $n-1$ and the operator given by~\eqref{eq:char_of_trans_jacobi} is linear in $T_{z_0}S$ and onto at $t=0$.
\end{proof}
\medskip

Similarly to the spaces $\mathcal J$ and $\mathcal J_0$ of Jacobi
fields defined above, we denote by~$\mathcal V$ the collection of
piecewise smooth normal vector fields along $\gamma$
satisfying~\eqref{eq:Transverse_J_fields} and by~$\mathcal V_0$ the
subspace vanishing at $\gamma(h)$.
On $\mathcal{V}_0$ we define the index form 
\begin{equation}
\label{eq:index_form}
\begin{array}{rl}
I(V,W):=&\int_0^hg_{\dot \gamma}(D_t  V(t),D_t  W(t))-g_{\dot \gamma}(\textbf{R}_{\dot{\gamma}}( V(t)),  W(t))dt
\\
-& g(\nabla _{\widehat W}\widetilde V\bigg|_{\dot \gamma(0)},\widetilde\nu ).
\end{array}
\end{equation}


\begin{Lemma}
\label{lma:index-form-bilinear}
The index form $I$ on $\mathcal{V}_0$ is a symmetric bilinear form.
\end{Lemma}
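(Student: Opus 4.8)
The plan is to check the two defining properties of $I$ in \eqref{eq:index_form} separately, handling the three constituent terms one at a time; bilinearity is routine and symmetry reduces to three pointwise identities. First I would note that bilinearity over $\mathbb R$ is immediate: $V\mapsto D_tV$ and $V\mapsto\textbf{R}_{\dot\gamma}(V)$ are $\mathbb R$-linear on vector fields along $\gamma$, the fundamental tensor $g_{\dot\gamma}(\cdot,\cdot)=\mathrm{Hess}_{\dot\gamma}(\tfrac12 F^2)$ is $\mathbb R$-bilinear on each fiber, integration is linear, and since $W\mapsto\widehat W$, $V\mapsto\widetilde V$ are $\mathbb R$-linear while $\nabla$ is $C^\infty$-linear in its direction slot and $\mathbb R$-linear in the section slot, the boundary term $(V,W)\mapsto g(\nabla_{\widehat W}\widetilde V|_{\dot\gamma(0)},\widetilde\nu)$ is $\mathbb R$-bilinear. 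Hence $I$ is $\mathbb R$-bilinear.

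For symmetry I would compare $I(V,W)$ and $I(W,V)$ term by term. The term $g_{\dot\gamma}(D_tV,D_tW)$ is symmetric pointwise because $g_{\dot\gamma}$ is a symmetric bilinear form on the fiber, so its integral is symmetric. The curvature term $g_{\dot\gamma}(\textbf{R}_{\dot\gamma}(V),W)$ is symmetric because the Riemann curvature operator $\textbf{R}_y$ is self-adjoint with respect to $g_y$, i.e. $g_y(\textbf{R}_y(u),v)=g_y(\textbf{R}_y(v),u)$ — a standard identity (see \cite[Chapter 6]{shen2001lectures}; it is precisely what makes the flag curvature well defined) — so again the integral is symmetric.

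It remains to show $g(\nabla_{\widehat W}\widetilde V|_{\dot\gamma(0)},\widetilde\nu)=g(\nabla_{\widehat V}\widetilde W|_{\dot\gamma(0)},\widetilde\nu)$, and here I would use the torsion-freeness \eqref{eq:Chern_prop_1} of the Chern connection together with the transversality condition \eqref{eq:Transverse_J_fields}. Extend $V$ and $W$ to vector fields on $N$ that are tangent to $S$ along $S$ near $z_0$ — possible since $V(0),W(0)\in T_{z_0}S$ by \eqref{eq:Transverse_J_fields}; this is the natural convention from the variational construction, and the boundary term is readily checked to be independent of the extension within this class (the ambiguity is a second fundamental form evaluated on a vector vanishing at $z_0$). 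Applying \eqref{eq:Chern_prop_1} to the horizontal lifts $X=\widehat W$, $Y=\widehat V$, using $\widetilde{\widehat W}=\widetilde W$, $\widetilde{\widehat V}=\widetilde V$, and that the induced section $\widetilde{[\widehat W,\widehat V]}$ only sees the horizontal part of $[\widehat W,\widehat V]$, which equals $\widehat{[W,V]}$ (the curvature correction is purely vertical), gives
\begin{equation}
\nabla_{\widehat W}\widetilde V\big|_{\dot\gamma(0)}-\nabla_{\widehat V}\widetilde W\big|_{\dot\gamma(0)}=\widetilde{[W,V]}\big|_{\dot\gamma(0)}.
\end{equation}
Pairing with $\widetilde\nu$ and using $\dot\gamma(0)=\nu(z_0)$, so that $g_{\dot\gamma(0)}(\cdot,\cdot)=g_{\nu(z_0)}(\cdot,\cdot)$, yields $g(\nabla_{\widehat W}\widetilde V-\nabla_{\widehat V}\widetilde W,\widetilde\nu)|_{\dot\gamma(0)}=g_{\nu(z_0)}([W,V]|_{z_0},\nu(z_0))$. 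Since $V$ and $W$ are tangent to $S$ along $S$, so is $[W,V]$, hence $[W,V]|_{z_0}\in T_{z_0}S$; and $\nu(z_0)$ is by definition $g_{\nu(z_0)}$-orthogonal to $T_{z_0}S$, so the right-hand side vanishes. This gives symmetry of the boundary term and completes the proof.

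The only non-routine point — and the main obstacle — is the boundary term: one must keep careful track of the horizontal/vertical splitting on $TN$ to justify $\widetilde{[\widehat W,\widehat V]}=\widetilde{[W,V]}$, and fix the extension convention for $V,W$ so that $\nabla_{\widehat W}\widetilde V|_{\dot\gamma(0)}$ is the quantity intended in \eqref{eq:index_form}; once that is done, the symmetry is exactly the symmetry of the (Finslerian) second fundamental form of $S$, encoded as $g_\nu([W,V],\nu)=0$. An alternative, more synthetic route is to invoke the osculating Riemannian metric $g_W$ constructed in the proof of Lemma~\ref{Le:char_of_trans_jacobi}: along $\gamma$ one has $D_t=D_t^W$, the flag curvature of $F$ in the flag pole $\dot\gamma$ agrees with the corresponding sectional curvature of $g_W$, and the normal exponential maps coincide, so $I$ coincides with the classical Riemannian index form of $g_W$ for variations with one endpoint on $S$, which is a symmetric bilinear form by \cite{Chavel}.
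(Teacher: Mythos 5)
Your proof is correct, and its skeleton matches the paper's: bilinearity is routine, the $g_{\dot\gamma}(D_tV,D_tW)$ term is symmetric because $g$ is, the curvature term is symmetric because $\textbf{R}_{\dot\gamma}$ is self-adjoint with respect to $g_{\dot\gamma}$, and the boundary term requires a separate argument. The paper also cites \cite[Section 8.1]{shen2001lectures} for $g_y(\textbf{R}_y(v),w)=g_y(v,\textbf{R}_y(w))$. Where you genuinely diverge is the boundary term: the paper dispatches it by invoking the metric-compatibility identity \eqref{eq:Chern_prop_2} together with the symmetry of the Finslerian second fundamental form, citing \cite[Section 14.4]{shen2001lectures}, whereas you re-derive that symmetry from torsion-freeness \eqref{eq:Chern_prop_1}, the fact that $[\widehat W,\widehat V]$ fails to be horizontal only by a vertical term (so $\widetilde{[\widehat W,\widehat V]}=\widetilde{[W,V]}$), and the tangency of $[W,V]$ to $S$ together with the $g_\nu$-orthogonality of $\nu$ to $T_{z_0}S$. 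In effect you prove the symmetry of the second fundamental form rather than cite it; this is a little longer but more self-contained, and it buys two things the paper leaves implicit: a clear extension convention for $V,W$ near $z_0$ (tangent to $S$ along $S$) so that $\nabla_{\widehat W}\widetilde V|_{\dot\gamma(0)}$ is well defined, and a check that the boundary term does not depend on the chosen extension. Your closing alternative — reducing the whole lemma to the classical Riemannian index form of the osculating metric $g_W$ along $\gamma$ — is also sound and would be shorter still, though it is not the route the paper takes here.
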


\begin{proof}
Clearly, $I$ is bilinear. It is proven in \cite[Section 8.1]{shen2001lectures} that for all $x\in N$ and $y,v,w \in T_xN$, it holds that
\begin{equation}
g_{y}(\textbf{R}_{y}(v),w)=g_{y}(v,\textbf{R}_{y}(w)) .
\end{equation}
Since $V, W$ are normal to $\dot{\gamma}$, the equation
\begin{equation}
 g(\nabla _{\widehat W}\widetilde V\bigg|_{\dot \gamma(0)},\widetilde\nu )
 =g(\nabla _{\widehat V}\widetilde W \bigg|_{\dot \gamma(0)},\widetilde\nu)
\end{equation}
follows from~\eqref{eq:Chern_prop_2} and the symmetry of the second fundamental form (see \cite[Section 14.4]{shen2001lectures}).
\end{proof}

\begin{Lemma}
\label{Le:Tangential_variations}
Assume  that $\gamma$ is not self-intersecting on $[0,h]$. We let $ V\in \mathcal{V}$. There exists $\delta>0$ and a variation $\Gamma\colon(\delta,\delta) \times [0,h]\to N$  of $\gamma$ whose variation field $\frac{\p}{\p s}\Gamma(s,t)|_{s=0}$ is $V(t)$ and $\Gamma(s,0)$ is a smooth curve on $S$. Moreover if $t_1, \ldots, t_k \in [0,h]$ are the points where $ V$ is not smooth then  $\Gamma\colon (\delta,\delta) \times (t_i,t_{i+1}) \to N$ smooth. 
\end{Lemma}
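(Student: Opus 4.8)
The plan is to build $\Gamma$ from the geodesic spray of an auxiliary metric and then to correct it inside a small $t$-collar of the starting point so that the initial curve lands on $S$. I would fix any smooth Riemannian metric $\hat g$ on the closed manifold $N$, with its (smooth, globally defined) exponential map $\exp$, and --- using only that $V(0)\in T_{z_0}S$, which is the part of~\eqref{eq:Transverse_J_fields} that is relevant here --- choose a smooth curve $\sigma\colon(-\delta,\delta)\to S$ with $\sigma(0)=z_0:=\gamma(0)$ and $\dot\sigma(0)=V(0)$. The naive candidate $\Theta(s,t):=\exp_{\gamma(t)}(sV(t))$ already satisfies $\Theta(0,t)=\gamma(t)$ and $\partial_s\Theta(s,t)|_{s=0}=V(t)$, and it is smooth on each slab $(-\delta,\delta)\times(t_i,t_{i+1})$ exactly because $V$ is smooth there; its only defect is that $\Theta(s,0)=\exp_{z_0}(sV(0))$ need not stay on $S$.

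To repair this I would pick $a>0$ so small that $\gamma([0,a])$ lies in the domain of a coordinate chart $U$ of $N$, and a smooth cut-off $\rho\colon[0,h]\to[0,1]$ with $\rho\equiv1$ on $[0,a/3]$ and $\rho\equiv0$ on $[2a/3,h]$. Then, computing in the coordinates of $U$ for $t\le 2a/3$, I would set
\[
\Gamma(s,t):=\Theta(s,t)+\rho(t)\bigl(\sigma(s)-\Theta(s,0)\bigr)\quad\text{for }t\in[0,2a/3],
\]
and $\Gamma(s,t):=\Theta(s,t)$ for $t\in[2a/3,h]$; the two formulas agree on the overlap $[2a/3,a]$ because $\rho$ vanishes there, so $\Gamma$ is a well-defined map into $N$ as soon as $\delta$ is chosen small enough that every quantity entering the chart formula stays inside $U$. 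Since $\Theta(s,t)\to\gamma(t)$ and $\sigma(s)-\Theta(s,0)\to0$ uniformly in $t\in[0,2a/3]$ as $s\to0$, we get $\Gamma(s,t)\to\gamma(t)$ uniformly, so such a $\delta$ exists; arranging this uniform containment in the chart is the only genuinely technical point.

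The remaining verifications are immediate from the formula. Putting $s=0$ gives $\Gamma(0,t)=\Theta(0,t)=\gamma(t)$, so $\Gamma$ is a variation of $\gamma$. Differentiating in $s$ at $s=0$, the correction contributes $\rho(t)\bigl(\dot\sigma(0)-\partial_s\Theta(s,0)|_{s=0}\bigr)=\rho(t)\bigl(V(0)-V(0)\bigr)=0$, so the variation field is $\partial_s\Gamma(s,t)|_{s=0}=V(t)$ --- this is precisely why the correction subtracts $\Theta(s,0)$ rather than a constant. Since $\rho(0)=1$, we have $\Gamma(s,0)=\sigma(s)$, a smooth curve on $S$. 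Finally $\Gamma$ is smooth on $(-\delta,\delta)\times(t_i,t_{i+1})$ for every $i$ (with the convention $t_0:=0$, $t_{k+1}:=h$): on such a slab $\Theta$, $\rho$ and $\sigma$ are all smooth, and on the part where $\rho\equiv0$ one has $\Gamma=\Theta$ anyway. I note that the hypothesis that $\gamma$ is not self-intersecting on $[0,h]$ is not actually invoked above --- only a chart around the short subarc $\gamma([0,a])$ is used --- but it guarantees that $\gamma([0,h])$ is an embedded arc with a tubular neighbourhood, along which one could equally run a global version of this construction.
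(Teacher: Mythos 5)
Your proof is correct, and it takes a genuinely different route from the paper's. The paper first builds, in a tubular neighbourhood of $\gamma([0,h])$, an auxiliary Riemannian metric $\widetilde g$ (using Fermi coordinates of $S$ relative to the local Riemannian metric $g_W$, where $W$ extends $\dot\gamma$) for which $S$ is \emph{totally geodesic} and $\gamma$ is a $\widetilde g$-normal geodesic; then $\Gamma(s,t)=\exp_{\widetilde g}(\gamma(t),sV(t))$ automatically lands $\Gamma(s,0)=\exp_{\widetilde g}(z_0,sV(0))$ on $S$ because $V(0)\in T_{z_0}S$ and $S$ is geodesic for $\widetilde g$. You instead take an arbitrary Riemannian metric on the closed extension $N$, accept that the naive spray $\Theta(s,t)=\exp_{\gamma(t)}(sV(t))$ has the right variation field but misses $S$ at $t=0$, and repair it by adding a cut-off correction $\rho(t)\bigl(\sigma(s)-\Theta(s,0)\bigr)$ in a chart near $z_0$. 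The correction vanishes to first order in $s$ at $s=0$ precisely because $\dot\sigma(0)=V(0)=\partial_s\Theta(s,0)|_{s=0}$, so it leaves the variation field intact while forcing $\Gamma(s,0)=\sigma(s)\in S$; smoothness on each slab $(t_i,t_{i+1})$ and the existence of a small enough $\delta$ for chart containment follow as you say. What each approach buys: the paper's construction is coordinate-free and yields a variation whose $t$-curves are geodesics of $\widetilde g$ (convenient if one wants to reuse the metric), at the cost of invoking the non-self-intersection hypothesis to build the tubular neighbourhood; your construction is more elementary and more local --- you only need a chart near $z_0$, and as you observe, the non-self-intersection hypothesis is not actually used --- at the cost of working in coordinates and of producing a variation with no special structure beyond what the lemma requires. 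Both are valid; yours is arguably the minimal argument for exactly the stated conclusion.
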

\begin{proof}
\medskip
We let $W$ be a smooth vector field that is an extension of $\dot{\gamma}(t)$ in a neighborhood of $\gamma([0,h])$. Using the Fermi coordinates of $S$, with respect to the local Riemannian metric $g_W$,
we can construct a Riemannian metric $\widetilde g$ in some neighborhood of $\gamma([0,h])$ such that $S$ is a geodesic submanifold of $\widetilde g$ and $\gamma$ is a geodesic of $\widetilde g$ that is $\widetilde g$-normal to $S$. Then we use the following variation to prove the claim of this lemma.

\medskip
We let $\delta>0$ and define a variation of $\gamma$ with
\begin{equation}
\label{eq:Variation_of_V}
\Gamma(s,t)=\exp_{\widetilde g}(\gamma(t),sV(t));\: t\in [0,h],  s\in (-\delta,\delta),
\end{equation}
where $\exp_{\widetilde g}$ is the exponential map of metric tensor $\widetilde g$. Since $S$ is a geodesic submanifold with respect to $\widetilde g$ we have that 
\begin{equation}
\Gamma(s,0)=\exp_{\widetilde g}(\gamma(0),sV(0)) \in S,  \quad  s\in (-\delta,\delta).
\end{equation}
Moreover
\begin{equation}
\frac{\p}{\p s} \Gamma(s,t)\bigg|_{s=0}=D((\exp_{\widetilde g})_{\gamma(t)})\bigg|_0V(t)=V(t).
\end{equation}
The claim is proven.
\end{proof}

For a given vector field $V \in \mathcal{V}_0$ we call the variation of  $\gamma(t)$ given by $\eqref{eq:Variation_of_V}$ the variation related to $V$. 

\medskip
\begin{Definition}
We say that $\gamma(h)$ is a focal point of $S$ if the set $\mathcal{J}_0$ contains a non-zero Jacobi field.
\end{Definition}

\begin{Lemma}
\label{Le:Char_of_trans_Jacobi}
The point $\gamma(h)$ is a focal point of $S$ if and only if $D\exp^\perp$ is singular at $(z_0,h)$.
\end{Lemma}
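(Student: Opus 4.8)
The plan is to deduce Lemma~\ref{Le:Char_of_trans_Jacobi} directly from Lemma~\ref{Le:char_of_trans_jacobi}, which identifies the transverse Jacobi fields along $\gamma$ with the infinitesimal variations $D\exp^\perp|_{(z_0,t)}t\eta$, $\eta\in T_{z_0}S$. By the definition of a focal point, $\gamma(h)$ is a focal point of $S$ if and only if $\mathcal{J}_0\neq\{0\}$, i.e. there is some $\eta\in T_{z_0}S\setminus\{0\}$ for which the transverse Jacobi field $J_\eta$ given by~\eqref{eq:char_of_trans_jacobi} satisfies $J_\eta(h)=0$. Since $h\neq0$, formula~\eqref{eq:char_of_trans_jacobi} shows that $J_\eta(h)=0$ is equivalent to $D\exp^\perp|_{(z_0,h)}(\eta,0)=0$, where we use the splitting $T_{(z_0,h)}(S\times\R)=T_{z_0}S\oplus\R\partial_t$. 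Thus it only remains to show that $D\exp^\perp|_{(z_0,h)}$ is singular if and only if its restriction to $T_{z_0}S\oplus\{0\}$ has a nontrivial kernel.

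For this I would exploit that $D\exp^\perp|_{(z_0,h)}$ is block diagonal with respect to the decompositions $T_{(z_0,h)}(S\times\R)=T_{z_0}S\oplus\R\partial_t$ and $T_{\gamma(h)}N=\dot\gamma(h)^{\perp}\oplus\R\dot\gamma(h)$, where $\perp$ is taken with respect to the Riemannian inner product $g_{\dot\gamma(h)}$. Indeed $D\exp^\perp|_{(z_0,h)}\partial_t=\frac{\p}{\p t}\exp^\perp(z_0,t)|_{t=h}=\dot\gamma(h)$, and since $\gamma$ is a unit speed normal geodesic we have $g_{\dot\gamma(h)}(\dot\gamma(h),\dot\gamma(h))=F(\dot\gamma(h))^2=1\neq0$, so $T_{\gamma(h)}N=\dot\gamma(h)^{\perp}\oplus\R\dot\gamma(h)$ really is a direct sum and the block $\R\partial_t\to\R\dot\gamma(h)$ is an isomorphism. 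On the other hand, for $\eta\in T_{z_0}S$ the vector $D\exp^\perp|_{(z_0,h)}(\eta,0)$ is a nonzero multiple of the value $J_\eta(h)$ of a transverse Jacobi field, which is $g_{\dot\gamma}$-normal by the definition of $\mathcal{J}$ and~\eqref{eq:Transverse_J_fields}; hence $D\exp^\perp|_{(z_0,h)}(T_{z_0}S\oplus\{0\})\subset\dot\gamma(h)^{\perp}$. Consequently the determinant of $D\exp^\perp|_{(z_0,h)}$ equals, up to a nonzero factor, the determinant of the block $T_{z_0}S\to\dot\gamma(h)^{\perp}$, $\eta\mapsto D\exp^\perp|_{(z_0,h)}(\eta,0)$, and the two conditions in the lemma become equivalent.

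I do not anticipate a genuine obstacle; the one point requiring care is the normality of transverse Jacobi fields, that is, that $D\exp^\perp$ maps $T_{z_0}S$ into $\dot\gamma(h)^{\perp}$ rather than all of $T_{\gamma(h)}N$, but this is precisely the transversality condition~\eqref{eq:Transverse_J_fields} built into the definition of $\mathcal{J}$ and already established in Lemma~\ref{Le:char_of_trans_jacobi}. Alternatively, one can avoid the block-diagonal bookkeeping and argue directly: if $(v,a)$ lies in the kernel of $D\exp^\perp|_{(z_0,h)}$, then $D\exp^\perp|_{(z_0,h)}(v,0)=-a\dot\gamma(h)$, and this vector is simultaneously $g_{\dot\gamma(h)}$-orthogonal to $\dot\gamma(h)$ (being a nonzero multiple of $J_v(h)$ with $J_v$ normal) and parallel to $\dot\gamma(h)$, which forces $a=0$; hence $v\neq0$ and $J_v\in\mathcal{J}_0\setminus\{0\}$, so $\gamma(h)$ is a focal point.
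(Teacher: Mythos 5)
Your proof is correct and follows exactly the route the paper indicates (the paper's own proof is just the one-liner ``follows from Lemma~\ref{Le:char_of_trans_jacobi}''). You supply the detail the paper leaves implicit, namely that $D\exp^\perp|_{(z_0,h)}$ is block-triangular with respect to $T_{z_0}S\oplus\R\partial_t$ and $\dot\gamma(h)^{\perp_{g_{\dot\gamma(h)}}}\oplus\R\dot\gamma(h)$, with the $\partial_t$-block nondegenerate, so that degeneracy of the full differential is detected precisely on the $T_{z_0}S$ block where Lemma~\ref{Le:char_of_trans_jacobi} identifies it with the existence of a nonzero element of $\mathcal J_0$.
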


\begin{proof}
The claim follows from Lemma~\ref{Le:char_of_trans_jacobi}.
%
\end{proof}

We define the quantities $\tau_S(z_0)$ and $\tau_f(z_0)$ as
\begin{equation}
\label{eq:tau_S}
\tau_S(z_0)=\sup\{t>0:t=d_F(z_0,\gamma_{z_0,\nu}(t))=d_F(S,\gamma_{z_0,\nu}(t))\}
\end{equation}
and
\begin{equation}
\label{eq:tau_f}
\tau_f(z_0)=\inf\{t>0: \gamma(t) \hbox{ is a focal point to $S$}\}.
\end{equation}
The function $\tau_S$ above agrees with $\tau_{\p M}$ given in Definition~\ref{De:boundary_cut_points} when $S=\partial M$.
Our final goal is to show that $\tau_S(z_0) \leq \tau_f(z_0)$.
To check the inequality, we still have to state one auxiliary result:

\begin{Lemma}
\label{lma:410}
If $\tau_f(z_0)> h$, then Index form $I$ is positive definite on $\mathcal V_0$. If $\tau_f(z_0)= h$, then $I$ is positive semidefinite on $\mathcal V_0$ and $I(V,V)=0$ if and only if $V\in \mathcal{J}_0$.
\end{Lemma}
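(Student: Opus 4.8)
The plan is to run, in the present Finsler setting, the classical Riemannian argument that the focal index form is positive definite strictly before, and positive semidefinite exactly at, the first focal point (cf.\ \cite[Section~III.6]{Chavel}, \cite[Chapter~10]{do1992riemannian}). Fix a basis $J_1,\dots,J_{n-1}$ of the $(n-1)$-dimensional space $\mathcal J$ of transverse Jacobi fields along $\gamma|_{[0,h]}$, with $J_i(0)=\eta_i$ running through a basis of $T_{z_0}S$; by Lemma~\ref{Le:char_of_trans_jacobi}, $J_i(t)=D\exp^\perp|_{(z_0,t)}\,t\eta_i$. By Lemma~\ref{Le:Char_of_trans_Jacobi} and~\eqref{eq:tau_f}, the hypothesis $h\le\tau_f(z_0)$ forces $D\exp^\perp$ to be nonsingular at $(z_0,t)$ for every $t\in(0,h)$ --- and also at $t=h$ when $h<\tau_f(z_0)$ --- so $\{J_1(t),\dots,J_{n-1}(t)\}$ is a basis of the $g_{\dot\gamma(t)}$-orthogonal complement of $\dot\gamma(t)$ for those $t$. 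I would also record the Lagrange identity: $t\mapsto g_{\dot\gamma}(D_tJ_i,J_j)-g_{\dot\gamma}(D_tJ_j,J_i)$ is constant (differentiate and use the Jacobi equation together with the symmetry of $\textbf{R}_{\dot\gamma}$, \cite[Section~8.1]{shen2001lectures}) and vanishes, as one sees by evaluating at $t=0$ via the transversality condition~\eqref{eq:Transverse_J_fields} and the symmetry of the second fundamental form~\cite[Section~14.4]{shen2001lectures}; hence $g_{\dot\gamma}(D_tJ_i,J_j)=g_{\dot\gamma}(D_tJ_j,J_i)$ for all $i,j,t$. (Alternatively, as in the proof of Lemma~\ref{Le:char_of_trans_jacobi}, along a non-self-intersecting $\gamma$ one may replace $F$ by the osculating Riemannian metric $g_W$, under which $D_t$, $\textbf{R}_{\dot\gamma}$, the transverse Jacobi fields and the index form~\eqref{eq:index_form} are all unchanged, reducing the statement to the classical Riemannian focal-point lemma.)

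Given $V\in\mathcal V_0$, write $V(t)=\sum_i f_i(t)J_i(t)$ with $f_i$ piecewise smooth; this is legitimate since $\{J_i(t)\}$ spans the normal space on $(0,h)$ (and on $(0,h]$ when $h<\tau_f$), and the $f_i$ extend smoothly to $t=0$ because $\{J_i(0)\}$ is a basis of $T_{z_0}S\ni V(0)$. The transversality condition~\eqref{eq:Transverse_J_fields}, imposed on $V$ and on each $J_i$, forces $f_i'(0)=0$: it pins $D_tV(0)$ to $\sum_j f_j(0)D_tJ_j(0)$ through the shape operator of $S$, while expanding $D_tV(0)$ directly produces the extra summand $\sum_i f_i'(0)J_i(0)$, which must therefore vanish. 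The standard index-lemma manipulation now applies: expand $D_tV=\sum_i(f_i'J_i+f_iD_tJ_i)$, substitute the Jacobi equation, use the symmetry above, integrate by parts, and note that the resulting $t=0$ boundary term is cancelled exactly by the term $-g(\nabla_{\widehat V}\widetilde V|_{\dot\gamma(0)},\widetilde\nu)$ in~\eqref{eq:index_form} --- this cancellation is precisely what the Chern-connection compatibility~\eqref{eq:Chern_prop_2}, together with the normality $g_{\dot\gamma}(V,\dot\gamma)\equiv0$, provides. This yields
\begin{equation*}
I(V,V)=\int_0^h g_{\dot\gamma(t)}\!\Big(\textstyle\sum_i f_i'(t)J_i(t),\ \sum_j f_j'(t)J_j(t)\Big)\,dt\;+\;\sum_{i,j} f_i(h)f_j(h)\,g_{\dot\gamma(h)}\!\big(D_tJ_i(h),J_j(h)\big).
\end{equation*}
The boundary sum vanishes in both cases: if $h<\tau_f$ then $V(h)=0$ and the independence of the $J_i(h)$ give $f_i(h)=0$; if $h=\tau_f$ then $J^\ast:=\sum_i f_i(h)J_i$ is a transverse Jacobi field with $J^\ast(h)=V(h)=0$, i.e.\ $J^\ast\in\mathcal J_0$, so the sum equals $g_{\dot\gamma(h)}(D_tJ^\ast(h),J^\ast(h))=0$. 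Hence $I(V,V)=\int_0^h g_{\dot\gamma(t)}(Y(t),Y(t))\,dt\ge0$, where $Y:=\sum_i f_i'J_i$.

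Reading off the equality case finishes the proof. If $\tau_f(z_0)>h$: $I(V,V)=0$ forces $Y\equiv0$ on $[0,h]$, and since $\{J_i(t)\}$ is a basis on $(0,h]$ this gives $f_i'\equiv0$, hence $f_i\equiv f_i(h)=0$, hence $V\equiv0$; so $I$ is positive definite on $\mathcal V_0$. If $\tau_f(z_0)=h$: $I$ is positive semidefinite, and $I(V,V)=0$ forces $Y\equiv0$ on $(0,h)$, hence each $f_i$ is a constant $c_i$, hence $V=\sum_i c_iJ_i\in\mathcal J$ with $V(h)=0$, i.e.\ $V\in\mathcal J_0$; conversely, if $V\in\mathcal J_0\subset\mathcal V_0$ then its coefficients in $\{J_i\}$ are constant, so $Y\equiv0$ and $I(V,V)=0$. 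This is exactly what is needed to conclude $\tau_S(z_0)\le\tau_f(z_0)$ and thereby finish the proof of Lemma~\ref{Le:prop_boundary_cut}.

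I expect the bookkeeping at the two distinguished parameters to be the crux. At $t=0$ one has to verify in coordinates that the Chern-connection boundary term in~\eqref{eq:index_form} is exactly the one absorbed by the integration by parts; this is where Lemma~\ref{Le:Chern_prop_2}, the identity $\widetilde{D_tV}=\nabla_{\widehat{\dot\gamma}}\widetilde V$, and the symmetry of the second fundamental form must be combined just right, the Finsler subtlety being that $g$ is direction-dependent, so all these identities hold only along $\gamma$, that is, evaluated at $\dot\gamma(0)$. At $t=h$ in the borderline case $h=\tau_f$, the coefficients $f_i$ of $V=\sum f_iJ_i$ could a priori blow up as $t\to h$ since the $J_i(t)$ become linearly dependent there, so one must justify the boundary behaviour used above; the clean way is the remark that $\sum_i f_i(h)J_i$ lies in $\mathcal J_0$, but should that regularity be awkward to establish directly, one can instead derive the $h=\tau_f$ case from the $h<\tau_f$ case through a continuity/monotonicity argument for the index of $I$ as the right endpoint crosses $\tau_f$, combined with the first-variation characterization of $\ker I$.
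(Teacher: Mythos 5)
Your proposal is correct and follows the same route the paper intends: the paper's entire proof is a citation to Chavel~\cite[Theorem~II.5.4]{Chavel}, and what you have written out is precisely that classical index-lemma argument --- decompose $V\in\mathcal V_0$ in a basis of transverse Jacobi fields, establish the Lagrange symmetry $g_{\dot\gamma}(D_tJ_i,J_j)=g_{\dot\gamma}(D_tJ_j,J_i)$ using the symmetry of $\textbf{R}_{\dot\gamma}$ and of the second fundamental form, integrate by parts, and absorb the $t=0$ boundary term into the Chern-connection correction in the index form $I$ --- transplanted to the Finsler setting via the machinery set up in the preceding lemmas (the compatibility~\eqref{eq:Chern_prop_2}, the identity $\widetilde{D_tV}=\nabla_{\widehat{\dot\gamma}}\widetilde V$, and the transversality condition~\eqref{eq:Transverse_J_fields}). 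You correctly identify the one genuinely delicate point, namely the regularity of the coefficients $f_i$ as $t\to h$ in the borderline case $h=\tau_f(z_0)$ where the $J_i(h)$ become dependent, and both escape hatches you mention (the L'H\^{o}pital-type argument in Chavel/do Carmo, or a continuity-of-index argument letting the right endpoint cross $\tau_f$) are standard and legitimate. In short, your write-up supplies exactly the content the paper relegates to the reference, and I see no gap.
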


\begin{proof} The claim follows from standard properties of the index form $I(\,\cdotp,\,\cdotp)$, see the proof of  \cite[Theorem II.5.4]{Chavel} for details. 
\end{proof}

\begin{Lemma}
\label{lma:negative-index-form}
Suppose that $\tau_f(z_0)< h$. Then there exists $W \in \mathcal{V}_0$ such that
\begin{equation}
I(W,W)<0.
\end{equation}
Moreover,
\begin{equation}
\label{eq:bnd_cut_poits_ocur_bf_focal}
\tau_S(z_0) \leq \tau_f(z_0).
\end{equation}
\end{Lemma}

\begin{proof}
Denote $\tau_f(z_0):=t_0 <h$. Choose a non-zero $J\in \mathcal{J}$ that vanishes at $t_0$. Define
\begin{equation}
V(t)
=
\begin{cases}
J(t),& t \leq t_0
\\
0,& t \in [t_0,h).
\end{cases}
\end{equation}
By Lemma~\ref{lma:410} it holds that $I(V,V)=0$. Since $D_t J(t_0)\neq 0$ there exists a non-zero smooth vector field $X \in \mathcal{V}_0 $ on $\gamma$ that satisfies
\begin{equation}
\hbox{supp}X\subset (0,h) \hbox{ and } X(t_0)=-D_tJ(t_0).
\end{equation}
Therefore if $\epsilon >0$ is small enough
 $I(V+\epsilon X, V+\epsilon X)$ is negative.

\medskip

Finally, we prove~\eqref{eq:bnd_cut_poits_ocur_bf_focal}. We denote $W:=V+\epsilon X$. We let $\Gamma(s,t)$ be the variation of $\gamma(t)$ that is related to $W$. Since $\gamma$ is a geodesic we have
\begin{equation}
\frac{d}{ds}\mathcal{L}(\Gamma(s,\cdot))=0 \hbox{ and } \frac{d^2}{ds^2}\mathcal{L}(\Gamma(s,\cdot))=I(W,W)<0.
\end{equation}
Therefore, $\gamma$ cannot minimize the length from $S$ to $\gamma(h)$. Thus the inequality~\eqref{eq:bnd_cut_poits_ocur_bf_focal} is valid.
\end{proof}

Let us now summarize the proof of the remaining claim.

\begin{proof}[Proof of the final statement of Lemma~\ref{Le:prop_boundary_cut}]
The claim remaining to be proven is that for any $t\in[0,t_0)$ the normal exponential map $\exp^\perp$ is non-singular at $(z,t)$.
The time $t_0=\tau_{\partial M}(z)$ is the boundary cut distance at $z$.
We are only interested in the hypersurface $S=\partial M$, so our cut distance function to $S$ equals that of Definition~\ref{De:boundary_cut_points}.

By Lemma~\ref{Le:Char_of_trans_Jacobi} we need to show that the point $\exp^\perp(z,t)$ is not a focal point.
The closest focal point to $z$ is at distance $\tau_f(z)$ from the boundary, and so by Lemma~\ref{lma:negative-index-form} there are no focal points before the time $\tau_S(z)=\tau_{\partial M}(z)$.
Therefore there are no focal points before the boundary cut distance, as claimed.
\end{proof}


\subsection*{Acknowledgements}

MVdH was supported by the Simons Foundation under the MATH + X program, the National Science Foundation under grant DMS-1815143, and by the members of the Geo-Mathematical Imaging Group at Rice
University.
JI was supported by the Academy of Finland (decisions
295853, 332890, and 336254).
Much of the work was completed during JI's visits to Rice
University, and he is grateful for hospitality and support.
ML was supported by Academy of Finland (decisions 284715 and 303754).
TS was supported by the Simons Foundation under the MATH + X program.
Part of this work was carried out during TS's visit to University of Helsinki, and he is grateful for hospitality and support.
We thank Antti Kykkänen and Jonne Nyberg for discussions and the anonymous referees for their valuable comments.

\appendix

\section{Basics of compact Finsler manifolds}

\label{Se:Appendix1}

In this appendix, we summarize some basic theory of compact Finsler manifolds. This section is intended for the readers having background in imaging methods and elasticity. We
follow the notation of~\cite{shen2001lectures} and use it as a main
reference. The main goal is to prove that if $x \in
M^{int}$ and $z_x \in \p M$ is a closest boundary point to $x$, that
is the minimizer of $d_F(x,\cdot)|_{\p M}$ or $d_F(\cdot,x)|_{\p M}$,
then the distance minimizing curve from $x$ to $z_x$ or from $z_x$ to
$x$ respectfully is a geodesic that is perpendicular to the
boundary. Readers who are not familiar with Finsler geometry are
encouraged to read this section before embarking to the proof of
Theorem~\ref{Th:smooth} presented in Section~\ref{Se:proof}.

Most of the claims and the proofs given in this section  are modifications of similar theorems in Riemannian geometry. We refer to the classical material where the Riemannian version is presented. 
\medskip

We let $N$ be a $n$-dimensional, compact, connected smooth manifold without boundary. We reserve the notation $TN$ for the tangent bundle of $N$ and say that a function $F\colon TN \to [0, \infty)$ is a \textit{Finsler function} if
\begin{enumerate}
\item $F\colon TN\setminus \{0\} \to [0, \infty)$ is smooth
\item For each $x \in N$ the restriction $F\colon T_xN \to [0, \infty)$ is a \textit{Minkowski norm}.
\end{enumerate}

Recall that for a vector space $V$ a function $F\colon V \to [0, \infty)$ is called a Minkowski norm if the following hold
\begin{itemize}
\item $F\colon V\setminus \{0\} \to \R $ is smooth.
\item For every $y \in V$ and $s  >0$ it holds that $F(sy)=sF(y)$.
\item For every $y \in V\setminus \{0\}$ the function $g_y\colon V \times V \to \R$ is a symmetric positive definite bilinear form, where
\begin{equation}
\label{eq:Finsler_to_Riemannian}
g_y(v,w):=\frac{1}{2}\frac{\p}{\p s}\frac{\p}{\p t}\bigg[F^2(y+sv+tw)\bigg]\bigg|_{s=t=0}.
\end{equation}
\end{itemize}
We call the pair $(N,F)$ a Finsler manifold.  

\medskip
The length of a piecewise smooth curve $c\colon I\to N$, $I$ is an interval, is defined as 
\begin{equation}
\label{eq:lenght_of_curve}
\mathcal{L}(c):=\int_I F(\dot{c}(t)) dt.
\end{equation}
For every $x_1,x_2 \in N$ we define
$$
d_F(x_1,x_2):=\inf_{c\in C_{x_1,x_2}} \mathcal{L}(c),
$$ 
where $C_{x_1,x_2}$ is the collection of piecewise smooth curves starting at $x_1$ and ending at $x_2$. The function $d_F\colon N \times N \to [0,\infty)$ is a non-symmetric path metric related to $F$, meaning that for some $x_1,x_2\in N$ the distance $d_F(x_1,x_2)$ need not coincide with $d_F(x_2,x_1)$ (see \cite[Section 6.2]{bao2012introduction}). 

We note that for all $x_1,x_2\in N$ it holds that
\begin{equation}
\label{eq:dF_and_dtildef}
d_F(x_1,x_2)=d_{\stackrel{\leftarrow}{F}}(x_2,x_1),
\end{equation}
where $\stackrel{\leftarrow}{F}$ is the reversed Finsler function $\stackrel{\leftarrow}{F}(x,y)=F(x,-y)$.

\medskip
We use the notation $g_{ij}(x,y)$ for the component functions of the Hessian of $\frac{1}{2}F^2$ as in~\eqref{eq:Finsler_to_Riemannian}.  A $C^1$ curve $\gamma\colon I \to N$, with a constant speed $F(\dot{\gamma}(t))\equiv c \geq 0$, is a geodesic of Finsler manifold $(N,F)$ if $\gamma(t)$ solves the system of\textit{ geodesic equations}
\begin{equation}
\label{eq:geodesic_eq}
\ddot \gamma^i(t) + 2G^i(\dot{\gamma}(t))=0, \quad i \in \{1,\ldots, m\}.  
\end{equation}
Here, $G^i\colon TN \to \R$ is given in local coordinates $(x,y)$ by
\begin{equation}
\label{eq:geodesic_coef_2}
G^i(x,y)=\frac{1}{4}g^{il}(x,y)\bigg\{2\frac{\p g_{jl}(x,y)}{\p x^k}-\frac{\p g_{jk}(x,y)}{\p x^l}\bigg\}y^jy^k.
\end{equation}
Since $F^2(x,y)$ is positively homogeneous of degree two with respect to $y$ variables, it follows from~\eqref{eq:geodesic_coef_2} that $G^i$ is positively
homogeneous of degree two with respect to $y$, but not necessarily
quadratic in $y$. Therefore, the geodesic equation~\eqref{eq:geodesic_eq} is not preserved if the orientation of the curve $\gamma$ is reversed. 

%
%
%

We define a vector field $\textbf{G}$, by
\begin{equation}
\label{eq:geodesic_vector_field}
\textbf{G}(x,y):= y^i\frac{\p}{\p x^i}-2G^i(x,y)\frac{\p}{\p y^i}.
\end{equation}
A curve  $\gamma$ is a geodesic of $F$ if and only if $\gamma=\pi(c)$, where $c$ is an integral curve of $\textbf G$. Due to ODE theory for a given initial conditions $(x,y)\in TN$ there exists the unique solution $\gamma_{x,y}$ of~\eqref{eq:geodesic_eq}, defined on maximal interval containing $0$. Thus by defining $\textbf{G}$ locally with~\eqref{eq:geodesic_vector_field}, it extends to a global vector field on $TN$. We call $\textbf{G}$ the\textit{ geodesic vector field}.

\begin{Lemma}[{\cite[Section 5.4]{shen2001lectures}}]
\label{Le:geodesic_flow_has_const_speed}
Let $c$ be an integral curve of geodesic vector field $\textbf G$, then $F(c(t))$ is a constant.
\end{Lemma}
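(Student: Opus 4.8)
The plan is to verify that $F^2$ is a first integral of the geodesic vector field $\textbf{G}$ by a short computation in local coordinates; since $F\geq 0$, constancy of $F^2\circ c$ is equivalent to constancy of $F\circ c$. Write an integral curve of $\textbf{G}$ as $c(t)=(x(t),y(t))$. By~\eqref{eq:geodesic_vector_field} it satisfies $\dot{x}^i=y^i$ and $\dot{y}^i=-2G^i(x,y)$, so in particular $x(\cdot)$ solves the geodesic equations~\eqref{eq:geodesic_eq} with velocity $y(\cdot)$. The chain rule then gives
\begin{equation}
\frac{d}{dt}F^2(c(t))=\frac{\p F^2}{\p x^i}(x,y)\,y^i-2\,\frac{\p F^2}{\p y^i}(x,y)\,G^i(x,y),
\end{equation}
and it remains to show that the right-hand side vanishes identically on $TN\setminus\{0\}$.

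First I would record the two consequences of homogeneity that reduce everything to cubic monomials in $y$: since $F^2(x,\cdot)$ is positively homogeneous of degree two and $g_{ij}=\frac{1}{2}\p_{y^i}\p_{y^j}F^2$, Euler's identity yields $F^2=g_{jk}\,y^jy^k$ and $\p_{y^i}F^2=2g_{ij}\,y^j$. Differentiating the first of these with respect to $x^i$ at fixed $y$ shows that the first term above equals $S:=\frac{\p g_{jk}}{\p x^i}\,y^iy^jy^k$, a quantity invariant under any relabelling of its three $y$-indices. Next I would contract the explicit formula~\eqref{eq:geodesic_coef_2} for the spray coefficients with $2g_{mi}$ and cancel $g_{mi}g^{il}=\delta^l_m$ to obtain the standard identity $2g_{mi}G^i=\bigl(\frac{\p g_{jm}}{\p x^k}-\frac{1}{2}\frac{\p g_{jk}}{\p x^m}\bigr)y^jy^k$. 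Combining this with $\p_{y^i}F^2=2g_{ij}y^j$ gives $2\,\p_{y^i}F^2\,G^i=4\,y^m g_{mi}G^i=2\,\frac{\p g_{jm}}{\p x^k}y^my^jy^k-\frac{\p g_{jk}}{\p x^m}y^my^jy^k$; after renaming dummy indices both monomials on the right equal $S$, so this term is $2S-S=S$. Hence $\frac{d}{dt}F^2(c(t))=S-S=0$, and $F\circ c$ is constant.

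I do not expect a real obstacle here. The only point requiring a little care is the index bookkeeping when collapsing the three cubic terms to the common symmetric expression $S$, which is legitimate precisely because each is contracted against the totally symmetric tensor $y^iy^jy^k$; one must also remember that in $\p_{x^i}F^2=\p_{x^i}(g_{jk}y^jy^k)$ the directional arguments $y^j$ are held fixed. If one prefers an invariant formulation, the same conclusion follows from the fact that $\textbf{G}$ is the geodesic spray, which corresponds under the Legendre transform to the Hamiltonian vector field of $\frac{1}{2}(F^\ast)^2$ on $T^\ast N$, whose level sets it preserves; but the coordinate computation above is shorter and self-contained.
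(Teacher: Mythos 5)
Your computation is correct, and it is the standard verification of this fact: the homogeneity identities $F^2=g_{jk}y^jy^k$ and $\p_{y^i}F^2=2g_{ij}y^j$, combined with the contraction $2g_{mi}G^i$ of the spray coefficients, collapse both terms of $\frac{d}{dt}F^2(c(t))$ to the same symmetric cubic $S=\frac{\p g_{jk}}{\p x^i}y^iy^jy^k$, which cancels. The paper supplies no proof of its own here but simply cites \cite[Section 5.4]{shen2001lectures}, and your argument is essentially the one found there (with the Hamiltonian/energy-conservation viewpoint you mention being the usual coordinate-free alternative).
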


We use the notations $\phi_t$ for the geodesic flow of $F$ on $TN$ and $(x,v)$ for points in $SN$. By Lemma~\ref{Le:geodesic_flow_has_const_speed} we know that for $(x,v)\in SN$ and for any $t\in \R$ in the flow domain of $(x,v)$ it holds that $\phi_{t}(x,v)\in SN$. Since 
$SN$ is compact we have proven that $\phi$ on $SN$ is a global flow (see for instance \cite[Theorem 17.11]{Lee}), which means that the map
\begin{equation}
\phi\colon \R\times SN \to SN
\end{equation}
is well defined. Therefore, we can define the exponential mapping $\exp_x$, $x \in N$ by
\begin{equation}
\label{eq:exponential_map}
\exp_x(y):=\pi(\phi_1(x,y))=\gamma_{x,y}(1), \quad  y \in T_xN.
\end{equation} 
Moreover in \cite[Section 11.4]{shen2001lectures}, it is shown that for any points $x_1,x_2 \in N$ there exists a globally minimizing geodesic from $x_1$ to $x_2$. 

In the following, we relate the smoothness of a distance function to distance minimizing property of geodesics. This is done via the cut distance function $\tau\colon {SN} \to \R$, which is defined by
\begin{equation}
\tau(x,v)=\sup \{t>0:d_F(x,{\gamma}_{x,v}(t))=t\}.
\end{equation}
In the next lemma, we collect properties of the cut distance function. 

\begin{Lemma}[{\cite[Chapter 12]{shen2001lectures}  or \cite[Chapter 8]{bao2012introduction}}]
\label{Le:cut_dist_func}
Let $(x, v) \in \; {SN}$ and $t_0 =\tau(x,v)$. 
At least one of the following holds:
\begin{enumerate}
\item The exponential map $\exp_x$, of $F$, is singular at $t_0v$.
\item There exists $\eta \in \;S_xN, \: \eta \neq v$ such that $\exp_x( t_0v)= {\exp_x}(t_0\eta)$.
\end{enumerate}
Moreover for any $t \in [0,t_0)$ the map ${\exp_x}$ is non-singular at $tv$. Also the map $\tau\colon {SN}\to \R$ is continuous.
\end{Lemma}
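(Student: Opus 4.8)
The plan is to carry out the classical cut-locus argument adapted to the non-symmetric Finsler metric, relying on the two facts already at hand: every ordered pair of points of the compact manifold $N$ is joined by a minimizing geodesic (\cite[Section 11.4]{shen2001lectures}) and the geodesic flow $\phi$ is a global flow on the compact space $SN$. First I would note that $\tau$ is finite and bounded away from $0$: with $D:=\max_{p,q\in N}d_F(p,q)<\infty$ one has $d_F(x,\gamma_{x,v}(t))\le D<t$ for $t>D$, so $t_0\le D$; and compactness gives a uniform $r_0>0$ on which $\exp$ is a diffeomorphism near the zero section, so $\tau\ge r_0$. Since sub-segments of minimizers are minimizers, the set $\{s>0:d_F(x,\gamma_{x,v}(s))=s\}$ is an interval with left endpoint $0$, and continuity of $d_F$ and of $s\mapsto\gamma_{x,v}(s)$ makes it the closed interval $[0,t_0]$; in particular $d_F(x,\gamma_{x,v}(t_0))=t_0$ and $\gamma_{x,v}|_{[0,t]}$ is minimizing for all $t\le t_0$. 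This last fact already disposes of the ``non-singular below $t_0$'' claim: if $\exp_x$ were singular at $sv$ for some $0<s<t_0$, the Finslerian Jacobi/index-form theory (\cite[Chapter 8]{bao2012introduction}, \cite[Chapter 12]{shen2001lectures}; the point analogue of the submanifold estimates of Section~\ref{Se:Appendix2}) produces a nontrivial Jacobi field along $\gamma_{x,v}|_{[0,s]}$ vanishing at both ends, so $\gamma_{x,v}|_{[0,s+\varepsilon]}$ is not minimizing for small $\varepsilon>0$, contradicting $s+\varepsilon<t_0$.

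For the dichotomy at $t_0$, set $p=\gamma_{x,v}(t_0)$ and assume $\exp_x$ is non-singular at $t_0v$; I would construct a distinct second unit vector $\eta$. Take $t_j\searrow t_0$ with $t_j>t_0$; then $\ell_j:=d_F(x,\gamma_{x,v}(t_j))<t_j$, and a minimizing geodesic $\sigma_j$ from $x$ to $\gamma_{x,v}(t_j)$ has initial direction $w_j\in S_xN$. Passing to a subsequence $w_j\to\eta\in S_xN$, continuity of $d_F$ gives $\ell_j\to t_0$, and continuous dependence of geodesics on initial data (using globality of $\phi$) gives $\gamma_{x,\eta}|_{[0,t_0]}$ as the uniform limit of the $\sigma_j$, of length $t_0=d_F(x,p)$; hence $\exp_x(t_0\eta)=p$ and $\gamma_{x,\eta}|_{[0,t_0]}$ is minimizing. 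To see $\eta\neq v$, observe that $(x',y')\mapsto(x',\exp_{x'}(y'))$ is a local diffeomorphism near $(x,t_0v)$ (block-triangular differential with invertible diagonal blocks, by non-singularity of $\exp_x$ at $t_0v$); since $\exp_x(\ell_jw_j)=\gamma_{x,v}(t_j)=\exp_x(t_jv)$ with $\ell_jw_j\to t_0v$ and $t_jv\to t_0v$, local injectivity would force $\ell_j=t_j$ for large $j$, contradicting $\ell_j<t_j$. This establishes conclusion (1) or (2).

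For continuity of $\tau$ on $SN$ I would prove upper and lower semicontinuity. For the upper bound: if $(x_j,v_j)\to(x,v)$ in $SN$ and $t_j:=\tau(x_j,v_j)\to T$ along a subsequence, then $d_F(x_j,\gamma_{x_j,v_j}(t_j))=t_j$ (each $\gamma_{x_j,v_j}|_{[0,t_j]}$ being minimizing), so in the limit $d_F(x,\gamma_{x,v}(T))=T$ and thus $T\le\tau(x,v)$. For the lower bound, suppose instead $t_j=\tau(x_j,v_j)\to T<t_0=\tau(x,v)$. Apply the dichotomy at each $(x_j,v_j)$. If $\exp_{x_j}$ is singular at $t_jv_j$ infinitely often, then, the singular set of the smooth map $(x',y')\mapsto\exp_{x'}(y')$ being closed, $\exp_x$ is singular at $Tv$, contradicting the first paragraph since $r_0\le T<t_0$. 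Otherwise there are $\eta_j\in S_{x_j}N$, $\eta_j\neq v_j$, with $\exp_{x_j}(t_j\eta_j)=\exp_{x_j}(t_jv_j)$ and $\gamma_{x_j,\eta_j}|_{[0,t_j]}$ minimizing; after a subsequence $\eta_j\to\eta\in S_xN$, the local-diffeomorphism argument of the previous paragraph (valid at $(x,Tv)$, since $\exp_x$ is non-singular there by $T<t_0$) forces $\eta\neq v$, and in the limit one gets two distinct minimizing geodesics $\gamma_{x,v}|_{[0,T]}$, $\gamma_{x,\eta}|_{[0,T]}$ from $x$ to $q:=\gamma_{x,v}(T)$; a broken-geodesic shortening at $q$ then shows $\gamma_{x,v}$ does not minimize past $T$, i.e.\ $\tau(x,v)\le T<t_0$, a contradiction. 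Hence $\tau$ is continuous.

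The only genuine care needed is to respect the non-symmetry of $d_F$: all geodesics and shortening curves must be oriented away from $x$, continuous dependence must be invoked for the orientation-sensitive geodesic equation, and the ``two distinct minimizers from $x$ to $q$'' step must compare broken curves issued from $x$. None of this is a real obstacle, since $SN$ is compact and $\phi$ is a global flow so all limiting arguments go through, while the Jacobi-field and broken-geodesic shortening steps use only the standard second-variation inequality, identical in form in the Finsler setting.
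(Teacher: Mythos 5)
Your proof is correct and carries out the standard Klingenberg-type argument found in the very references the paper cites for this lemma (Shen, Chapter~12; Bao--Chern--Shen, Chapter~8) — the paper itself gives no proof, deferring entirely to those sources, so there is no in-paper argument to diverge from. The only small stylistic remark is that the ``to see $\eta\neq v$'' step is implicitly a proof by contradiction (assume $\eta=v$, so that $\ell_j w_j\to t_0 v$); spelling that out would make the exposition airtight, but the logic as written is sound.
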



In the next lemma, we consider the regularity of the function $d_F$.

\begin{Lemma}
\label{Le:smoothness_of_the_distance_function}
Let $(x_1,v_1) \in SN$, $0<t_1 < \tau(x_1,v_1)$ and $x_2=\gamma_{x_1,v_1}(t_1)$. Then there exists neighborhoods $U$ of $x_1$ and $V$ of $x_2$ respectively such that the distance function $d_F\colon U\times V \to \R$ is smooth.
\end{Lemma}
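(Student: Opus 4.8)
The plan is to express $d_F$ near $(x_1,x_2)$ as $F$ composed with a local smooth inverse of the exponential map, and then to check that this candidate really equals the distance, the latter being the substantive point. First I would introduce the smooth map
\begin{equation}
\mathcal E\colon TN\supset\mathcal O\to N\times N,\qquad \mathcal E(p,w)=(p,\exp_p w),
\end{equation}
defined on a neighborhood $\mathcal O$ of $(x_1,t_1v_1)$ in $TN$. In local coordinates $d\mathcal E_{(x_1,t_1v_1)}$ is block lower triangular with diagonal blocks $\mathrm{id}$ and $d(\exp_{x_1})_{t_1v_1}$. Since $t_1<\tau(x_1,v_1)$, Lemma~\ref{Le:cut_dist_func} guarantees that $\exp_{x_1}$ is non-singular at $t_1v_1$, hence $d\mathcal E_{(x_1,t_1v_1)}$ is invertible and, by the Inverse Function Theorem, $\mathcal E$ restricts to a diffeomorphism from a neighborhood $\mathcal O'\subset\mathcal O$ of $(x_1,t_1v_1)$ onto a neighborhood of $(x_1,x_2)$, which we may take to be a product $U\times V$. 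This yields a smooth map $U\times V\ni(p,q)\mapsto w(p,q)\in T_pN$ with $\exp_p(w(p,q))=q$ and $w(x_1,x_2)=t_1v_1\neq0$; shrinking $U\times V$ we may assume $w$ avoids the zero section, so that $\rho(p,q):=F(p,w(p,q))$ is smooth on $U\times V$. Being the length of the geodesic $t\mapsto\exp_p(tw(p,q))$ from $p$ to $q$, it satisfies $\rho\ge d_F$ on $U\times V$.

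It then remains to prove $\rho=d_F$ on a (possibly smaller) neighborhood of $(x_1,x_2)$, which is the heart of the matter. I would argue by contradiction: otherwise there are $(p_j,q_j)\to(x_1,x_2)$ with $d_F(p_j,q_j)<\rho(p_j,q_j)$. Pick unit-speed minimizing geodesics $c_j$ from $p_j$ to $q_j$ (they exist since $N$ is compact), so their lengths $\ell_j=d_F(p_j,q_j)$ tend to $d_F(x_1,x_2)=t_1$ by continuity of $d_F$. By compactness of $SN$ and continuous dependence of geodesics on initial data, a subsequence of the initial velocities $\dot c_j(0)$ converges to some $u\in S_{x_1}N$ with $\gamma_{x_1,u}|_{[0,t_1]}$ a minimizing geodesic from $x_1$ to $x_2$. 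Since $t_1<\tau(x_1,v_1)$, the segment $\gamma_{x_1,v_1}|_{[0,t_1]}$ is the unique such minimizer: a distinct minimizer, concatenated with $\gamma_{x_1,v_1}|_{[t_1,t_1+\varepsilon]}$ for a small $\varepsilon$ with $t_1+\varepsilon<\tau(x_1,v_1)$, would be a shortest curve with a corner, hence a non-smooth geodesic, which is impossible. Therefore $u=v_1$, so $\dot c_j(0)\to v_1$, and for large $j$ the point $(p_j,\ell_j\dot c_j(0))$ lies in $\mathcal O'$ with $\mathcal E(p_j,\ell_j\dot c_j(0))=(p_j,q_j)=\mathcal E(p_j,w(p_j,q_j))$; injectivity of $\mathcal E$ on $\mathcal O'$ forces $\ell_j\dot c_j(0)=w(p_j,q_j)$, whence $\rho(p_j,q_j)=F(p_j,w(p_j,q_j))=\ell_j=d_F(p_j,q_j)$, contradicting the choice of the sequence. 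Thus $d_F=\rho$ near $(x_1,x_2)$, and after shrinking $U$ and $V$ the distance function $d_F\colon U\times V\to\R$ is smooth.

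The construction of $\mathcal E$ and $\rho$ and the verification that $d\mathcal E$ is invertible are routine uses of the Inverse Function Theorem and of the properties of the geodesic flow already recorded in Appendix~\ref{Se:Appendix1}. The delicate step is the last limiting argument: extracting a convergent subsequence of minimizing geodesics and identifying its limit with $\gamma_{x_1,v_1}|_{[0,t_1]}$ via uniqueness of minimizers strictly before the cut distance. This is exactly where the hypothesis $t_1<\tau(x_1,v_1)$ is used, and it is the only genuinely nontrivial ingredient of the proof.
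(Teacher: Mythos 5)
Your proof is correct, but it takes a genuinely different route from the paper's.

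The paper invokes the \emph{continuity} of the cut-distance function $\tau$ (part of Lemma~\ref{Le:cut_dist_func}) up front: this gives a neighborhood $U'\times(t_1-\epsilon,t_1+\epsilon)$ of $((x_1,v_1),t_1)$ on which $t<\tau(x,v)$ throughout, so the Jacobian of $E\colon((x,v),t)\mapsto(x,\exp_x(tv))$ is invertible everywhere on this product set and, crucially, the identity $d_F(x,y)=F(x,\exp_x^{-1}y)$ holds \emph{automatically} on $U\times V$ by the very definition of $\tau$ (the radial geodesic of length $t<\tau(x,v)$ is minimizing). The whole argument then reduces to the Inverse Function Theorem and the formula $d_F = F\circ\exp^{-1}$, with no separate verification step. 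You, by contrast, use Lemma~\ref{Le:cut_dist_func} only for the pointwise fact that $\exp_{x_1}$ is non-singular at $t_1v_1$, set up the smooth candidate $\rho=F\circ w$ by the Inverse Function Theorem at a single point, observe the easy one-sided inequality $\rho\ge d_F$, and then carry out a compactness/contradiction argument — extracting a convergent subsequence of minimizers, identifying the limit with $\gamma_{x_1,v_1}|_{[0,t_1]}$ by uniqueness of minimizers strictly before the cut distance (the corner/concatenation argument), and using injectivity of $\mathcal E$ to force $\rho=d_F$ for large $j$. In effect you are re-deriving, in the specific situation at hand, the lower semicontinuity of $\tau$ that the paper cites wholesale. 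Your version is more self-contained and avoids quoting continuity of $\tau$ as a black box, at the cost of being longer; the paper's version is shorter and cleaner by delegating the hard work to Lemma~\ref{Le:cut_dist_func}. Both are valid; the delicate points you flagged (uniqueness of the minimizer from $x_1$ to $x_2$ for $t_1<\tau(x_1,v_1)$, and the injectivity step pinning $\ell_j\dot c_j(0)=w(p_j,q_j)$) are handled correctly.
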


\begin{proof}
Since the cut distance function $\tau$ is continuous, there exist a neighborhood $U'\subset SM$ of $(x_1,v_1)$ and $\epsilon>0$ such that for any $t \in (t_1-\epsilon,t_1+\epsilon)$ and $(x,v) \in U'$ holds $t<\tau(x,v)$.  

Consider the smooth function 
\begin{equation}
E\colon U' \times (t_1-\epsilon,t_1+\epsilon) \ni ((x,v),t) \mapsto (x,\exp_xtv) \in N \times N.
\end{equation}
Since for every $ ((x,v),t) \in U' \times (t_1-\epsilon,t_1+\epsilon)$ we have that the exponential map $\exp_x$ is not singular at $vt \in T_xN$, the Jacobian of $E$ is invertible in $U' \times (t_1-\epsilon,t_1+\epsilon)$. Thus the Inverse Function Theorem implies the existence of the neighborhood $U\times V \subset N\times N$ of $(x_1,x_2)$ such that $E$ is a diffeomorphism onto $U \times V$. Therefore the map
\begin{equation}
U \times V \ni (x,y) \mapsto \exp^{-1}_xy \in TN
\end{equation}
is smooth.

By the definition of the cut distance function and  \cite[Section 11.4]{shen2001lectures}, the following equation holds for any $(x,y) \in U \times V$,
\begin{equation}
d_F(x,y)=F(x,\exp_x^{-1}y).
\end{equation}
This implies the claim as $F$ is smooth outside the zero section.
\end{proof}

The duality map between the tangent bundle and the cotangent bundle is given by the \textit{Legendre transform} $\ell\colon TN\setminus\{0\}\to T^\ast N \setminus\{0\}$ which is defined by
\begin{equation}
\label{eq:Legendre}
\ell(x,y)=\ell_x(y):=g_y(y,\cdot) \in T^\ast_xN, \quad y \in T_xN.
\end{equation}
The Legendre transform is a diffeomorphism and for all $a>0$ and $(x,y)\in TN\setminus \{0\}$ we have
\begin{equation}
\label{eq>legendre_scale}
\ell(x,ay)=a\ell(x,y).
\end{equation} 
(see \cite[Section 3.1]{shen2001lectures}). The dual $F^\ast$ of the Finsler function $F$, which is given by
\begin{equation}
\label{eq:F_dual}
F^\ast(x,p):=\sup_{v\in S_xN}p(v), \quad (x,p) \in T^\ast  N,
\end{equation}
is a Finsler function on $T^\ast N$ and the Legendre transform $\ell_x$ satisfies
\begin{equation}
F(x,v)=F^\ast(x,\ell_x(v)).
\end{equation}

\medskip

We let $S \subset N$ be a smooth submanifold of co-dimension $1$. It is shown in \cite[ Section 2.3]{shen2001lectures} that for every $z \in S$ there exists precisely two unit vectors $\nu_1,\nu_2 \in S_zN$ such that
$$
T_zS=\{y \in T_zN: g_{\nu_i}(\nu_i,y)=0\}, \: i\in \{1,2\}.
$$
Vectors $\nu_1,\nu_2 \in S_pN$ are called the unit normals of $S$. Typically $\nu_1\neq -\nu_2$ unless~$F$ is reversible.

In the next lemma, we relate the Legendre transform of the velocity field of a distance minimizing geodesic to the differential of the distance function.

\begin{Lemma}
\label{Le:differential_of_dist_func}
Let $x_1 \in N$ and $x_2 \in N$ be such that $d_F(x_1,\cdot)$ is smooth at $x_2$.
Then 
\begin{equation}
\label{eq:differential_of_dist_func}
d(d_F(x_1,\cdot))\bigg|_{x_2}=g_{\dot{\gamma}_{x_1,v}(t)}(\dot{\gamma}_{x_1,v}(t),\cdot)\bigg|_{t=d_F(x_1,x_2)}\in T^\ast_{x_2}N,
\end{equation}
where $\gamma_{x_1,v}$ is the unique distance minimizing unit speed geodesic from $x_1$ to $x_2$.
\end{Lemma}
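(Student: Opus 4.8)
The plan is to compute the differential of $\rho:=d_F(x_1,\cdot)$ at $x_2$ by applying the first variation formula for arc length to a smooth family of minimizing geodesics running from $x_1$ to points near $x_2$. First I would dispose of the degenerate case: if $x_2=x_1$ then $\rho$ is not differentiable at $x_2$, so we may assume $L:=d_F(x_1,x_2)>0$ and write $\gamma:=\gamma_{x_1,v}\colon[0,L]\to N$, $v=\dot\gamma(0)$, for the unique unit-speed minimizing geodesic. Smoothness of $\rho$ at $x_2$ rules out $x_2$ being a cut point of $x_1$ (standard cut-locus theory, see \cite[Chapter 12]{shen2001lectures}; this is the converse direction of Lemma~\ref{Le:smoothness_of_the_distance_function}, cf.\ Lemma~\ref{Le:cut_dist_func}), hence $L<\tau(x_1,v)$. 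Consequently $\exp_{x_1}$ is a diffeomorphism from a neighborhood $W$ of $Lv$ in $T_{x_1}N$ onto a neighborhood $V$ of $x_2$ in $N$, and for all $y\in V$ one has $\rho(y)=F(x_1,u(y))$, where $u(y):=\exp_{x_1}^{-1}(y)$ is smooth in $y$ with $u(x_2)=Lv$; in particular $y\mapsto\bigl(t\mapsto\exp_{x_1}(t\,u(y))\bigr)$ is a smooth family of minimizing geodesics.

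Next, fix $w\in T_{x_2}N$ and choose a smooth curve $\sigma\colon(-\epsilon,\epsilon)\to V$ with $\sigma(0)=x_2$ and $\dot\sigma(0)=w$. Set $\Gamma(s,t):=\exp_{x_1}(t\,u(\sigma(s)))$ for $t\in[0,1]$. Then each $\Gamma(s,\cdot)$ is a constant-speed geodesic from $x_1$ to $\sigma(s)$ with $\mathcal{L}(\Gamma(s,\cdot))=F(x_1,u(\sigma(s)))=\rho(\sigma(s))$ and, by Lemma~\ref{Le:geodesic_flow_has_const_speed}, $F(\partial_t\Gamma(s,\cdot))\equiv\rho(\sigma(s))$. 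Its variation field vanishes at $t=0$ (since $\Gamma(s,0)\equiv x_1$), equals $w$ at $(s,t)=(0,1)$ (since $\Gamma(s,1)=\sigma(s)$), and $\Gamma(0,t)=\gamma(Lt)$ gives $\partial_t\Gamma(0,1)=L\,\dot\gamma(L)$ with $F(\partial_t\Gamma(0,\cdot))\equiv L$.

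Applying the first variation formula for the length functional (see e.g.\ \cite[Chapter 5]{shen2001lectures}) to the geodesic $\Gamma(0,\cdot)$, and using that the variation field vanishes at $t=0$, I obtain
\[
\frac{d}{ds}\mathcal{L}(\Gamma(s,\cdot))\Big|_{s=0}
=\frac{1}{F(\partial_t\Gamma(0,\cdot))}\,g_{\partial_t\Gamma(0,1)}\bigl(\partial_t\Gamma(0,1),\,w\bigr)
=\frac{1}{L}\,g_{L\dot\gamma(L)}\bigl(L\dot\gamma(L),\,w\bigr)
=g_{\dot\gamma(L)}\bigl(\dot\gamma(L),\,w\bigr),
\]
where the last equality uses that $g_y$ is invariant under positive rescaling of $y$ and is bilinear. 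Since $\mathcal{L}(\Gamma(s,\cdot))=\rho(\sigma(s))$, the left-hand side equals $d\rho|_{x_2}(w)$; as $w\in T_{x_2}N$ was arbitrary, $d\rho|_{x_2}=g_{\dot\gamma(L)}(\dot\gamma(L),\cdot)=\ell_{x_2}(\dot\gamma_{x_1,v}(L))$, which is precisely~\eqref{eq:differential_of_dist_func} with $t=L=d_F(x_1,x_2)$.

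The step I expect to be the crux is the first one: extracting from the bare hypothesis that $d_F(x_1,\cdot)$ is smooth at $x_2$ the conclusion that $x_2$ lies strictly before the cut point, so that the smooth family of minimizing geodesics $\Gamma$ is actually available. This is the genuinely Finslerian input and rests on the cut-locus theory recalled above (together with Lemmas~\ref{Le:cut_dist_func} and~\ref{Le:smoothness_of_the_distance_function}). The remainder — bookkeeping of the scalar factors $F(\partial_t\Gamma)$ coming from the non-unit-speed parametrization and the homogeneity identity $g_{ay}(ay,\cdot)=a\,g_y(y,\cdot)$ for $a>0$ — is routine once the first variation formula is quoted in the displayed form.
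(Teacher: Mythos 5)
Your proof is correct, but it takes a genuinely different route from the paper's. The paper argues via the geodesic sphere $S(x_1,t_0)=\exp_{x_1}\{w: F(w)=t_0\}$: it shows $S(x_1,t_0)$ is a regular level set of $d_F(x_1,\cdot)$ whose tangent space at $x_2$ is $\ker d(d_F(x_1,\cdot))|_{x_2}$, and then invokes Gauss' Lemma to identify that same tangent space with $\ker g_{\dot\gamma(t_0)}(\dot\gamma(t_0),\cdot)$; the two covectors then agree because they also both evaluate to $1$ on $\dot\gamma(t_0)$. You instead build a smooth variation of minimizing geodesics through $x_1$ with prescribed endpoint velocity $w$, apply the first variation of arc length, and read off $d\rho|_{x_2}(w)=g_{\dot\gamma(L)}(\dot\gamma(L),w)$ directly — effectively reproving Gauss' Lemma on the fly rather than quoting it. Both routes require the same initial reduction (smoothness of $\rho$ at $x_2$ forces $L<\tau(x_1,v)$ so that $\exp_{x_1}$ is a local diffeomorphism near $Lv$), which the paper uses tacitly in the line ``$d_F(x_1,\exp_{x_1}(tw))=F(tw)=t$\dots if $tw$ is close to $t_0v$''; you make it explicit. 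Your version is more self-contained and produces the full formula in a single computation, whereas the paper's is shorter given that Gauss' Lemma is already at hand. The scalar bookkeeping in your first variation step (dividing by the constant speed $F(\partial_t\Gamma)=L$ and using $0$-homogeneity of $g_y$ in $y$) is correct.
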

\begin{proof}
Denote $t_0=d_F(x_1,x_2)$ and 
\begin{equation}
S(x_1,t_0)=\exp_{x_1}\{w \in T_{x_1}N: F(w)=t_0\}.
\end{equation}
Recall that 
\begin{equation}
\label{eq:d_F=F}
d_F(x_1,\exp_{x_1}(tw))=F(tw)=t, \quad  t>0, \: w\in S_{x_1}N
\end{equation}
if $tw$ is close to $t_0 v$. We  use a shorthand notation $d_F$ for the function $d_F(x_1,\cdot)$. We take a $t$-derivative from the both sides of~\eqref{eq:d_F=F} to obtain
\begin{equation}
\label{eq:d_F=F_2}
d(d_F)\bigg|_{\exp_{x_1}(tw)}(D\exp_{x_1}|_{tw}w)=d(d_F)\bigg|_{\exp_{x_1}(tw)}(\dot{\gamma}_{x_1,w}(t))=1.
\end{equation}
Due to~\eqref{eq:d_F=F_2} the set $S(x_1,t_0)$ is a regular level set of $d_F$ near $x_2$, and moreover~\eqref{eq:d_F=F} implies
\begin{equation}
T_{x_2}S(x_1,t_0)=\ker d(d_F)\bigg|_{x_2}.
\end{equation}

Thus it suffices to prove that 
\begin{equation}
\ker g_{\dot{\gamma}_{x_1,v}(t_0)}(\dot{\gamma}_{x_1,v}(t_0),\cdot)=T_{x_2}S(x_1,t_0).
\end{equation}
Notice that for any $w \in T_{x_1}M,$ such that  $g_v(v,w)=0$ holds 
\begin{equation}
\label{eq:d_F=F_3}
0=g_{tv}(tv,tw)=\frac{1}{2}\frac{d}{ds}[F^2](t(v+sw))\bigg|_{s=0}=t_0 d(d_F)\bigg|_{\exp_{x_1}(tv)}(D\exp_{x_1}|_{tv}tw).
\end{equation}
Therefore, $d(d_F)|_{\exp_{x_1}(tv)}(D\exp_{x_1}|_{tv}tw)=0$ and $(D\exp_{x_1}|_{t_0v}t_0w) \in T_{x_2}S({x_1},t_0)$.  
Recall that $J(t):= D\exp_{x_1}|_{tv}tw$ is the unique Jacobi field with initial conditions $J(0)=0, \: D_tJ(0)=w$. 
By Gauss' Lemma \cite[ Lemma 11.2.1]{shen2001lectures} we have 
\begin{equation}
0=g_{v}(v,w)=g_{\dot{\gamma}_{{x_1},v}(t_0)}(\dot{\gamma}_{{x_1},v}(t_0),D\exp_{x_1}|_{t_0v}w)).
\end{equation}
In the above, we used the identity
\begin{equation}
g_{tv_1}(tv_1,tv_2)=t^2g_{v_1}(v_1,v_2), \quad t>0;\: v_1,v_2 \in T_xN.
\end{equation}
This implies that
\begin{equation}
\ker g_{\dot{\gamma}_{{x_1},v}(t_0)}(\dot{\gamma}_{{x_1},v}(t_0),\cdot)=\{D\exp_{x_1}|_{t_0v}w: g_v(v,w)=0\}= T_{x_2}S({x_1},t_0),
\end{equation}
since $\dim v^\perp =\dim T_{x_2}S({x_1},t_0)$ and $D\exp_{x_1}|_{t_0v}$ is not degenerate.
\end{proof}

\begin{Lemma}
\label{Le:normal_geo_is_mini}
Let $S \subset N$ be a smooth closed submanifold of co-dimension $1$. Let $x \in N$. A distance minimizing curve from $S$ to $x$ (from $x$ to $S$) is a geodesic  that is orthogonal to $S$ at the initial (terminal) point. 
%
%
\end{Lemma}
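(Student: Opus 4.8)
The plan is to run the classical first-variation argument, adapted to the Finsler setting. I will describe the case of a distance minimizing curve from $S$ to the point $x$; the case ``from $x$ to $S$'' is symmetric, and may also be obtained from it by applying the result to the reversed Finsler function $\stackrel{\leftarrow}{F}$ via the identity $d_F(x,z)=d_{\stackrel{\leftarrow}{F}}(z,x)$ of \eqref{eq:dF_and_dtildef}. First I would fix a distance minimizing curve $\gamma$ from $S$ to $x$. Since $N$ is compact and $S$ is closed, hence compact, the number $d_F(S,x):=\inf_{z\in S}d_F(z,x)$ is attained at some $z_0\in S$, and any minimizer from $S$ to $x$ starts at such a point. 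By the Finsler Hopf--Rinow theorem (\cite[Section 11.4]{shen2001lectures}) and the fact that sub-arcs of shortest curves are shortest (the argument used in the proof of Proposition~\ref{Pr:topology}, via \cite[Theorem 2.5.23]{burago2001course}), after unit-speed reparametrization $\gamma\colon[0,\ell]\to N$ is a geodesic with $\gamma(0)=z_0$, $\gamma(\ell)=x$ and $\ell=d_F(z_0,x)=d_F(S,x)$. If $\ell=0$ the statement is vacuous; otherwise $\dot\gamma(0)\in S_{z_0}N$, and recalling from \cite[Section 2.3]{shen2001lectures} that ``$\gamma$ is orthogonal to $S$ at $z_0$'' means exactly $g_{\dot\gamma(0)}(\dot\gamma(0),w)=0$ for all $w\in T_{z_0}S$, it suffices to establish this identity.

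Next I would fix $w\in T_{z_0}S$, pick a smooth curve $\sigma\colon(-\delta,\delta)\to S$ with $\sigma(0)=z_0$ and $\dot\sigma(0)=w$, and construct a smooth variation $\Gamma\colon(-\delta',\delta')\times[0,\ell]\to N$ with $\Gamma(0,\cdot)=\gamma$, $\Gamma(s,0)=\sigma(s)$, $\Gamma(s,\ell)=x$, whose variation field $V(t):=\partial_s\Gamma(s,t)|_{s=0}$ satisfies $V(0)=w$ and $V(\ell)=0$; such a $\Gamma$ exists by a routine construction, e.g. $\Gamma(s,t)=\exp^h_{\gamma(t)}\big((1-t/\ell)\,P_t\,(\exp^h_{z_0})^{-1}(\sigma(s))\big)$ for an auxiliary Riemannian metric $h$ with parallel transport $P_t$ along $\gamma$. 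Because each curve $\Gamma(s,\cdot)$ joins $\sigma(s)\in S$ to $x$, we have $\mathcal L(\Gamma(s,\cdot))\ge d_F(S,x)=\mathcal L(\gamma)=\mathcal L(\Gamma(0,\cdot))$, so $s=0$ is a minimum of $s\mapsto\mathcal L(\Gamma(s,\cdot))$ and its derivative there vanishes. On the other hand, the first variation formula of arc length (\cite[Chapter 5]{shen2001lectures}), applied to the \emph{geodesic} $\gamma$ so that the interior term containing $D_t\dot\gamma$ drops out, yields
\[
0=\frac{d}{ds}\mathcal L(\Gamma(s,\cdot))\Big|_{s=0}=g_{\dot\gamma(\ell)}\big(\dot\gamma(\ell),V(\ell)\big)-g_{\dot\gamma(0)}\big(\dot\gamma(0),V(0)\big)=-\,g_{\dot\gamma(0)}\big(\dot\gamma(0),w\big).
\]
Since $w\in T_{z_0}S$ was arbitrary, $\dot\gamma(0)$ is $g_{\dot\gamma(0)}$-orthogonal to $T_{z_0}S$, proving the claim. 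For a minimizer from $x$ to $S$ one repeats the argument with the free endpoint at $t=\ell$ (a variation with $V(0)=0$, $V(\ell)=w$), obtaining $g_{\dot\gamma(\ell)}(\dot\gamma(\ell),w)=0$.

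The step requiring the most care --- although it is standard --- is the reduction in the first paragraph: one needs the minimizer to be a genuine smooth geodesic, since this is precisely what kills the interior term in the first variation formula; this relies on the Finsler Hopf--Rinow statement together with the sub-arc principle. The remaining Finsler-specific subtlety is that the fundamental tensor $g_y$ depends on the reference vector $y$; this causes no trouble here because we only evaluate $g$ along $\gamma$ at its own velocity $\dot\gamma$. I note that one cannot instead shortcut the proof by differentiating the distance function $d_F(\cdot,x)$ at $z_0$ and invoking Lemma~\ref{Le:differential_of_dist_func}, since $z_0$ may be a cut point of $x$ for the reversed metric, so $d_F(\cdot,x)$ need not be differentiable at $z_0$; the variational argument avoids this issue entirely.
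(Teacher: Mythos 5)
Your proof is correct, but it takes a genuinely different route from the paper's. You invoke the first-variation formula of arc length together with the boundary condition $V(\ell)=0$, $V(0)=w\in T_{z_0}S$, and read off $g_{\dot\gamma(0)}(\dot\gamma(0),w)=0$ from the vanishing of the boundary term at the minimum $s=0$. The paper instead works with the \emph{differential of the distance function}: it notes that since $z_x$ minimizes $d_F(x,\cdot)|_S$, the pullback $\iota^*d(d_F(x,\cdot))$ vanishes, and then invokes Lemma~\ref{Le:differential_of_dist_func}, which identifies $d(d_F(x,\cdot))|_{z_x}$ with $g_{\dot\gamma(h)}(\dot\gamma(h),\cdot)$. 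Your approach sidesteps any differentiability issue at a stroke, and in that sense is more elementary and robust; the paper's approach is more in keeping with the overall theme of the article (recovering geometry from boundary distance functions and their differentials), but forces it to confront the case where $d_F(x,\cdot)$ fails to be smooth at $z_x$. You correctly flag this as the obstruction to the ``shortcut'' via Lemma~\ref{Le:differential_of_dist_func}; what you miss is that the paper \emph{does} take that route and patches the non-smooth case by shifting the base point: for $t\in(\epsilon,h)$ close enough to $h$, the function $d_F(\gamma(t),\cdot)$ is smooth at $z_x$ (since $z_x$ is then strictly inside the cut distance from $\gamma(t)$), $z_x$ is still the nearest point of $S$ to $\gamma(t)$, and the same geodesic arrives at $z_x$ with the same velocity $\dot\gamma(h)$, so the smooth case applied to $\gamma(t)$ yields the conclusion. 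Both arguments are valid; yours is self-contained and avoids this case split, while the paper's reuses the machinery already built for the rest of the argument.

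One small point worth double-checking in your write-up: in the Finsler first-variation formula the boundary term is usually written $g_{\dot\gamma}(\dot\gamma,V)\big|_0^\ell$ with the fundamental tensor evaluated along the reference vector $\dot\gamma$ (not $V$); you have it in the right form, and since $\gamma$ is a geodesic the integral term drops, so the computation stands. Also note that in the Finsler setting ``orthogonal to $S$'' genuinely depends on the reference vector, and the convention in \cite[Section 2.3]{shen2001lectures} is exactly the one you use ($g_{\nu}(\nu,w)=0$ for $w\in T_{z_0}S$ with $\nu$ the normal), so the conclusion is stated correctly.
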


\begin{proof}
Since $S$ is compact there exists a closest point $z_x\in S$ to $x$. We denote $h=d_F(x,z_x)$. Since $(N,F)$ is complete there exists a distance minimizing geodesic $\gamma$ from $x$ to~$z_x$.

We suppose first that $d_F(x,\cdot)$ is smooth at $z_x$. We denote $r(z)=d_F(x,z)$ for $z \in S$. Since $z_x$ is a minimal point of $r$ we have $d_S r(z_x)=0$. Here $d_S$ is the differential operator of smooth manifold $S$. Then $d_Sr=\iota ^\ast d(d_F(x,\cdot))$, where $\iota: S \hookrightarrow N$. Thus $d (d_F(x,\cdot))$ vanishes on $T_{z_x}S$. By~\eqref{eq:differential_of_dist_func} it holds that 
\begin{equation}
d(d_F(x,\cdot))|_{z_x}=g_{\dot \gamma(h)}(\dot \gamma(h),\cdot)\neq 0.
\end{equation}
Thus $\dot \gamma(h)$ is normal to $S$ at $z_x$. 


If $d_F(x,\cdot)$ is not smooth at $z_x$ there exists $\epsilon>0$ such that for any $t\in (\epsilon,h)$ $d_F(\gamma(t),\cdot)$ is smooth at $z_x$. By the first part of the proof it follows that $\dot \gamma(h)$ is perpendicular to~$S$. 

\medskip

Due to~\eqref{eq:dF_and_dtildef} the second claim for the reversed
distance function can be proven in the same way, upon replacing $F$ by~$\stackrel{\leftarrow}{F}$.
\end{proof}

\bibliographystyle{abbrv}
\bibliography{bibliography}

\def\cprime{$'$}

\end{document}